\newtheorem{thm}{Theorem}[section]
\newtheorem{lem}[thm]{Lemma}
\newtheorem{cor}[thm]{Corollary}
\newtheorem{prop}[thm]{Proposition}
\theoremstyle{definition}
\newtheorem{rmk}{Remark}
\newcommand{\err}{\mathrm{err}}
\newcommand{\SC}{\mathrm{SC}}
\newcommand{\SW}{\mathrm{SW}}
\newcommand{\wass}{\mathrm W}
\newcommand{\monge}{\mathrm M}
\newcommand{\cdist}{\mathrm{C}}
\newcommand{\mi}{\mathrm{i}}
\newcommand{\EE}{\mathbb{E}}
\newcommand{\R}{\mathbb{R}}
\newcommand{\F}{\mathcal{F}}
\newcommand{\Prob}{\mathbb{P}}
\newcommand{\what}{\widehat}
\newcommand{\wtilde}{\widetilde}
\newcommand{\calA}{\mathcal{A}}
\newcommand{\calR}{\mathcal{R}}
\newcommand{\calP}{\mathcal{P}}
\newcommand{\calQ}{\mathcal{Q}}
\newcommand{\calU}{\mathcal{U}}
\renewcommand{\S}{\mathbb{S}}
\newcommand{\calM}{\mathcal{M}}
\newcommand{\calT}{\mathcal{T}}
\newcommand{\BB}{\mathbb{B}}
\newcommand{\DD}{\mathbb{D}}
\newcommand{\D}{\mathrm{D}}
\newcommand{\V}{\mathcal{V}}
\newcommand{\sign}{\mathrm{sign}}
\newcommand{\AC}{\mathcal{A}}
\begin{document}
\title{
\textbf{On sliced Cram\'er metrics}
}
\author{\textbf{William Leeb} \\ \\
School of Mathematics \\
University of Minnesota, Twin Cities  \\
Minneapolis, MN}
\date{}
\maketitle

\begin{abstract}
This paper studies the family of sliced Cram\'er metrics,
quantifying their stability under
distortions of the input functions.
Our results bound the growth of
the sliced Cram\'er distance between
a function and its geometric deformation by
the product of the deformation's displacement size
and the function's mean mixed norm.
These results extend to sliced Cram\'er distances between tomographic projections.
In addition, we remark on the effect of convolution
on the sliced Cram\'er metrics.
We also analyze efficient
Fourier-based discretizations in 1D and 2D,
and prove that they are robust to heteroscedastic noise.
The results are illustrated by numerical experiments.

\end{abstract}

\section{Introduction}

This paper is concerned with properties of the
sliced $p$-Cram\'er metrics, a family of distances
that extend the classical univariate $p$-Cram\'er metrics
to functions in $\R^d$ via the ``slicing'' operation.
The Cram\'er and sliced Cram\'er metrics have arisen in a variety
of different contexts.
These include statistical testing
\cite{ramdas2017wasserstein, szekely2013energy, rizzo2016energy,
szekely2023energy, cramer1928composition}
and probability theory \cite{zolotarev1984probability};
machine learning applications,
where they have been proposed as
an alternative to Wasserstein and sliced Wasserstein distances
\cite{bellemare2017cramer, lheritier2022cramer, kolouri2020sliced};
and image processing \cite{shi2025fast}.
The sliced $2$-Cram\'er distance is also known as the energy distance
\cite{szekely2013energy, rizzo2016energy, szekely2023energy, peyre2019computational}.
Well-known special cases in one variable include the $1$-Cram\'er distance,
which coincides with the $1$-Wasserstein distance, and the $\infty$-Cram\'er
distance, which coincides with the Kolmogorov metric.

A fundamental question about any metric is
how stable it is under
distortions of its inputs,
such as geometric deformations or noise.
It appears that, in spite of the long-standing
interest in Cram\'er and sliced Cram\'er metrics,
there has been little work on
characterizing their stability.
In this paper, we address this topic
by describing stability properties of the sliced Cram\'er distances
in settings of practical interest. 
We provide bounds on sliced Cram\'er distances' growth
under geometric deformations of the inputs,
and compare and contrast these bounds
with those of Wasserstein and Lebesgue distances.
We examine sliced Cram\'er metrics' behavior under convolutions,
a setting which occurs frequently in signal and image processing applications.
We also bound the sampling error of Fourier-based discretizations in 1D and 2D,
and prove their robustness to heteroscedastic sub-Gaussian noise.

\subsection{Geometric deformations}
\label{sec:intro_deformations}

A natural question to ask about a metric
is how stable it is to deformations of its inputs.
For example, in image processing, one
may seek a metric that is insensitive to small
translations, rotations, changes of scale,
or perturbations in the parameters that generate the images
being compared.
To make this question more precise,
one must specify a class of deformations
and a reasonable measure of the ``size''
of a deformation.
There are many classes of deformations
of practical interest, and 
in this paper we consider the family
of push-forward deformations of a function: if
$\Phi$ is a $C^1$ bijection mapping into the domain of
a function $f$, it induces the deformation
$f_\Phi(x) = f(\Phi(x)) |\nabla \Phi(x) |$ of $f$.
Such deformations preserve $f$'s integral and $L^1$ norm.
Other works have considered metric stability under
different classes of deformations,
such as those that preserve
$f$'s $L^\infty$ norm \cite{mallat2012group, anden2014deep};
however, integral-preserving deformations
are more natural
for understanding sliced Cram\'er metrics, which are typically used to compare
probability measures.

Given this class of deformations, a natural definition
of a deformation $\Phi$'s size
is its maximum displacement,
$\max_{x} |x - \Phi(x)|$.
In this paper, we will consider a family
of measures of deformation size,
defined in Section \ref{sec:pushforwards},
which includes the maximum displacement as a special case.
Proposition \ref{prop:deformations}
in Section \ref{sec:pushforwards} relates these different measures of deformation size.

Having defined an appropriate measure of the size $\varepsilon(\Phi)$ of a deformation,
we can quantify a metric $\D$'s robustness to deformations
by the rate of growth of $\D(f,f_\Phi)$ as a function of $\varepsilon(\Phi)$.
Because of the triangle inequality, this growth rate
controls how ``smoothly'' the metric changes
under deformations.
More precisely,
if $\D$  satisfies a bound of the form
$\D(f,f_\Phi) \le C(f) \varepsilon(\Phi)$, then the triangle inequality implies
that for any functions $f$ and $g$,
\begin{align}
|\D(f,g) - \D(f_\Phi,g)| \le \D(f,f_\Phi) \le C(f) \varepsilon(\Phi).
\end{align}
This bound can be interpreted
(see Remarks \ref{rmk:deformation_distance} and \ref{rmk:smoothness})
as saying that
$\D(f_\Phi,g)$
is a ``smooth'' function of the deformation $\Phi$,
when $\Phi$ is close to the identity transformation.

Prior work on metrics' behavior under
deformations
have focused primarily on
Wasserstein-type distances \cite{leeb2016holder,rao2020wasserstein,shi2025fast}.
Specifically, \cite{rao2020wasserstein} and \cite{shi2025fast} show that
they are robust
to rigid deformations (translations and rotations)
when comparing 2D tomographic
projections of a 3D volume; these are useful
properties for comparing images in single particle cryogenic
electron microscopy (cryo-EM) \cite{singer2020computational, bendory2020single,
doerr2016single},
as well as other
scientific problems for which
the measurement modality only permits
observing projections of an object
\cite{deans2007radon, natterer1986mathematics,
helgason2010integral, herman2009fundamentals, singer2013two, coifman2008graph}.
In Section \ref{section:preliminaries}
we state quite general robustness
properties of Wasserstein and sliced Wasserstein distances,
which, though we have not seen them published
previously, follow straightforwardly from
prior work.

In this paper, we bound the
growth rate of the sliced Cram\'er metrics under general deformations,
and extend these bounds to the distances between tomographic projections.
The growth rates depend both on the aforementioned measures of
displacement size and on the function's
mean mixed norm,
which we define in Section \ref{sec:mean_mixed_norms}.
We also compare and contrast the bounds we prove
for sliced Cram\'er metrics to the behavior
of Wasserstein-type distances and Lebesgue distances.

\subsection{Convolutions}

In typical scientific applications,
each observation will be a convolution
of the underlying physical object with a filter 
that arises from the measurement apparatus.
It is therefore of interest to understand how a metric
changes when the inputs are each convolved
by a common function. Since convolution is a smoothing operation,
it is natural to  expect that the convolved functions will be closer
to each other (up to a scaling factor) than the clean functions. Indeed,
this is the case for $L^p$ distances (due to Young's
convolution inequality), and is also true for
Wasserstein and sliced Wasserstein distances
(see Theorem \ref{thm:wasserstein_convo}
and Corollary \ref{cor:sw_convo} below).
In this paper, we observe that the same
is also true for sliced Cram\'er distances. This fact
is essentially a rephrasing of a result from
the prior work \cite{zhang2023cramer},
where it is stated in terms of random variables
and specific to the case $p=2$.
Our bound is stated for all $p \ge 1$,
and is sharper when the convolution filter
has both positive and negative values,
which can occur in certain scientific imaging
applications like cryo-electron microscopy,
among others.

\subsection{Discretizations and additive noise}

Of course, for a metric to be of practical interest there
must exist computationally efficient algorithms
for computing it based on finite data.
In this work we study the convergence of Fourier-based discretizations
of the 1D Cram\'er metric and 2D sliced Cram\'er metric;
the latter is
based on the method
described in \cite{shi2025fast},
and may be computed rapidly via the non-uniform fast Fourier transform (NUFFT).
These approximations take as inputs samples of the functions
on an equispaced grid, as is standard in signal processing applications.
We provide bounds on the discretization error
when approximating certain classes of functions.

In addition to bounding
the approximation errors on noiseless samples,
we will also show that these discretizations are robust to additive,
heteroscedastic, sub-Gaussian noise:
that is, when applied to samples from a
signal-plus-noise model, the estimated distance converges to the
distance between the signals only, and filters out the noise.
This is a useful property for a metric to have, though
one that is by no means unique to the sliced Cram\'er metrics (there are many
methods for filtering out noise).
By contrast, many Wasserstein-type distances
are not a priori defined (without modification)
on noisy data, as they typically require their inputs
to be probability measures;
and Lebesgue distances do not possess such a robustness property,
and can be severely distorted by additive noise.

\subsection{Notation and conventions}

Throughout the paper, we will assume
familiarity with basic concepts of measure and integration,
e.g.\ at the level of \cite{folland1999real}
or \cite{heil2019introduction}, and
probability theory at the level of \cite{vershynin2025high}.
In this section we briefly review several definitions
and introduce the notational conventions that we will use.

\subsubsection{Lebesgue norms}

Throughout, we denote by $\|f\|_{L^p} = \left( \int_{\R^d} |f(x)|^p dx\right)^{1/p}$
the Lebesgue $p$-norm of a function $f: \R^d \to \R$ (with the obvious modification
when $p=\infty$), and denote the standard inner product as
\begin{math}
\langle f, g\rangle = \int_{\R^d} f(x) g(x) dx.
\end{math}
If $A \subset \R^d$, we will say that $f$ is in $L^p(A)$
if $f$ is defined on $\R^d$ with $\|f\|_{L^p} < \infty$ and $f(x) = 0$
for $x \notin A$.
We also define the $p$-norm $\|x\|_{p}$ for vectors $x$ in $\R^d$
as 
\begin{math}
\|x\|_{p} = \left(\sum_{j=1}^{d} |x_j|^p \right)^{1/p}
\end{math}
(again, with the obvious modification
when $p=\infty$).
When convenient, we will also denote the $2$-norm of a vector $x$ in $\R^d$
by $|x|$.
We also denote the inner product between two vectors $x$ and $y$ in $\R^d$ by
\begin{math}
\langle x,y\rangle = \sum_{j=1}^d x_j y_j.
\end{math}

\subsubsection{Absolute continuity}

For a given interval
$(a,b) \subset \R$,
we denote by $\AC_0 = \AC_0(a,b)$ the set of
absolutely continuous functions $G$ on $(a,b)$ satisfying
$G(b)=0$; note that such functions may be written in the form
\begin{math}
G(x) = -\int_{x}^{b} g(t) \, dt
\end{math}
where $g \in L^1$  and $g = G'$ almost everywhere.

\subsubsection{The Fourier transform}

We denote the Fourier transform
of a function $f : \R^d \to \R$ in $L^1$ by
\begin{math}
\what{f}(\xi) = \int_{\R^d} f(x) e^{-2 \pi \mi \langle x, \xi \rangle} d x.
\end{math}
With this convention, the Fourier inversion formula may be written as
\begin{math}
f(x) = \int_{\R^d} \what{f}(\xi) e^{2 \pi \mi \langle x, \xi \rangle} d \xi.
\end{math}

\subsubsection{$C^r$ functions}

Let $r > 0$, and write $r = m+s$ where $m$ is an integer and $0 \le s < 1$.
Let $f : \R^d \to R$ be compactly supported.
We say that $f$ is $C^r$
if all partial derivatives of $f$ of degree up to and including $m$ exist
and are continuous, and, if $s > 0$, the partial derivatives
of degree $m$ are $s$-H\"older; that is, there is a $C > 0$
such that for all $x \in \R^d$ and $i=1,\dots,d$,
\begin{align}
\sup_{h \ne 0} \frac{\partial^m f(x + h e_i) - \partial^m f(x)}{|h|^s} \le C,
\end{align}
where $\partial^m f$ denotes any partial derivative of degree $m$.
As is well-known, for such $f$, $|\what{f}(\xi)| = O(|\xi|^{-r})$.

\subsubsection{Tomographic projections and the Radon transform}

Let $\calU = \left(u^{(1)},\dots,u^{(r)}\right) \in \S^{d-1} \times \cdots \times \S^{d-1}$
(where $\S^{d-1} \subset \R^d$
is the $(d-1)$-dimensional unit sphere)
denote an ordered collection of $r$ orthonormal vectors in $\R^d$.
Let $u^{(r+1)},\dots,u^{(d)}$ denote $d-r$ orthonormal vectors that are
orthogonal to $u_1,\dots,u_r$.
We define the tomographic projection $\calP_\calU$
onto $\mathrm{span}\{u^{(1)},\dots,u^{(r)}\}$ by
\begin{align}
(\calP_\calU f)(t_1,\dots,t_r)
    = \int_{\R^{d-r}} f(t_1 u^{(1)} + \dots + t_r u^{(r)}
        + s_1 u^{(r+1)} + \dots + s_{d-r} u^{(d)}) \, ds.
\end{align}
When $r=1$, we denote the tomographic projection of $f$ onto
the span of a unit vector $u$ by $\calP_u f$.
Note that in this case, the \emph{Radon transform}
$\calR f : \R \times \S^{d-1}$ of the function $f$
is defined by $(\calR f)(t,u) = (\calP_u f)(t)$.
For more background on these transforms,
see, for example, the references \cite{natterer1986mathematics, helgason2010integral}.
A standard result that we will use is the \emph{Fourier slice theorem}:
\begin{math}
\what{(\calP_u f)}(\xi) = \what{f}(\xi u),
\end{math}
for any unit vector $u \in \R^d$ and real number $\xi$.

\subsubsection{Push-forwards}

If $\Omega \subset \R^d$ is
a (non-empty) open set,
$\mu$ is a finite, signed measure on $\Omega$,
and $\Psi : \Omega \to \R^d$ is a measurable function,
we denote by $\Psi_\sharp \mu$ the push-forward measure,
\begin{math}
(\Psi_\sharp \mu)(E) = \mu(\Psi^{-1}(E));
\end{math}
e.g.\ see \cite{peyre2019computational}.
Note that $(\Psi_\sharp \mu)(\Psi(\Omega)) = \mu(\Psi^{-1}(\Psi(\Omega))) = \mu(\Omega)$.
When $\mu$ is induced from a function $f$ supported on $\Omega$, i.e.\
$\mu(E) = \int_{E} f(x) dx$, and $\Psi$ is a diffeomorphism between $\Omega$ and $\Psi(\Omega)$
with inverse $\Phi = \Psi^{-1}$,
then $\Psi_\sharp \mu$ has density
$f(\Phi(x))|\det(\nabla \Phi (x))|$.
We will write $(\Psi_\sharp f)(x) = (\Phi^{-1}_\sharp f)(x)= f(\Phi(x))|\det(\nabla \Phi(x))|$,
or $f_\Phi(x) = f(\Phi(x))|\det(\nabla \Phi(x))|$ for short.

\subsubsection{Sub-Gaussian random variables}

If $Z$ is a sub-Gaussian random variable, we denote by
$\|Z\|_{\psi_2}$ its sub-Gaussian norm, defined by
\begin{align}
\|Z\|_{\psi_2} = \inf\{\sigma > 0 \,: \, \EE[\exp\{Z^2 / \sigma^2\}] \le 2\}.
\end{align}

\subsection{Outline of the remainder of the paper}

The remainder of the paper is structured as follows:

\begin{enumerate}

\item
Section \ref{section:preliminaries} contains key definitions,
background material, and several preliminary results.
Theorem \ref{thm:wasserstein_projections}
and Theorem \ref{thm:sw_deformations} establish
general robustnes properties of
Wasserstein and sliced Wasserstein distances,
respectively; these results appear to be new,
though they follow easily from prior literature.

\item
Section \ref{section:properties} states the main theorems on the robustness
of sliced Cram\'er distances to geometric deformations.
Theorem \ref{thm:main_deformations} and Corollary \ref{cor:main_projections}
give bounds under quite general conditions. Sharper bounds are proved
for several special cases of interest in Section \ref{section:sharper},
including rotations, translations, and dilations.
In addition, Theorem \ref{thm:convolutions}
describes the behavior of sliced Cram\'er metrics under convolutions.

\item
Section \ref{section:discrete} analyzes Fourier-based approximations
to the  Cram\'er distances and the 2D  sliced  Cram\'er distances,
the latter with respect to the uniform measure over $\S^1$.
Theorems \ref{thm:discrete_main1d} and \ref{thm:discrete_main2d}
show that these discretizations are robust to additive, heteroscedastic,
sub-Gaussian noise.

\item
Section \ref{section:numerical} shows the results of several numerical
experiments illustrating the theoretical results,
including comparisons between the sliced Cram\'er,
sliced Wasserstein, and Lebesgue distances.

\item
Section \ref{section:proof_details}
contains results on Fourier analysis, approximation theory,
and concentration of measure that are used in the proofs
from Sections \ref{section:properties}
and \ref{section:discrete}.

\item
Section \ref{section:conclusion} concludes the paper, providing
a summary and topics for future research.

\end{enumerate}

\section{Background and preliminary results}
\label{section:preliminaries}

This section introduces key definitions
and background results
that will be referred to in the rest of the paper.
We draw particular attention to Theorem \ref{thm:wasserstein_projections}
and Theorem \ref{thm:sw_deformations} on
the growth of Wasserstein and sliced Wasserstein distances
under deformations, which we have
not seen published in
the literature (but which follow straightforwardly from previously published results).
These results will provide a useful point of reference
for our findings on sliced Cram\'er metrics.

\subsection{Deformations and displacement}
\label{sec:pushforwards}

Given an open, non-empty $\Omega \subset \R^d$
and a $C^1$, invertible $\Psi$ defined on $\Omega$ with
inverse $\Phi = \Psi^{-1}$,
we will call the push-forward $f_\Phi(x) = f(\Phi(x))|\det(\nabla \Phi(x))|$
a \emph{deformation of $f$},
and will also call the mappings $\Phi$ and $\Psi$ themselves deformations.

\subsubsection{Maximum displacement}

We define the \emph{maximum displacement} of the deformation $\Phi$
by
\begin{align}
\varepsilon_\infty(\Phi) = \max_{x \in \Psi(\Omega)} |x - \Phi(x)|.
\end{align}
For example, if $u$ is a fixed unit vector, then the function
$\Psi(x) = x + \epsilon u$ has maximum displacement $\varepsilon_\infty(\Psi) = \epsilon$.

If $u$ is a unit vector and $\Psi$ is a $C^1$, $1$-to-$1$ mapping defined on $\Omega$,
we define the maximum displacement of $\Psi$ along $u$ to be
\begin{align}
\varepsilon(\Psi,u) = \max_{x \in \Omega} |\langle x - \Psi(x), u\rangle |.
\end{align}
Note that the maximum displacement can be written as
\begin{align}
\varepsilon_\infty(\Psi) = \max_{u \in \S^{d-1}} \varepsilon(\Psi,u).
\end{align}

\begin{rmk}
\label{rmk:deformation_distance}

If we define the distance
$\delta$
between deformations on a domain $\Omega$
by
\begin{align}
\Delta(\Psi, \Phi) = \max_{y \in \Omega} | \Psi(y) - \Phi (y) |,
\end{align}
then one can write $\Delta(\Psi,\Phi)$
in terms of the maximum displacement $\varepsilon_\infty$:
\begin{align}
\Delta(\Phi,\Psi) = \varepsilon_{\infty}(\Phi \circ \Psi^{-1}).
\end{align}
Indeed,
\begin{align}
\varepsilon_\infty(\Phi \circ \Psi^{-1}) = \max_{x \in \Psi(\Omega)}|x - \Phi (\Psi^{-1} (x))|
= \max_{y \in \Omega} | \Psi(y) - \Phi (y) |.
\end{align}

\end{rmk}

\subsubsection{Mean displacement}

For a given probability distribution $\eta$ over $\S^{d-1}$,
and a value $1 \le p < \infty$,
we define the \emph{mean displacement} of $\Psi$ as
\begin{align}
\varepsilon_{\eta,p}(\Psi) = \left(\int_{\S^{d-1}} \varepsilon(\Psi,u)^p \, d \eta(u) \right)^{1/p}.
\end{align}

Note that if $\Phi = \Psi^{-1}$, then for all $u$,
\begin{align}
\varepsilon(\Psi,u) &= \max_{x \in \Omega} |\langle x - \Psi(x), u\rangle |
\nonumber \\
&= \max_{y \in \Psi(\Omega)} |\langle \Phi(y) - \Psi(\Phi(y)), u\rangle |
\nonumber \\
&= \max_{y \in \Psi(\Omega)} |\langle \Phi(y) - y, u\rangle |
\nonumber \\
&= \varepsilon(\Phi,u).
\end{align}
It then follows immediately that $\varepsilon_\infty(\Phi) = \varepsilon_\infty(\Psi)$
and $\varepsilon_{\eta,p}(\Phi) = \varepsilon_{\eta,p}(\Psi)$.
When $\eta$ is the uniform measure, we will denote $\varepsilon_{\eta,p}(\Psi)$
by $\varepsilon_{p}(\Psi)$.
Also, when $d=1$, all measures of distortion are identical, and we will
denote their common value by $\varepsilon(\Psi)$.

Clearly, $\varepsilon_{\eta,p}(\Psi) \le \varepsilon_\infty(\Psi)$
for all $p$.
The following result shows what is lost when the inequality is reversed:
\begin{prop}
\label{prop:deformations}
Let $x^* = \operatorname{arg\,max}_x |x - \Psi(x)|$,
and let $u^* = (x^* - \Psi(x^*)) / |x^* - \Psi(x^*)|$. Then
\begin{align}
\varepsilon_{\eta,p}(\Psi)
\ge \varepsilon_{\infty}(\Psi)
    \cdot \left(\int_{S^{d-1}} |\langle u^*, u\rangle|^p \, d\eta(u) \right)^{1/p}.
\end{align}
\end{prop}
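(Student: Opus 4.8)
The plan is to lower-bound $\varepsilon_{\eta,p}(\Psi)$ by exploiting the fact that the maximum defining $\varepsilon(\Psi,u)$ is at least the value of its integrand at the single point $x^*$ where the overall maximum displacement is attained. The key observation is that, for every fixed direction $u \in \S^{d-1}$, the displacement functional dominates the contribution of $x^*$:
$$\varepsilon(\Psi,u) = \max_{x \in \Omega} |\langle x - \Psi(x), u\rangle| \ge |\langle x^* - \Psi(x^*), u\rangle|.$$

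First I would rewrite the right-hand side using the definitions of $x^*$ and $u^*$. By construction $|x^* - \Psi(x^*)| = \varepsilon_\infty(\Psi)$ and $u^*$ is the unit vector pointing in that displacement direction, so $x^* - \Psi(x^*) = \varepsilon_\infty(\Psi)\, u^*$. Substituting and using bilinearity of the inner product gives the pointwise-in-$u$ bound
$$\varepsilon(\Psi,u) \ge \varepsilon_\infty(\Psi)\, |\langle u^*, u\rangle|,$$
valid for every $u \in \S^{d-1}$.

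Next I would raise both sides to the $p$-th power (legitimate since both sides are nonnegative and $t \mapsto t^p$ is increasing on $[0,\infty)$), integrate against the probability measure $\eta$ over $\S^{d-1}$, and pull the constant $\varepsilon_\infty(\Psi)^p$ out of the integral:
$$\int_{\S^{d-1}} \varepsilon(\Psi,u)^p \, d\eta(u) \ge \varepsilon_\infty(\Psi)^p \int_{\S^{d-1}} |\langle u^*, u\rangle|^p \, d\eta(u).$$
Taking the $p$-th root of both sides and recognizing the left-hand side as $\varepsilon_{\eta,p}(\Psi)^p$ yields the claimed inequality.

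I do not expect a serious obstacle here; the argument is a one-point lower bound followed by monotone integration. The only point requiring mild care is domain bookkeeping: one must confirm that the maximizer $x^*$ of $|x - \Psi(x)|$ lies in the same domain $\Omega$ over which the maximum in $\varepsilon(\Psi,u)$ is taken, and that this maximum is genuinely attained, so that $x^*$ and hence $u^*$ are well defined. Both follow from the standing assumption that $\Psi$ is a $C^1$, $1$-to-$1$ mapping on $\Omega$ whose maximum displacement is achieved; should attainment ever be in doubt, one could instead work with a maximizing sequence $x_n$ with $|x_n - \Psi(x_n)| \to \varepsilon_\infty(\Psi)$, apply the pointwise bound along the sequence, and pass to the limit, but under the paper's conventions the supremum is a true maximum and the direct argument suffices.
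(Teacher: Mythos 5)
Your proposal is correct and is essentially identical to the paper's proof: both evaluate the maximum defining $\varepsilon(\Psi,u)$ at the single point $x^*$, rewrite $x^* - \Psi(x^*) = \varepsilon_\infty(\Psi)\,u^*$ to get the pointwise bound $\varepsilon(\Psi,u) \ge \varepsilon_\infty(\Psi)\,|\langle u^*,u\rangle|$, and then average over $u$ against $\eta$. Your extra remarks on attainment of the maximizer are a harmless elaboration of the same argument.
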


\begin{proof}
By definition, $\varepsilon_\infty(\Psi) = |x^* - \Psi(x^*)|$.
For all unit vectors $u$,
\begin{align}
\varepsilon(\Psi,u) = \max_{x} |\langle x - \Psi(x),u\rangle|
\ge |\langle x^* - \Psi(x^*),u\rangle|
= |x^* - \Psi(x^*)| \cdot |\langle u^*,u\rangle|
= \varepsilon_{\infty}(\Psi) \cdot |\langle u^*,u\rangle|.
\end{align}
The result then follows from averaging over $u$.

\end{proof}

For example, if $d=2$ and $\eta$ is the uniform measure over $\S^1$,
\begin{align}
\left(\int_{S^{1}} |\langle u^*, u\rangle|^p \, d\eta(u) \right)^{1/p}
= \left(\frac{2}{\pi}\int_{0}^{\pi/2} \cos(\theta)^p \, d\theta \right)^{1/p}
= \left(\frac{ \Gamma(p/2+1/2)}{\Gamma(p/2 + 1) \sqrt{\pi}  } \right)^{1/p}.
\end{align}

\subsection{Mean mixed norms}
\label{sec:mean_mixed_norms}

Fix a probability measure $\eta$ over the unit sphere $\S^{d-1}$ in $\R^{d}$.
We define the \emph{mean mixed norm} of $f : \R^d \to \R$ by
\begin{align}
\|f\|_{M_\eta^{p,r}} = \left(\int_{\S^{d-1}}\| \calP_u(|f|) \|_{L^p(\R)}^r d\eta(u) \right)^{1/r}
\end{align}
for any $1 \le p \le \infty$ and $1 \le r < \infty$;
and
\begin{align}
\|f\|_{M^{p,\infty}} =
\|f\|_{M_\eta^{p,\infty}} =
\operatorname*{ess \, sup}_{u \in \S^{d-1}} \| \calP_u(|f|) \|_{L^p(\R)}.
\end{align}
When $p=r$, we will define $\|f\|_{M_\eta^{p}} \equiv \|f\|_{M_\eta^{p,p}}$.

\begin{rmk}
For a fixed $u$, $\| \calP_u(|f|) \|_{L^p(\R)}$
is an example of a \emph{mixed norm} \cite{huang2021function},
namely, the $L^p$ norm of the $L^1$ norm of $f$. The mean mixed norm
is then obtained by averaging this mixed norm over the choice of
$L^p$ variable.
\end{rmk}

\begin{rmk}
\label{rmk:mp_vs_lp}
If $f$ is supported on a bounded open set and $f \in L^p$,
then $\|f\|_{M_\eta^{p,r}} < \infty$.
More precisely, if the support of $f$ has diameter $L>0$,
then, $\|f\|_{M_\eta^{p,r}} \le L^{(d-1)(p-1)/p} \|f\|_{L^p}$.
\end{rmk}

\begin{rmk}
If $\eta$ is the uniform measure,  or if $r = \infty$,
then $\|f\|_{M_\eta^{p,r}}$
is rotationally-invariant.
\end{rmk}

\subsection{Sliced metrics}
\label{sec:sliced_distances}

Given a metric $\D(f,g)$ between univariate functions,
a value $p \ge 1$, and a probability density $\eta$ over the unit sphere in $\R^d$,
one can define a corresponding \emph{sliced}
metric $\mathrm{SD}_{\eta,p}(f,g)$ defined between functions $f$ and $g$
of $d$ variables, as follows:
\begin{align}
\mathrm{SD}_{\eta,p}(f,g)
= \left(\int_{\S^{d-1}}
            \D(\calP_u f, \calP_u g)^p d\eta(u) \right)^{1/p},
\end{align}
with the obvious modification when $p=\infty$.
That is, $\mathrm{SD}_{\eta,p}(f,g)$ is obtained by averaging the
distances between the one-dimensional projections of $f$ and $g$
over all directions.
For background on sliced metrics,
see, for instance, the references
\cite{rabin2011wasserstein, bonneel2015sliced,
kolouri2019generalized,
kolouri2018sliced, deshpande2018generative, nietert2022statistical, shi2025fast,
nguyen2023energy, nguyen2021distributional}.
In typical applications of sliced metrics,
$\eta$ is the uniform measure over $\S^{d-1}$;
however, other choices and methods for choosing $\eta$
have been proposed in the literature
\cite{nguyen2023energy, nguyen2021distributional},
and in this paper we consider general $\eta$.
There also exist generalizations of sliced metrics
to other geometries \cite{le2019tree, tran2025tree},
though we will not consider these in the present work.

\subsection{Wasserstein distances}
\label{sec:wasserstein}

Recall that if $f$ and $g$ are probability densities on a subset $\Omega \subset \R^d$,
their \emph{$p$-Wasserstein distance} $\wass_p(f,g)$ (also known as the
Kantorovich distance) is defined as
\begin{align}
\wass_p(f,g) = \inf_{\Pi \in \mathcal{M}(f,g)} 
    \left(\int_{\Omega} \int_{\Omega} |x-y|^p d\Pi(x,y)\right)^{1/p},
\end{align}
where $\mathcal{M}(f,g)$ denotes the space of all probability measures on
$\Omega \times \Omega$ with marginals equal to $f$ and $g$, respectively
\cite{villani2003topics, villani2008optimal}.
That is, $\Pi \in \mathcal{M}(f,g)$ if for all measurable $E \subset \Omega$,
\begin{align}
\Pi(E \times \Omega) = \int_E f(x) dx,
\end{align}
and
\begin{align}
\Pi(\Omega \times E) = \int_E g(y) dy.
\end{align}

Informally, $\wass_p(f,g)$ is the minimal cost of rearranging
a unit of mass with distribution $f$ into one with distribution
$G$, where the cost of moving mass between locations $x$ and $y$
is $|x - y|^p$.
The distance $\wass_1(f,g)$ is also known as the \emph{Earth Mover's Distance (EMD)}
between the probability measures $f$ and $g$ \cite{villani2003topics, villani2008optimal}.
The Wasserstein distances and their variants have been widely used in statistics,
machine learning, image processing, and related areas
\cite{panaretos2019statistical, peyre2019computational, santambrogio2015optimal,
bonneel2015sliced, rabin2011wasserstein, rubner2000earth,
levina2001earth, bernton2019parameter, rigollet2019uncoupled,
moosmuller2023linear, cloninger2019people}.

\subsubsection{Wasserstein distances and deformations}

The Wasserstein distance
is a relaxation of the Monge distance, defined by
\begin{align}
\monge_p(f,g)
= \inf_{\Phi \in \mathcal{T}(f,g)} 
    \left(\int_{\Omega} |x-\Phi(x)|^p f(x) \,dx \right)^{1/p}
\end{align}
where $\mathcal{T}(f,g)$ contains those functions $\Phi : \Omega \to \Omega$
such that $\int_{E} g(x) dx = \int_{\Phi^{-1}(E)} f(x) dx$,
that is, which push $f$ onto $g$.
Indeed, any $\Phi$ in $\calT(f,g)$ induces a measure $\Pi_\Phi$
in $\calM(f,g)$, with
\begin{align}
\label{eq:wp_monge}
\int_{\Omega} \int_{\Omega} |x-y|^p d\Pi_\Phi(x,y)
= \int_{\Omega} |x-\Phi(x)|^p f(x) \,dx,
\end{align}
and hence $\wass_p(f,g) \le \monge_p(f,g)$. (In fact, when $\monge_p(f,g)$
is finite, equality holds; see \cite{santambrogio2015optimal}.)
Consequently, if $\Phi$ is a smooth bijection on $\Omega$
and $f_\Phi(x) = f(\Phi(x))|\det(\nabla \Phi(x))|$,
then $\Phi$ is contained in $\calT(f,f_\Phi)$, and so
\begin{align}
\label{eq:wasserstein_deformation2}
\wass_p(f,f_\Phi) \le \monge_p(f,f_\Phi) \le
\left(\int_{\Omega} |x-\Phi(x)|^p f(x) \,dx \right)^{1/p}
\le \varepsilon_\infty(\Phi) \left(\int_{\Omega} f(x) \,dx \right)^{1/p}
= \varepsilon_\infty(\Phi).
\end{align}

In fact, a more general robustness result may be easily shown,
which we state now.
\begin{thm}
\label{thm:wasserstein_projections}
Suppose $f$ is a probability density supported on
a bounded, open set $\Omega \subset \R^D$, and let $\Phi : \Omega \to \Omega$ be a smooth bijection.
Let $\calQ$ be a tomographic projection operator onto a $d$-dimensional
subspace, $d \le D$.
Then for all $p \ge 1$,
\begin{align}
\wass_p(\calQ f, \calQ f_\Phi) \le \varepsilon_\infty(\Phi).
\end{align}

\begin{proof}
An identical proof to that of Lemma 1 in \cite{rao2020wasserstein} shows that
$\wass_p(\calQ f, \calQ f_\Phi) \le \wass_p(f,f_\Phi)$
(note that the left side refers to transportation in $\R^{d}$,
and the right side to $\R^D$).
The result then follows from \eqref{eq:wasserstein_deformation2}.

\end{proof}

\end{thm}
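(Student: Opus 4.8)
The plan is to reduce the projected inequality to the deformation bound \eqref{eq:wasserstein_deformation2} already established in the text, by first proving that tomographic projection is a contraction for the Wasserstein metric. That is, I would first show that $\wass_p(\calQ f, \calQ g) \le \wass_p(f, g)$ for arbitrary probability densities $f$ and $g$ on $\Omega$, and only afterward specialize to $g = f_\Phi$. This is the content of the cited Lemma 1 of \cite{rao2020wasserstein}, reconstructed here.

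To establish the contraction, I would regard the tomographic projection $\calQ$ as the push-forward by the linear orthogonal projection $\pi : \R^D \to \R^d$ onto the target $d$-dimensional subspace: integrating out the $D-d$ orthogonal directions is exactly the marginalization performed by $\pi_\sharp$, so that $\calQ f = \pi_\sharp f$ and $\calQ g = \pi_\sharp g$ at the level of measures. Now let $\Pi$ be an optimal coupling in $\calM(f,g)$ achieving $\wass_p(f,g)$, and push it forward under $(\pi,\pi) : \R^D \times \R^D \to \R^d \times \R^d$. The resulting measure $(\pi,\pi)_\sharp \Pi$ has marginals $\pi_\sharp f = \calQ f$ and $\pi_\sharp g = \calQ g$, hence lies in $\calM(\calQ f, \calQ g)$. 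The crucial point is that orthogonal projection is $1$-Lipschitz, $|\pi(x)-\pi(y)| \le |x-y|$, which yields
\[
\int_{\R^d}\!\int_{\R^d} |x'-y'|^p \, d\big((\pi,\pi)_\sharp \Pi\big)(x',y')
= \int_\Omega\!\int_\Omega |\pi(x)-\pi(y)|^p \, d\Pi(x,y)
\le \int_\Omega\!\int_\Omega |x-y|^p \, d\Pi(x,y).
\]
Taking $p$-th roots and using that $(\pi,\pi)_\sharp \Pi$ is merely one admissible coupling of $\calQ f$ and $\calQ g$ while $\Pi$ is optimal for $f,g$, one obtains $\wass_p(\calQ f, \calQ g) \le \wass_p(f,g)$.

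With the contraction in hand, I would set $g = f_\Phi$ and invoke \eqref{eq:wasserstein_deformation2}, which already gives $\wass_p(f, f_\Phi) \le \varepsilon_\infty(\Phi)$ via the Monge map $\Phi$ itself. Chaining the two inequalities produces $\wass_p(\calQ f, \calQ f_\Phi) \le \wass_p(f, f_\Phi) \le \varepsilon_\infty(\Phi)$, which is the claim.

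The main obstacle is bookkeeping around the dimension mismatch: the left-hand Wasserstein distance is a transportation problem in $\R^d$ while the right-hand side lives in $\R^D$, so I must verify carefully that $(\pi,\pi)_\sharp \Pi$ is genuinely a coupling of the stated marginals (a direct computation from the definition of push-forward together with the marginal conditions on $\Pi$) and that the identification $\calQ f = \pi_\sharp f$ holds as measures. Beyond this, the argument is entirely soft: it uses only the $1$-Lipschitz property of orthogonal projection and requires no smoothness of $\Phi$ in the contraction step, with all regularity already absorbed into \eqref{eq:wasserstein_deformation2}.
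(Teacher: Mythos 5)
Your proof is correct and follows the same route as the paper: the paper's proof consists precisely of citing the projection contraction $\wass_p(\calQ f,\calQ f_\Phi)\le \wass_p(f,f_\Phi)$ (Lemma 1 of the cited reference, whose push-forward-of-an-optimal-coupling argument via the $1$-Lipschitz map $(\pi,\pi)$ is exactly what you reconstruct) and then invoking \eqref{eq:wasserstein_deformation2}. The only difference is that you spell out the cited lemma's proof rather than quoting it, which is a faithful and complete rendering of the same argument.
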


\subsubsection{Wasserstein distances in 1D}

It is well-known that Wasserstein distances in 1D take a on a particularly simple form.
Denote by $\V$ the Volterra operator \cite{gohberg1970theory} on $L^1([a,b])$, defined by
\begin{align}
(\V f)(x) = \int_{a}^{x} f(t) dt.
\end{align}
Then when $d=1$, it is known \cite{santambrogio2015optimal}
that  $\wass_p(f,g)$ may be written as follows:
\begin{align}
\label{eq:wasserstein_1d}
\wass_p(f,g) = \| (\V f)^{-1} - (\V g)^{-1} \|_{L^p}.
\end{align}
Here, $(\V f)^{-1}$ denotes the functional inverse of
$\V f$, defined as
\begin{align}
(\V f)^{-1}(x) = \inf\{t \in [a,b] \ : \ (\V f)(t) \ge x\}.
\end{align}
When $p=1$, it is also true that
$\wass_1(f,g) = \| \V f - \V g \|_{L^1}$.

\subsubsection{Sliced Wasserstein}

Given $p \ge 1$ and a probability density $\eta$ over $\S^{d-1}$,
we denote by $\SW_{\eta,p}$ the sliced Wasserstein distance:
\begin{align}
\mathrm{SW}_{\eta,p}(f,g)
= \left(\int_{\S^{d-1}}
            \wass_p(\calP_u f, \calP_u g)^p d\eta(u) \right)^{1/p}.
\end{align}
Sliced Wasserstein distances have been the subject of
considerable research activity in recent years \cite{rabin2011wasserstein, bonneel2015sliced,
kolouri2019generalized,
kolouri2018sliced, deshpande2018generative, nietert2022statistical, shi2025fast}.
The work \cite{shi2025fast} proves that sliced Wasserstein
distances are robust to rotations and translations,
and also describes a fast discretization.

We prove an analogue of Theorem \ref{thm:wasserstein_projections}
for sliced Wasserstein distances:

\begin{thm}
\label{thm:sw_deformations}
Suppose $f$ is a probability density supported on
a bounded, open set $\Omega \subset \R^D$, and let $\Phi : \Omega \to \Omega$ be a smooth bijection.
Let $\calQ$ be a tomographic projection operator onto $d$-dimensional
subspace, $d \le D$.
Then for all $p \ge 1$,
\begin{align}
\SW_{\eta,p}(\calQ f, \calQ f_\Phi) \le \varepsilon_\infty(\Phi).
\end{align}

\end{thm}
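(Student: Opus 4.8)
The plan is to reduce the statement directly to Theorem \ref{thm:wasserstein_projections}, which already handles a single tomographic projection. The key structural observation is that slicing a projection is again a projection: if $\calP_u$ denotes the one-dimensional projection onto $u \in \S^{d-1}$ within the range of $\calQ$, then $\calP_u \circ \calQ$ coincides with the one-dimensional tomographic projection $\calP_{\tilde u}$ of the original function on $\R^D$, where $\tilde u \in \S^{D-1}$ is the image of $u$ under the isometric embedding of the $d$-dimensional subspace into $\R^D$. This holds because both operators integrate $f$ over the same affine fibers — the hyperplanes orthogonal to $\tilde u$ — so the iterated integration defining $\calP_u \calQ f$ collapses to a single tomographic integral. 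Applied to both $f$ and $f_\Phi$, this yields $\calP_u \calQ f = \calP_{\tilde u} f$ and $\calP_u \calQ f_\Phi = \calP_{\tilde u} f_\Phi$ as univariate functions.

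First I would verify this composition identity carefully, using the definition of $\calP_\calU$: complete $u$ to an orthonormal frame of the $d$-dimensional subspace, then complete the embedded frame to an orthonormal basis of $\R^D$, and combine the two stages of integration via Fubini's theorem. Next, I would invoke Theorem \ref{thm:wasserstein_projections} in the special case of a one-dimensional target (i.e.\ with $d=1$ there), applied with the projection operator $\calP_{\tilde u}$, to obtain the pointwise-in-$u$ bound
\begin{align}
\wass_p(\calP_u \calQ f, \calP_u \calQ f_\Phi)
= \wass_p(\calP_{\tilde u} f, \calP_{\tilde u} f_\Phi)
\le \varepsilon_\infty(\Phi).
\end{align}
Here the hypotheses of Theorem \ref{thm:wasserstein_projections} — that $f$ is a probability density on a bounded open $\Omega$ and $\Phi$ a smooth bijection — are inherited directly. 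The crucial feature is that the right-hand side is the single number $\varepsilon_\infty(\Phi)$, independent of the projection direction, so the bound is uniform over all $u \in \S^{d-1}$.

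Finally, I would integrate this uniform bound against $\eta$. Since $\eta$ is a probability measure on $\S^{d-1}$,
\begin{align}
\SW_{\eta,p}(\calQ f, \calQ f_\Phi)^p
= \int_{\S^{d-1}} \wass_p(\calP_u \calQ f, \calP_u \calQ f_\Phi)^p \, d\eta(u)
\le \int_{\S^{d-1}} \varepsilon_\infty(\Phi)^p \, d\eta(u)
= \varepsilon_\infty(\Phi)^p,
\end{align}
and taking $p$-th roots gives the claim. Note that because the per-direction bound is constant in $u$, there is no need to track the distribution of the embedded directions $\tilde u$ or to push $\eta$ forward, which is what makes the averaging step trivial.

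I expect the only genuine subtlety — and it is minor — to be the rigorous justification of the composition identity $\calP_u \circ \calQ = \calP_{\tilde u}$, in particular checking that the embedding $u \mapsto \tilde u$ is well-defined and isometric so that no Jacobian factors intrude on the integration. Everything else is a one-line application of the already-established single-projection bound followed by the observation that a uniform bound survives averaging against a probability measure; no new optimal-transport estimates are required beyond those already used in Theorem \ref{thm:wasserstein_projections}.
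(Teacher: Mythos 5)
Your proposal is correct and matches the paper's proof essentially step for step: the paper likewise reduces to Theorem \ref{thm:wasserstein_projections} via the composition identity $\calP_u \calQ f = \calP_{(u,0)} f$ (your $\calP_{\tilde u}$, after taking $\calQ$ to project onto the first $d$ coordinates without loss of generality) and then averages the uniform bound $\varepsilon_\infty(\Phi)$ over $u$ against $\eta$. Your extra care with the Fubini justification of the composition identity is the same verification the paper defers to the proof of Corollary \ref{cor:main_projections}.
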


\begin{proof}
Without loss of generality, suppose $\calQ$
projects onto the first $d$ coordinates.
Let $u \in \S^{d-1}$.
It is easy to see (and will be shown later, in the proof of Corollary \ref{cor:main_projections}
in Section \ref{section:properties}) that
\begin{math}
(\calP_u \calQ f)(t) = (\calP_{(u,0)} f)(t),
\end{math}
which is the tomographic projection of $f$ onto the span of
$(u,0) \in \R^d \times \R^{D-d}$.
Consequently, from Theorem \ref{thm:wasserstein_projections},
\begin{align}
\wass_p(\calP_u \calQ f, \calP_u \calQ f_\Phi)
= \wass_p(\calP_{(u,0)} f, \calP_{(u,0)} f_\Phi)
\le \varepsilon_\infty(\Phi).
\end{align}
The result now follows by averaging over $u$.

\end{proof}

\subsubsection{Wasserstein distances and convolution}

In signal and image processing applications,
one typically observes signals/images that have been
convolved with a filter induced from the measurement
process. It is therefore of interest to understand
how metrics behave when their inputs are convolved
by a common function. For Wasserstein  distances,
we have the following result:

\begin{thm}
\label{thm:wasserstein_convo}
Suppose $f$, $g$  and $w$ are  probability
densities on $\R^d$. Then for all $p \ge 1$,
\begin{align}
\wass_p(f \ast w, g \ast w) \le \wass_p(f,g).
\end{align}
\end{thm}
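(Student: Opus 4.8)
The plan is to work from the Kantorovich (coupling) formulation of $\wass_p$ and to exploit a single algebraic observation: shifting two points by a common vector leaves their difference, and hence the transport cost $|x-y|^p$, unchanged. Probabilistically, if $(X,Y)$ is an optimal coupling of $f$ and $g$ and $Z \sim w$ is drawn independently, then $(X+Z,\,Y+Z)$ is a coupling of $f\ast w$ and $g\ast w$ whose cost equals that of $(X,Y)$, since $(X+Z)-(Y+Z)=X-Y$. The whole proof is the measure-theoretic version of this one-line idea.

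Concretely, let $\Pi \in \calM(f,g)$ be an optimal coupling attaining $\wass_p(f,g)$ (the minimum is attained under the standing assumptions; otherwise one takes a near-optimal $\Pi$ and lets the suboptimality tend to $0$ at the end). Let $W$ denote the measure with density $w$, form the product measure $\Pi \otimes W$ on $\R^d \times \R^d \times \R^d$, and define the shift map $T(x,y,z) = (x+z,\,y+z)$. I would then set $\widetilde\Pi = T_\sharp(\Pi \otimes W)$, a measure on $\R^d \times \R^d$, as the candidate coupling for $f \ast w$ and $g \ast w$.

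The step I expect to be the main obstacle is verifying that $\widetilde\Pi$ is admissible, i.e.\ that $\widetilde\Pi \in \calM(f\ast w, g \ast w)$. Its first marginal is the push-forward of $\Pi \otimes W$ under $(x,y,z)\mapsto x+z$; since the first marginal of $\Pi$ is $f$ and $z$ is independently distributed according to $w$, a Fubini computation against test functions identifies this marginal with $f \ast w$, and symmetrically the second marginal is $g \ast w$. This amounts to writing out the push-forward explicitly and recognizing the definition of convolution; it is routine but is the only genuinely technical point. The existence of an optimal plan is a minor additional subtlety that is sidestepped by the near-optimal fallback mentioned above.

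Granting admissibility, the definition of $\wass_p$ as a minimum over couplings gives the desired bound directly, with the crucial cancellation eliminating all dependence on $z$:
\begin{align*}
\wass_p(f \ast w, g \ast w)^p
&\le \int_{\R^d \times \R^d} |x' - y'|^p \, d\widetilde\Pi(x',y') \\
&= \int |(x+z)-(y+z)|^p \, d(\Pi \otimes W)(x,y,z) \\
&= \int |x-y|^p \, d\Pi(x,y) = \wass_p(f,g)^p.
\end{align*}
The middle equality is the change of variables through $T$, and $(x+z)-(y+z)=x-y$ removes the $z$-integration. Taking $p$-th roots yields $\wass_p(f\ast w, g \ast w) \le \wass_p(f,g)$, as claimed.
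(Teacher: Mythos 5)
Your proof is correct, but it takes the primal route where the paper takes the dual one. The paper proves the theorem via Kantorovich duality: it writes $\wass_p(f,g)^p$ as a supremum over pairs $(\varphi,\psi)$ with $\varphi(x)+\psi(y)\le|x-y|^p$, observes that convolving both potentials with $\wtilde w(z)=w(-z)$ preserves this constraint (since $\int \wtilde w = 1$), and transfers the convolution from the densities to the potentials by self-adjointness. You instead work with the coupling formulation directly: push an (near-)optimal plan $\Pi$ forward under $(x,y,z)\mapsto(x+z,y+z)$ against an independent $Z\sim w$, check the marginals are $f\ast w$ and $g\ast w$ by a Fubini computation, and use the cancellation $(x+z)-(y+z)=x-y$. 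Both arguments are standard and sound. Yours is more elementary and more transparent probabilistically --- it is exactly the direct proof of Zolotarev's regularity property $D(X+Z,Y+Z)\le D(X,Y)$ that the paper mentions in its footnote --- and it avoids invoking the duality theorem, at the modest cost of the marginal verification and the attainment issue (which you correctly sidestep with near-optimal plans; in fact, for the lower semicontinuous cost $|x-y|^p$ on $\R^d$ optimal plans do exist, so even that fallback is unnecessary). The paper's dual argument buys the reverse trade: no push-forward construction and no attainment question at all, since it manipulates arbitrary feasible potentials under a supremum, but it leans on Kantorovich duality as a black box.
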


This property is referred to by Zolotarev
as \emph{regularity} of the metric
\cite{zolotarev1979ideal, zolotarev1976metric,
zolotarev1984probability}.\footnote{An equivalent
statement of regularity for
a metric $\D$ between random variables
is that for all random variables $X$, $Y$ and $Z$,
where $Z$ is independent of $X$ and $Y$,
$D(X+Z,Y+Z) \le D(X,Y)$.}
While the result is certainly known and is straightforward to
prove, we had difficulty locating
a reference containing a proof of the precise statement;
for the reader's convenience we provide a proof here.

\begin{proof}[Proof of Theorem \ref{thm:wasserstein_convo}]
By Kantorovich duality (see, e.g., Chapter 5 in \cite{villani2008optimal}),
\begin{align}
\wass_p(f,g)^p
= \sup_{(\varphi,\psi) \in \F_p} \left\{ \int f(x) \varphi(x) \, dx
        + \int g(y) \psi(y) \, dy \right\},
\end{align}
where $\F_p$ contains all pairs $(\varphi,\psi)$
of integrable functions $\varphi$ and $\psi$
satisfying $\varphi(x) + \psi(y) \le |x-y|^p$.
Take any such $\varphi$ and $\psi$.
Letting $\wtilde w(z) = w(-z)$,
we have
\begin{align}
\int (f \ast w)(x) \varphi(x) \, dx
= \int (\varphi \ast \wtilde w)(z) f(z) \, dz,
\end{align}
and
\begin{align}
\int (g \ast w)(y) \psi(y) \, dy
= \int (\psi \ast \wtilde w)(z) g(z) \, dz.
\end{align}
For any $x$ and $y$ we have
\begin{align}
(\varphi \ast \wtilde w)(x) + (\psi \ast \wtilde w)(y)
= \int (\varphi(x-z)+\psi(y-z)) \wtilde w(z) \,dz
\le |x-y|^p \int \wtilde w(z) \,dz
= |x-y|^p.
\end{align}
Therefore, $(\varphi \ast \wtilde w,  \psi \ast \wtilde w) \in \F_p$,
and
\begin{align}
\int (f \ast w)(x) \varphi(x) \, dx
        + \int (g \ast w)(y) \psi(y) \, dy
= \int (\varphi \ast \wtilde w)(z) f(z) \, dz
        + \int (\psi \ast \wtilde w)(z) g(z) \, dz
\le \wass_p(f,g),
\end{align}
and so taking the supremum over all $(\varphi,\psi) \in \F_p$
proves the result.

\end{proof}

\begin{cor}
\label{cor:sw_convo}
Suppose $f$, $g$  and $w$ are probability
densities on $\R^d$, and let $\eta$ be a probability
density over $\S^{d-1}$.
Then for all $p \ge 1$,
\begin{align}
\SW_p(f \ast w, g \ast w) \le \SW_p(f,g).
\end{align}

\end{cor}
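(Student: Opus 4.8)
The plan is to reduce the $d$-dimensional inequality to the one-dimensional regularity statement already established in Theorem \ref{thm:wasserstein_convo}, by exploiting the interaction between tomographic projection and convolution. Recalling that
\[
\SW_p(f \ast w, g \ast w)^p = \int_{\S^{d-1}} \wass_p\bigl(\calP_u(f \ast w), \calP_u(g \ast w)\bigr)^p \, d\eta(u),
\]
it suffices to prove the pointwise (in $u$) bound $\wass_p(\calP_u(f \ast w), \calP_u(g \ast w)) \le \wass_p(\calP_u f, \calP_u g)$ for each direction $u \in \S^{d-1}$, and then raise to the $p$-th power, integrate against $\eta$, and take $p$-th roots.

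The key structural fact I would establish first is the identity
\[
\calP_u(f \ast w) = (\calP_u f) \ast (\calP_u w),
\]
where the convolution on the right-hand side is the one-dimensional convolution. This follows immediately from the Fourier slice theorem together with the convolution theorem: writing $h = f \ast w$, we have $\what{\calP_u h}(\xi) = \what{h}(\xi u) = \what{f}(\xi u)\,\what{w}(\xi u) = \what{\calP_u f}(\xi)\,\what{\calP_u w}(\xi)$, which is exactly the Fourier transform of $(\calP_u f) \ast (\calP_u w)$; the identity then follows by uniqueness of the Fourier transform. I would also note that $\calP_u f$, $\calP_u g$, and $\calP_u w$ are all one-dimensional probability densities, since tomographic projection preserves nonnegativity and total integral.

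With the identity in hand, the pointwise bound is immediate: applying Theorem \ref{thm:wasserstein_convo} in dimension one, with the role of the convolving density played by $\calP_u w$, gives
\[
\wass_p\bigl((\calP_u f) \ast (\calP_u w), (\calP_u g) \ast (\calP_u w)\bigr) \le \wass_p(\calP_u f, \calP_u g).
\]
Rewriting the left-hand side via the projection--convolution identity and then averaging over $u$ against $\eta$ completes the argument.

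I do not anticipate a serious obstacle here; the entire content is the projection--convolution identity, after which the result is a direct corollary of the one-dimensional case. The only points requiring mild care are the verification that the projected densities remain probability densities (so that Theorem \ref{thm:wasserstein_convo} genuinely applies), and, if one wishes to be fully rigorous, confirming that the relevant Fourier transforms are well-defined and that the inversion argument is valid under the standing integrability assumptions on $f$, $g$, and $w$.
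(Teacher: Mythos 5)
Your proposal is correct and matches the paper's proof essentially verbatim: the paper likewise establishes $\calP_u(f \ast w) = (\calP_u f) \ast (\calP_u w)$ via the Fourier slice theorem (deferred to Section \ref{section:convolutions}), applies the $d=1$ case of Theorem \ref{thm:wasserstein_convo} direction by direction, and averages over $u$ against $\eta$. Your added remarks on the projections remaining probability densities and on Fourier inversion are sensible bookkeeping but do not change the argument.
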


\begin{proof}
It is straightforward to see (and will be shown in Section \ref{section:convolutions})
that for any unit vector $u$ in $\R^d$,
$\calP_u (f \ast w) = (\calP_u f) \ast (\calP_u w)$.
Then from the $d=1$ case of Theorem \ref{thm:wasserstein_convo},
\begin{align}
\wass_p(\calP_u (f \ast w), \calP_u (g \ast w))
= \wass_p((\calP_u f) \ast (\calP_u w), (\calP_u g) \ast (\calP_u w))
\le \wass_p(\calP_u f, \calP_u g).
\end{align}
Averaging over all $u \in \S^{d-1}$ then proves the result.

\end{proof}

\begin{rmk}
Young's convolutional inequality (e.g.\ see Chapter 8
in \cite{folland1999real}) states that the same
property holds for the ordinary $L^p$ distances on $\R^d$, namely,
for $f$ and  $g$ in $L^p$ and $w$ in $L^1$,
\begin{math}
\|f\ast w - g \ast w\|_{L^p} \le \|w\|_{L^1} \|f-g\|_{L^p}.
\end{math}

\end{rmk}

\subsection{Cram\'er and sliced Cram\'er metrics}
\label{sec:cramer_defined}

We introduce the primary objects of interest
in this paper: the Cram\'er
and sliced Cram\'er distances.

\subsubsection{Cram\'er distances}

Let $f$ be in $L^1(a,b)$. For any value $1 \le p  \le \infty$,
we will refer to $\|f\|_{V^p} \equiv \|\V f\|_{L^p}$  as the \emph{Volterra $p$-norm}
of $f$.
Note that, because $\V f$ is in $L^\infty(a,b)$, the Volterra $p$-norm of $f$ is finite
for any function $f$ in $L^1(a,b)$.
We denote by
$\cdist_p(f,g) = \|f-g\|_{V^p}$ the \emph{$p$-Cram\'er metric}
(or \emph{$p$-Cram\'er distance})
between functions $f$ and $g$,
named after Harald Cram\'er \cite{cramer1928composition,
rizzo2016energy, szekely2013energy}.\footnote{In \cite{zolotarev1984probability}, Zolotarev
refers to the $p$-Cram\'er distance more simply as the ``$L_p$-metric''.}

\begin{rmk}
While the Cram\'er metrics are typically used to compare probability
distributions, they are well-defined for any $f$ and $g$ in $L^1(a,b)$.
Note, however, that it is most natural to compare functions
supported on $(a,b)$ and for which
$\int_{a}^{b} f = \int_{a}^{b} g$, since otherwise
the distance may change by enlarging the interval.
The results in this paper assume that $\int_{a}^{b} f = \int_{a}^{b} g$;
however, they do not require $f$ and $g$ to be non-negative.
\end{rmk}

The following result provides a dual formulation of the Volterra $p$-norm
(and hence of the $p$-Cram\'er metric)
that will be useful in our subsequent analysis.
It essentially appears as Theorem 1 in \cite{maejima1987ideal};
because of its key role in this paper,
we provide a self-contained proof for the reader's convenience.

\begin{prop}
\label{prop:variational}
Let $1 \le p  \le \infty$ and let $q$ be the conjugate exponent:
\begin{math}
1/p + 1/q = 1.
\end{math}
Then for any  function $f$ in $L^1(a,b)$,
\begin{align}
\label{eq:variational}
\|f\|_{V^p}
= \sup_{G \in \AC_0 : \|G'\|_{L^q} \le 1} \langle f, G \rangle.
\end{align}
\end{prop}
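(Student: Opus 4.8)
The plan is to reduce the claimed identity to the standard duality between $L^p$ and $L^q$ by means of a single integration by parts. Recalling that $\|f\|_{V^p} = \|\V f\|_{L^p}$, the goal is to show that the right-hand supremum equals $\|\V f\|_{L^p}$.

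First I would set up the correspondence between the admissible test functions and the unit ball of $L^q$. Every $G \in \AC_0$ with $\|G'\|_{L^q}\le 1$ produces $g := G' \in L^q$ with $\|g\|_{L^q}\le 1$; conversely, given any $g \in L^q(a,b)$ with $\|g\|_{L^q}\le 1$, the function $G(x) = -\int_x^b g(t)\,dt$ lies in $\AC_0$ (it is absolutely continuous and satisfies $G(b)=0$) and has $G' = g$ almost everywhere. Thus $G \mapsto G'$ is a constraint-preserving bijection between the two index sets, and it suffices to evaluate $\langle f, G\rangle$ in terms of $g$ and take the supremum.

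The key computation is the integration by parts. Writing $F = \V f$, both $F$ and $G$ are absolutely continuous on $(a,b)$, so the integration-by-parts formula applies and gives $\langle f, G\rangle = \int_a^b F'(x) G(x)\,dx = [F(x)G(x)]_a^b - \int_a^b F(x) G'(x)\,dx$. The boundary terms vanish because $F(a) = (\V f)(a) = 0$ and $G(b)=0$, leaving $\langle f, G\rangle = -\langle \V f, g\rangle$. Since the constraint set $\{g : \|g\|_{L^q}\le 1\}$ is symmetric under $g \mapsto -g$, the sign is immaterial, so $\sup_G \langle f, G\rangle = \sup_{\|g\|_{L^q}\le 1}\langle \V f, g\rangle$, and the latter is exactly $\|\V f\|_{L^p}$ by the standard $L^p$--$L^q$ duality on the bounded interval $(a,b)$ (note $\V f$ is bounded, hence in every $L^p$). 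This is the desired value.

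The argument is essentially routine, and the only points that need care are at the margins. I must invoke the integration-by-parts formula for absolutely continuous (rather than merely $C^1$) functions, which is standard but does rely on $F, G \in \AC$. I should also confirm the duality identity at the endpoint exponents: for $p=1$ (so $q=\infty$) the supremum is realized by $g = \sign(\V f)$, while for $p=\infty$ (so $q=1$) the supremum need not be attained, yet still equals $\|\V f\|_{L^\infty}$ because $\V f$ is continuous. This is the one place where the distinction between supremum and maximum matters, and I expect it to be the most delicate part of the write-up; no reflexivity or compactness is required, since the result is stated and proved purely as a supremum.
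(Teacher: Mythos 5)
Your proof is correct and takes essentially the same route as the paper: your integration by parts is exactly the adjoint identity $\langle \V f, g\rangle = \langle f, \V^* g\rangle$ with $(\V^* g)(x) = \int_x^b g(t)\,dt$ used there, followed by the same parametrization of $\AC_0$ by derivatives and the standard $L^p$--$L^q$ duality on the bounded interval. If anything, your explicit handling of the sign (via symmetry of the $L^q$ unit ball under $g \mapsto -g$) and of the endpoint exponents $p=1$ and $p=\infty$ is more careful than the paper's write-up.
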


\begin{proof}

First, note that
the adjoint transform $\V^*$ is given by
\begin{align}
(\V^* f)(x) = \int_{x}^{b} f(t) dt.
\end{align}
This operator satisfies
\begin{align}
\langle \V f, g \rangle = \langle f, \V^* g \rangle
\end{align}
where $f$ and $g$ are two functions in $L^1(a,b)$.

By duality of $L^p$ and $L^q$, we have:
\begin{align}
\|f\|_{V^p} = \|\V f\|_{L^p}
= \sup_{g:\|g\|_{L^q} \le 1} \int_{a}^{b}  (\V f)(x) g(x)dx
= \sup_{g:\|g\|_{L^q} \le 1} \langle \V f ,g \rangle
= \sup_{g:\|g\|_{L^q} \le 1} \langle f , \V^* g \rangle.
\end{align}
Any function of the form $\V^* g$ is contained in $\AC_0$,
and any function $G$ in $\AC_0$ is of the form
$G = \V^* g$ where $g = G'$ almost everywhere. Consequently:
\begin{align}
\|f\|_{V^p}
= \sup_{g:\|g\|_{L^q} \le 1} \langle f , \V^* g \rangle
= \sup_{G \in \AC_0:\|G'\|_{L^q} \le 1} \langle f , G \rangle,
\end{align}
which completes the proof.

\end{proof}

\subsubsection{Sliced Cram\'er distances}

Following the framework from
Section \ref{sec:sliced_distances},
given a probability measure $\eta$
over the unit sphere $\S^{d-1} \subset \R^d$,
for all $1 \le p < \infty$
we define \emph{sliced $p$-Cram\'er metric}
between $f,g : \R^d \to \R$ as
\begin{align}
\SC_{\eta,p}(f,g)
= \left( \int_{\S^{d-1}} \cdist_p(\calP_u f,\calP_u g)^p \, d\eta(u) \right)^{1/p},
\end{align}
and when $p=\infty$ we define
\begin{align}
\SC_{\infty}(f,g) = \SC_{\eta,\infty}(f,g)
= \sup_{u \in \S^{d-1}} \cdist_\infty(\calP_u f,\calP_u g).
\end{align}

\begin{rmk}
Cram\'er and sliced Cram\'er metrics have garnered attention in recent years
as metrics for comparing probability measures
in machine learning applications
\cite{nadjahi2020statistical, kolouri2022generalized,
bellemare2017cramer, lheritier2022cramer,
kolouri2020sliced}
and image processing \cite{shi2025fast}.
The $\infty$-Cram\'er metric is also known as the Kolmogorov Metric between $f$ and $g$
\cite{gibbs2002choosing},
and arises in the context of goodness-of-fit
testing in statistics \cite{massey1951kolmogorov}.
The $1$-Cram\'er metric is equal to the $1$-Wasserstein distance
between the probability distributions
$f$ and $g$,
described in Section \ref{sec:wasserstein}.
The sliced $2$-Cram\'er distances are also equal (up to a change in scaling)
to the multi-dimensional energy distances used in statistical
testing \cite{rizzo2016energy, szekely2013energy},
as remarked upon in \cite{xu2025manifold}.
\end{rmk}

For a function $f$ of two variables, we define its \emph{sliced Volterra norm} by
\begin{align}
\|f\|_{SV_\eta^{p}} = \left(\int_{\S^{d-1}} \|\calP_u f\|_{L^p}^p \, d \eta(u) \right)^{1/p}
\end{align}
when $1 \le p < \infty$,
and
\begin{align}
\|f\|_{SV^{\infty}} = \|f\|_{SV_\eta^{\infty}} 
= \sup_{u \in \S^{d-1}} \|\calP_u f\|_{L^\infty}.
\end{align}
Then for two functions $f$ and $g$, $\SC_{\eta,p}(f,g) = \|f-g\|_{SV_\eta^{p}}$.
When $\eta$ is the uniform measure over $\S^{d-1}$, we will denote the sliced
Cram\'er metric more simply as $\SC_p(f,g)$, and the sliced Volterra norm
more simply as $\|f\|_{SV^p}$.

In Section \ref{section:properties}, we will study the geometric
properties of sliced Cram\'er metrics,
characterizing their growth under deformations.
In Section \ref{section:discrete}
we will analyze efficient, Fourier-based discretizations for the 1D and
2D distances (the latter based on those in \cite{shi2025fast})
between functions with equal integrals,
and prove their  robustness to additive heteroscedastic noise.

\section{Properties of sliced Cram\'er metrics}
\label{section:properties}

In this section, we will provide bounds on
the sliced Cram\'er distance between a function and its
deformation in terms of the deformation size.
As described in Remark \ref{rmk:smoothness} below,
these bounds also imply that
the sliced Cram\'er distances
are locally smooth
with respect to the distance $\Delta$ between deformations
defined in Section \ref{sec:pushforwards}.
We will also extend these results
to the sliced Cram\'er distance between tomographic projections.
We will compare these bounds to those satisfied by
the Wasserstein and sliced Wasserstein distances
found in Theorem \ref{thm:wasserstein_projections}
and Theorem \ref{thm:sw_deformations}, respectively,
as well as to the Lebesgue distances.
Bounds for general deformations are
provided in Theorem \ref{thm:main_deformations} and Corollary \ref{cor:main_projections},
in Section \ref{section:robustness_general}. Sharper
bounds are then derived for more specific deformations
in Section \ref{section:sharper}.
In addition, Section \ref{section:convolutions} analyzes the behavior of
the sliced Cram\'er distances under convolution of the inputs,
stating a bound analagous to Theorem \ref{thm:wasserstein_convo}
and Corollary \ref{cor:sw_convo}.

\subsection{Growth under deformations}
\label{section:robustness_general}

We start with a general result that quantifies
sliced Cram\'er metrics' growth under deformations.

\begin{thm}
\label{thm:main_deformations}
Let $1 \le p \le \infty$.
Suppose that $A$ and $B$ are non-empty, bounded, open sets in $\R^d$.
Let $\Phi : B \to A$ be a $C^1$ bijection,
and $\eta$ be a probability measure over $\S^{d-1}$.
If $f$ is in $L^p(A)$,
then
\begin{align}
\label{eq:main1}
\SC_{\eta,p}(f,f_\Phi) \le
2^{(p-1)/p} \cdot \|f\|_{M_\eta^p} \cdot \varepsilon_\infty(\Phi)
\end{align}
and
\begin{align}
\label{eq:main2}
\SC_{\eta,p}(f,f_\Phi) \le
2^{(p-1)/p} \cdot \|f\|_{M^{p,\infty}} \cdot \varepsilon_{\eta,p}(\Phi).
\end{align}
If $f$ is in $L^1(A)$, then
\begin{align}
\label{eq:main3}
\SC_{\eta,p}(f,f_\Phi) \le \|f\|_{L^1} \cdot \varepsilon_{\eta,1}(\Phi)^{1/p}.
\end{align}

\end{thm}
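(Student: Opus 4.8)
The plan is to reduce all three inequalities to a single one-dimensional estimate, proved once for each direction $u$ and then averaged against $\eta$. Concretely, I would first establish the per-slice bound
\begin{align}
\cdist_p(\calP_u f, \calP_u f_\Phi) \le 2^{(p-1)/p}\, \varepsilon(\Phi,u)\, \|\calP_u(|f|)\|_{L^p(\R)},
\end{align}
valid for every unit vector $u$. Granting this, \eqref{eq:main1} and \eqref{eq:main2} follow by raising to the $p$-th power and integrating $d\eta(u)$: pulling $\varepsilon(\Phi,u)\le\varepsilon_\infty(\Phi)$ out of the integral and recognizing $\int_{\S^{d-1}}\|\calP_u(|f|)\|_{L^p}^p\, d\eta=\|f\|_{M_\eta^p}^p$ gives \eqref{eq:main1}, while pulling $\|\calP_u(|f|)\|_{L^p}\le\|f\|_{M^{p,\infty}}$ out instead and keeping $\int_{\S^{d-1}}\varepsilon(\Phi,u)^p\, d\eta=\varepsilon_{\eta,p}(\Phi)^p$ gives \eqref{eq:main2}.

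To prove the per-slice bound, write $\Psi=\Phi^{-1}$ and use $\cdist_p(\calP_u f,\calP_u f_\Phi)=\|\V(\calP_u f-\calP_u f_\Phi)\|_{L^p}$. Since $\calP_u g$ is the push-forward of $g$ under $y\mapsto\langle y,u\rangle$ and $f_\Phi=\Psi_\sharp f$, Fubini and the change-of-variables formula yield
\begin{align}
\V(\calP_u f-\calP_u f_\Phi)(x)=\int f(y)\bigl(\one[\langle y,u\rangle\le x]-\one[\langle \Psi(y),u\rangle\le x]\bigr)\,dy.
\end{align}
For fixed $y$ the integrand is supported where $x$ lies between $\langle y,u\rangle$ and $\langle \Psi(y),u\rangle$, an interval of length $|\langle y-\Psi(y),u\rangle|\le\varepsilon(\Phi,u)$ lying on one side of $\langle y,u\rangle$: to the right when the projected displacement $\langle y-\Psi(y),u\rangle$ is positive, to the left when it is negative. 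Splitting $\R^d$ into $Y^+$ and $Y^-$ according to this sign and taking absolute values gives the pointwise bound $|\V(\calP_u f-\calP_u f_\Phi)(x)|\le\int_x^{x+\varepsilon} g^+ + \int_{x-\varepsilon}^x g^-$, where $\varepsilon=\varepsilon(\Phi,u)$ and $g^\pm=\calP_u(|f|\one_{Y^\pm})$ are the projections of the forward- and backward-moving mass. Each term is a one-sided average over a window of length $\varepsilon$, so Young's inequality bounds its $L^p$ norm by $\varepsilon\|g^\pm\|_{L^p}$.

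The constant then comes from combining the two pieces. Since $g^+,g^-\ge 0$ and $g^++g^-=\calP_u(|f|)$, convexity of $t\mapsto t^p$ gives $a^p+b^p\le(a+b)^p$ for $a,b\ge0$, and the power-mean inequality gives $\|g^+\|_{L^p}+\|g^-\|_{L^p}\le 2^{(p-1)/p}\|g^++g^-\|_{L^p}=2^{(p-1)/p}\|\calP_u(|f|)\|_{L^p}$, which is exactly the per-slice bound. This splitting by the sign of the projected displacement — rather than the crude estimate by a two-sided $2\varepsilon$-window, which would only give the constant $2$ — is the crux of the argument and the source of the sharp factor $2^{(p-1)/p}$.

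Finally, \eqref{eq:main3} follows from a cruder route that happens to give a better constant: writing $H=\V(\calP_u f-\calP_u f_\Phi)$ and using $\|H\|_{L^p}^p\le\|H\|_{L^\infty}^{p-1}\|H\|_{L^1}$ together with $\|H\|_{L^\infty}\le\|f\|_{L^1}$ and $\|H\|_{L^1}\le\varepsilon(\Phi,u)\|f\|_{L^1}$ (the latter since each $y$ contributes over a set of measure at most $\varepsilon(\Phi,u)$) gives $\cdist_p(\calP_u f,\calP_u f_\Phi)^p\le\varepsilon(\Phi,u)\|f\|_{L^1}^p$; integrating $d\eta$ and using $\int_{\S^{d-1}}\varepsilon(\Phi,u)\,d\eta=\varepsilon_{\eta,1}(\Phi)$ produces \eqref{eq:main3}. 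I expect the main obstacle to be the rigorous justification of the push-forward/Fubini identity and the sign-splitting pointwise bound, including the change-of-variables for $f_\Phi=\Psi_\sharp f$ and the harmless treatment of the measure-zero set $\{\langle y-\Psi(y),u\rangle=0\}$; once this bookkeeping is in place, the analytic estimates are routine, and the $p=\infty$ case follows from the same computation with suprema replacing the integrals over $u$.
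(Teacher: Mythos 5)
Your proposal is correct, and while it shares the paper's overall architecture --- a per-slice one-dimensional bound $\cdist_p(\calP_u f,\calP_u f_\Phi)\le 2^{(p-1)/p}\,\varepsilon(\Phi,u)\,\|\calP_u(|f|)\|_{L^p}$, together with the cruder $\|f\|_{L^1}\,\varepsilon(\Phi,u)^{1/p}$, then $p$-th powers and averaging in $u$, exactly the structure of the paper's Lemma \ref{lem:projpert} --- your proof of the per-slice estimate itself takes a genuinely different route. The paper works on the dual side: it invokes the variational characterization of the Volterra norm (Proposition \ref{prop:variational}), tests against $G\in\AC_0$ with $\|G'\|_{L^q}\le 1$, and bounds $\left\|\sup_y|G(x)-G(\psi_1(x,y))|\right\|_{L^q}$ via H\"older applied to the kernel $\chi(x,y,t)$, the constant $2^{(p-1)/p}=2^{1/q}$ emerging from the asymmetric pair of estimates $\int\chi\,dt\le\varepsilon$ and $\int\sup_y\chi\,dx\le 2\varepsilon$. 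You instead stay primal: you write the CDF difference as $\int f(y)\bigl(\one[\langle y,u\rangle\le x]-\one[\langle\Psi(y),u\rangle\le x]\bigr)\,dy$, split the mass by the sign of the projected displacement, bound each one-sided window average by Young's convolution inequality, and recover the sharp constant from superadditivity of $t\mapsto t^p$ on nonnegative summands plus the power-mean inequality; the endpoints $p=1$ and $p=\infty$ fall out of the same computation, and your interpolation $\|H\|_{L^p}^p\le\|H\|_{L^\infty}^{p-1}\|H\|_{L^1}$ for \eqref{eq:main3} replaces the paper's Step 3 (which again goes through the dual with $\|g\|_{L^q}\le 1$). What each approach buys: yours is more elementary (no duality needed) and gives a transparent transport interpretation of the constant --- $2^{(p-1)/p}$ is exactly the price of mass moving in both directions along $u$, which also illuminates the paper's tightness example and why the monotone case of Theorem \ref{thm:deformation_1d} sheds the factor; the paper's dual framework, by contrast, is the engine it reuses verbatim for the sharper special-case bounds of Section \ref{section:sharper}, where only the estimate on $\int\sup_y\chi\,dx$ is swapped out.

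Two cosmetic slips, neither affecting validity. First, your side-labeling is reversed: if $\langle y-\Psi(y),u\rangle>0$ then $\langle\Psi(y),u\rangle<\langle y,u\rangle$, so the support interval lies to the \emph{left} of $\langle y,u\rangle$; this merely relabels $Y^{+}$ and $Y^{-}$. Second, the set $\{y:\langle y-\Psi(y),u\rangle=0\}$ need not have measure zero (consider a deformation that displaces a region of positive measure orthogonally to $u$), but it is still harmless: such $y$ contribute nothing to the CDF difference, and your power-mean step only requires $g^{+}+g^{-}\le\calP_u(|f|)$ rather than equality.
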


We record several observations about Theorem \ref{thm:main_deformations},
before stating a corollary and providing the proof.

\begin{rmk}
\label{rmk:switch}
Because $f = (f_\Phi)_{\Phi^{-1}}$
and $\varepsilon_{\eta,p}(\Phi) = \varepsilon_{\eta,p}(\Phi^{-1})$,
one can switch the roles of $f$ and $f_\Phi$,
and thereby replace the term $\|f\|_{M_\eta^{p}}$
by $\min\{\|f\|_{M_\eta^{p}}, \|f_\Phi\|_{M_\eta^{p}}\}$
in \eqref{eq:main1},
and replace $\|f\|_{M_\eta^{p,\infty}}$
by $\min\{\|f\|_{M_\eta^{p,\infty}}, \|f_\Phi\|_{M_\eta^{p,\infty}}\}$
in \eqref{eq:main2},
to obtain sharper bounds.
\end{rmk}

\begin{rmk}
It is not possible to replace the mean mixed norms
with the $L^1$ norm in \eqref{eq:main1} and \eqref{eq:main2}.
Indeed, when $d=1$, let $f$ be the point mass at $0$,
and let $\Phi(x) = x+\epsilon$;
then $\cdist_p(f,f_\Phi) = \epsilon^{1/p}$,
showing that a bound of the form
$\cdist_p(f,f_\Phi) \le K(p) \|f\|_{L^1} \varepsilon(\Phi)$
is not possible in general.

\end{rmk}

\begin{rmk}
\label{rmk:smoothness}
One may reinterpret Theorem \ref{thm:main_deformations}
as saying that, for fixed functions $f$ and $g$,
the mapping $\Phi \mapsto \SC(f_\Phi,g)$
is locally Lipschitz, with respect to the
metric $\Delta$ defined in Remark \ref{rmk:deformation_distance},
in a neighborhood of the identity.
Indeed, defining  $\mathrm{I}(x) = x$,
the triangle inequality implies
\begin{align}
|\SC_{\eta,p}(f_\Phi,g) - \SC_{\eta,p}(f,g)|
\le \SC_{\eta,p}(f_\Phi,f)
\le 2^{(p-1)/p} \cdot \|f\|_{M_\eta^p} \cdot \varepsilon_\infty(\Phi \circ \mathrm{I}^{-1})
= 2^{(p-1)/p} \cdot \|f\|_{M_\eta^p} \cdot \Delta(\Phi,\mathrm{I}).
\end{align}
Similarly, when $f$ is in $L^1$ the distances are locally
$1/p$-H\"older:
\begin{align}
|\SC_{\eta,p}(f_\Phi,g) - \SC_{\eta,p}(f,g)|
= \|f\|_{L^1} \cdot \Delta(\Phi,\mathrm{I})^{1/p}.
\end{align}

\end{rmk}

\begin{rmk}
\label{rmk:compare_wasserstein}
We compare the bounds from Theorem \ref{thm:main_deformations}
to the bounds
on the Wasserstein and sliced Wasserstein
distances described in Sections \ref{sec:wasserstein} and \ref{sec:sliced_distances}.
First, it is convenient to combine
the bounds \eqref{eq:main1} and \eqref{eq:main3}
to get the slightly weaker but simpler bound
\begin{align}
\label{eq:bounds_combined}
\SC(f,f_\Phi) \le
\begin{cases}
2^{(p-1)/p} \cdot \|f\|_{M_\eta^p} \cdot \varepsilon_\infty(\Phi), &\, \text{ if }
    \varepsilon_\infty(\Phi) \le
            \frac{1}{2}\left( \frac{\|f\|_{L^1}}{\|f\|_{M_\eta^p}}\right)^{p/(p-1)} \\
\|f\|_{L^1} \cdot \varepsilon_\infty(\Phi)^{1/p} , &\, \text{ otherwise}
\end{cases}.
\end{align}
Like Wasserstein, $\SC_{\eta,p}(f,f_\Phi)$ is bounded by a linear
function of the maximal displacement $\varepsilon_\infty(\Phi)$.
However, we note two differences between
the metrics.
First, the sliced Cram\'er bound
exhibits slower growth with respect to $\varepsilon_\infty(\Phi)$
when $\varepsilon_\infty(\Phi)$
is large, growing like $\varepsilon_\infty(\Phi)^{1/p}$
instead of $\varepsilon_\infty(\Phi)$.
Second, for small $\varepsilon_\infty(\Phi)$, while both bounds
are linear in $\epsilon_\infty(\Phi)$,
they differ in their dependence on $f$,
with the bound on the Wasserstein distance scaling with $\|f\|_{L^1}$
and the bound on the sliced Cram\'er distance scaling with $\|f\|_{M_\eta^p}$.
While the norms
$\|f\|_{L^1}$ and $\|f\|_{M_\eta^p}$ are generally not commensurable,
we observe that
if the support of $f$
has diameter $\delta > 0$, say, then $\|f\|_{L^1} \le \delta^{(p-1)/p} \|f\|_{M_\eta^p}$;
consequently, if $f$ is a probability density, then
\begin{align}
\wass_p(f,f_\Phi) \le \varepsilon_\infty(\Phi)
\le \delta^{(p-1)/p} \cdot \|f\|_{M_\eta^p} \cdot \varepsilon_\infty(\Phi),
\end{align}
which is smaller than the bound on $\SC_{\eta,p}(f,f_\Phi)$
when $\delta < 2$.

\end{rmk}

\begin{rmk}
\label{rmk:compare_translations}
We compare Theorem \ref{thm:main_deformations} with the $L^p$ distance,
in the case of translations.
Suppose $f$ is supported on a set of diameter $\delta$,
with $\|f\|_{L^p} = 1$,
and let $f_\epsilon(x) = f(x - \epsilon u)$,
where $u$ is a fixed unit vector.
Then for any $\epsilon > \delta$,
$\|f - f_\epsilon\|_{L^p} = 2^{1/p}$.
Therefore, close to $\epsilon = 0$, viewing
$\|f - f_\epsilon\|_{L^p}$
as a function of $\epsilon$,
its slope over $[0,\delta]$ is $2^{1/p} / \delta$,
which can be arbitrarily big when $\delta$
is small, i.e.\ when $f$ has a large oscillation.
We contrast this with the behavior of $\SC_{\eta,p}(f,f_\Phi)$.
Note that in the case of translations,
Theorem \ref{thm:translations} below shows that the factor $2^{(p-1)/p}$
in the bounds \eqref{eq:main1} and \eqref{eq:main2}
may be removed, and so the sliced Cram\'er distances satisfy
the simplified bound
\begin{align}
\label{eq:translations_combined}
\SC(f,f_\Phi) \le
\begin{cases}
\|f\|_{M_\eta^p} \cdot \epsilon, &\, \text{ if }
    \epsilon \le
            \left( \frac{\|f\|_{L^1}}{\|f\|_{M_\eta^p}}\right)^{p/(p-1)} \\
\|f\|_{L^1} \cdot \epsilon^{1/p} , &\, \text{ otherwise}
\end{cases}.
\end{align}
With this, we make two observations. First, since
\begin{math}
\|f\|_{L^1} \le \delta^{(p-1)/p} \cdot \|f\|_{M_\eta^p},
\end{math}
the transition between the $\epsilon$
and $\epsilon^{1/p}$ regimes
in \eqref{eq:translations_combined}
occurs at the translation size
\begin{align}
\epsilon =
\left( \frac{\|f\|_{L^1}}{\|f\|_{M_\eta^p}}\right)^{p/(p-1)}
\le \delta;
\end{align}
this is qualitatively
similar behavior to the Lebesgue distance,
which also changes behavior when $\epsilon > \delta$,
becoming constant.
However, the sliced Cram\'er distance
exhibits much slower (and hence smoother) growth
than the Lebesgue distance
in the small $\epsilon$ regime;
indeed,
we can bound the sliced Cram\'er distance,
for $f$ with $\|f\|_{L^p}=1$, by
\begin{align}
\SC_{\eta,p}(f, f_\epsilon) \le \|f\|_{M_\eta^p} \cdot \epsilon
\le \delta^{(d-1)(p-1)/p} \cdot \|f\|_{L^p} \cdot \epsilon
= \delta^{(d-1)(p-1)/p} \cdot \epsilon,
\end{align}
i.e. the slope is bounded by $\delta^{(d-1)(p-1)/p}$,
in contrast to the Lebesgue distance's slope of
$2^{1/p} / \delta$.

\end{rmk}

Theorem \ref{thm:main_deformations} is easily extended
to comparing tomographic projections of a function and its deformation.
This is of interest when measuring the distance between two 2D projections
of a 3D volume, such as in the analysis of images in cryo-electron microscopy.

\begin{cor}
\label{cor:main_projections}
Let $1 \le p \le \infty$.
Let $A$ and $B$ be  non-empty, bounded, open sets in $\R^D$,
$f$ be in $L^p(A)$,
$\Phi : B \to A$ be a $C^1$ bijection,
and
\begin{math}
f_\Phi(x) = f(\Phi(x)) |\det(\nabla \Phi(x))|
\end{math}
on $B$, and $0$ elsewhere.
For $d \le D$, let $\calQ$ denote the tomographic projection operator onto
a $d$-dimensional subspace of $\R^D$.
Then for any probability measure $\eta$ over $\S^{d-1}$,
\begin{align}
\label{eq:proj1}
\SC_{\eta,p}(\calQ f,\calQ f_\Phi)
\le 2^{(p-1)/p} \cdot \|\calQ (|f|)\|_{M_\eta^p} \cdot \varepsilon_\infty(\Phi),
\end{align}
and
\begin{align}
\label{eq:proj2}
\SC_{\eta,p}(\calQ f,\calQ f_\Phi)
\le \|f\|_{L^1} \cdot \varepsilon_\infty(\Phi)^{1/p}.
\end{align}

\end{cor}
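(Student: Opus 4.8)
The plan is to reduce the corollary to Theorem \ref{thm:main_deformations}, applied to $f$ in its \emph{ambient} space $\R^D$, by exploiting the fact that slicing a $d$-dimensional tomographic projection is the same as slicing the original function along an embedded direction. The only genuinely new ingredient is the commutation identity flagged (but deferred) in the proof of Theorem \ref{thm:sw_deformations}; everything else is a transport of measure followed by bookkeeping.

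First I would prove that identity: if $\calQ$ projects onto a $d$-dimensional subspace $W \subset \R^D$, and $u \in \S^{d-1}$ is identified with the corresponding unit vector $\tilde u \in W \subset \S^{D-1}$, then $\calP_u \calQ g = \calP_{\tilde u} g$ for every integrable $g$ on $\R^D$. Taking $\calQ$ to project onto the first $d$ coordinates and choosing the orthonormal complements compatibly, this is a Fubini computation: both sides integrate $g$ over the full orthogonal complement of $\tilde u$ in $\R^D$, namely the $\R^{d-1}$ directions inside $W$ together with the $\R^{D-d}$ directions annihilated by $\calQ$, and the tomographic projection does not depend on which orthonormal complement is chosen. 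I would record this identity for $g \in \{f, f_\Phi, |f|\}$.

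Next, let $\eta_\calQ$ be the push-forward of $\eta$ under the embedding $u \mapsto \tilde u$; this is a probability measure on $\S^{D-1}$ supported on the unit sphere of $W$. Using the identity and a change of variables in the defining integral of the sliced Cram\'er metric, $\SC_{\eta,p}(\calQ f, \calQ f_\Phi)^p = \int_{\S^{d-1}} \cdist_p(\calP_{\tilde u} f, \calP_{\tilde u} f_\Phi)^p \, d\eta(u) = \SC_{\eta_\calQ,p}(f, f_\Phi)^p$, so it suffices to bound the right-hand side by Theorem \ref{thm:main_deformations} with the theorem's dimension taken to be $D$ and its measure taken to be $\eta_\calQ$. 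The hypotheses of that theorem ($A,B \subset \R^D$ bounded open, $f \in L^p(A)$, $\Phi$ a $C^1$ bijection) are exactly those assumed here.

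For \eqref{eq:proj1} I would invoke \eqref{eq:main1} with $\eta_\calQ$, which produces the factor $\|f\|_{M_{\eta_\calQ}^p}$. Applying the same commutation identity to $|f|$, and using that $\calQ(|f|) \ge 0$ so that $|\calQ(|f|)| = \calQ(|f|)$, the mean mixed norm transforms as $\|f\|_{M_{\eta_\calQ}^p} = \|\calQ(|f|)\|_{M_\eta^p}$, giving the claimed bound. For \eqref{eq:proj2} I would invoke \eqref{eq:main3} with $\eta_\calQ$, obtaining $\|f\|_{L^1}\,\varepsilon_{\eta_\calQ,1}(\Phi)^{1/p}$, and then bound $\varepsilon_{\eta_\calQ,1}(\Phi) = \int_{\S^{d-1}} \varepsilon(\Phi,\tilde u)\, d\eta(u) \le \varepsilon_\infty(\Phi)$, since $\varepsilon(\Phi,v) \le \varepsilon_\infty(\Phi)$ for every unit vector $v$ and $\eta$ is a probability measure. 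The main obstacle is the commutation identity of the first step (it is the one point requiring an honest Fubini argument and care about the choice of orthonormal complement); once it is in place, the remainder is the transport of $\eta$ to $\eta_\calQ$ and the two norm/displacement identifications.
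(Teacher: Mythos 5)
Your proposal is correct and follows essentially the same route as the paper's proof: the same Fubini-based commutation identity $\calP_u \calQ h = \calP_{(u,0)} h$, the same push-forward of $\eta$ to a measure on $\S^{D-1}$ supported on the embedded sphere (the paper's $\wtilde\eta$ is your $\eta_\calQ$), the identification $\|f\|_{M_{\wtilde\eta}^p} = \|\calQ(|f|)\|_{M_\eta^p}$, and the bound $\varepsilon_{\wtilde\eta,1}(\Phi) \le \varepsilon_\infty(\Phi)$ to conclude via \eqref{eq:main1} and \eqref{eq:main3} of Theorem \ref{thm:main_deformations}. No gaps; your extra care about the independence of the choice of orthonormal complement is a point the paper leaves implicit.
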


\begin{proof}[Proof of Corollary \ref{cor:main_projections}]

Without loss of generality, suppose $\calQ$ projects onto the first
$d$ coordinates, that is,
\begin{align}
(\calQ f)(x) = \int_{\R^{D-d}} f(x,y) dy.
\end{align}

Let $u$ be a unit vector in $\R^d$, and
let $u^{(2)},\dots,u^{(d)}$ denote any orthonormal vectors
completing the basis, so that for $h$ on $\R^d$,
\begin{align}
(\calP_u h)(t) = \int_{\R^{d-1}} h(t u + s_2 u^{(2)} + \dots + s_{d} u^{(d)}) \, d s,
\end{align}
and so
\begin{align}
(\calP_u (\calQ h))(t)
&= \int_{\R^{d-1}} (\calQ h)(t u + s_2 u^{(2)} + \dots + s_{d} u^{(d)}) \, ds
\nonumber \\
&= \int_{\R^{d-1}} \int_{\R^{D-d}} h(t u + s_2 u^{(2)} + \dots + s_{d} u^{(d)},y) \,dy \,ds
\nonumber \\
&= (\calP_{(u,0)} h)(t),
\end{align}
which is the tomographic projection of $h$ onto the span of
$(u,0) \in \R^d \times \R^{D-d}$.

Denote by $\wtilde \eta$ the distribution
over the unit sphere $\S^{D-1}$, supported on
$\wtilde \S^{d-1} \equiv \{(u,0) \in \R^{d} \times \R^{D-d}: |u|=1\}$
and defined by
$d\wtilde \eta((u,0)) = d\eta(u)$ for $u \in \R^d$.
We have
\begin{align}
\SC_{\wtilde \eta,p}(f, f_\Phi)^p
&= \int_{\S^{D-1}} \cdist_p(\calP_v f, \calP_v f_\Phi)^p \, d\wtilde \eta(v)
\nonumber \\
&= \int_{\S^{d-1}} \cdist_p(\calP_{(u,0)} f, \calP_{(u,0)} f_\Phi)^p \, d\eta(u)
\nonumber \\
&= \int_{\S^{d-1}} \cdist_p(\calP_{u} \calQ f, \calP_{u} \calQ f_\Phi)^p \, d\eta(u)
\nonumber \\
&= \SC_{\eta,p}(\calQ f, \calQ f_\Phi)^p.
\end{align}

Furthermore,
\begin{align}
\|f\|_{M_{\wtilde \eta}^p}^p
&= \int_{\S^{D-1}} \|\calP_v (|f|)\|_{L^p}^p \, d\wtilde \eta(v)
\nonumber \\
&= \int_{\S^{d-1}} \|\calP_{(u,0)} (|f|) \|_{L^p}^p \, d\eta(u)
\nonumber \\
&= \int_{\S^{d-1}} \|\calP_{u} \calQ (|f|) \|_{L^p}^p \, d\eta(u)
\nonumber \\
&= \|\calQ(|f|)\|_{M_{\eta}^p}^p.
\end{align}
The inequality \eqref{eq:proj1}  then follows by applying
\eqref{eq:main1} in Theorem \ref{thm:main_deformations}
with the measure $\wtilde \eta$. The bound \eqref{eq:proj2}
follows from \eqref{eq:main3} and the fact that
$\varepsilon_{\wtilde \eta,1}(\Phi) \le \varepsilon_\infty(\Phi)$.

\end{proof}

The proof of Theorem \ref{thm:main_deformations}
is immediate from the following lemma:

\begin{lem}
\label{lem:projpert}
Let $1 \le p \le \infty$.
Let $A$ and $B$ be  non-empty, bounded, open sets in $\R^d$,
$f$ be in $L^p(A)$,
$\Phi : B \to A$ be a $C^1$ bijection
and
\begin{math}
f_\Phi(x) = f(\Phi(x)) |\det(\nabla \Phi(x))|
\end{math}
on $B$, and $0$ elsewhere.
Then for any $u \in \S^{d-1}$,
\begin{align}
\label{eq:projpert}
\cdist_p(\calP_u f, \calP_u f_\Phi)
\le \min\left\{2^{(p-1)/p} \cdot \|\calP_u (|f|)\|_{L^p}  \cdot \varepsilon(\Phi,u) , \
               \|f\|_{L^1} \cdot \varepsilon(\Phi,u)^{1/p}\right\}.
\end{align}
\end{lem}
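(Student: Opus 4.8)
The plan is to prove the lemma for a single direction $u$, since Theorem~\ref{thm:main_deformations} then follows by raising to the $p$-th power and integrating against $\eta$. Writing $h = \calP_u f - \calP_u f_\Phi$, so that $\cdist_p(\calP_u f, \calP_u f_\Phi) = \|\V h\|_{L^p}$, the first step is to obtain an explicit formula for $\V h$. From the definitions of $\calP_u$ and $\V$ one has the slicing identity $(\V \calP_u f)(x) = \int_{\langle y, u\rangle \le x} f(y)\, dy$, and applying the push-forward change of variables $z = \Phi(y)$ (which absorbs the Jacobian $|\det \nabla \Phi|$ in $f_\Phi$) gives the same expression for $f_\Phi$ with $\langle y,u\rangle$ replaced by $\langle \Psi(y), u\rangle$, where $\Psi = \Phi^{-1}$. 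Subtracting yields
\begin{align*}
(\V h)(x) = \int_A f(y)\bigl(\one[\langle y,u\rangle \le x] - \one[\langle \Psi(y),u\rangle \le x]\bigr)\, dy.
\end{align*}

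Setting $a(y) = \langle y, u\rangle$ and $b(y) = \langle \Psi(y), u\rangle$, the bracketed difference is $\pm 1$ on the interval $I(y)$ with endpoints $a(y), b(y)$ and vanishes elsewhere; the crucial structural facts are that $a(y)$ is \emph{one endpoint} of $I(y)$ and that $|I(y)| = |\langle y - \Psi(y), u\rangle| \le \varepsilon(\Phi,u)$. Hence $|(\V h)(x)| \le H(x)$, where $H(x) = \int_A |f(y)|\,\one[x \in I(y)]\, dy \ge 0$, and the problem reduces to bounding $\|H\|_{L^p}$ by each of the two quantities in the claimed minimum.

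The $\|f\|_{L^1}$ bound is the easier one: I would use the two elementary estimates $\|H\|_{L^1} = \int_A |f(y)|\,|I(y)|\, dy \le \varepsilon(\Phi,u)\,\|f\|_{L^1}$ and $\|H\|_{L^\infty} \le \|f\|_{L^1}$, and then interpolate via $\|H\|_{L^p} \le \|H\|_{L^1}^{1/p}\|H\|_{L^\infty}^{1-1/p}$ to obtain $\|f\|_{L^1}\,\varepsilon(\Phi,u)^{1/p}$.

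The sharper constant $2^{(p-1)/p}$ in the first bound is the main obstacle. Naively dominating $\one[x\in I(y)] \le \one[|x-a(y)| \le \varepsilon(\Phi,u)]$ and applying Young's inequality to the resulting symmetric convolution only produces the constant $2$; the gain comes from using that $a(y)$ is always an endpoint of $I(y)$. I would argue by $L^p$–$L^q$ duality: for $\psi \ge 0$ with $\|\psi\|_{L^q}\le 1$,
\begin{align*}
\int_\R H(x)\psi(x)\, dx = \int_A |f(y)|\Bigl(\int_{I(y)}\psi(x)\, dx\Bigr)\, dy \le \int_A |f(y)|\,(M\psi)(a(y))\, dy = \int_\R (\calP_u(|f|))(t)\,(M\psi)(t)\, dt,
\end{align*}
where $(M\psi)(t) = \max\bigl\{\int_t^{t+\varepsilon}\psi(s)\,ds,\ \int_{t-\varepsilon}^{t}\psi(s)\,ds\bigr\}$ with $\varepsilon = \varepsilon(\Phi,u)$, the one-sidedness of $I(y)$ and the monotonicity of $\psi\ge 0$ guaranteeing $\int_{I(y)}\psi \le (M\psi)(a(y))$, and the last equality being the layer-cake identity for the projection $\calP_u$. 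The decisive step is then to bound $\|M\psi\|_{L^q}$ by combining the two one-sided pieces \emph{inside} the $L^q$ norm rather than splitting them by the triangle inequality: using $\max\{c,d\}^q \le c^q + d^q$ pointwise together with Young's inequality applied to each one-sided averaging operator (whose kernel has $L^1$ norm $\varepsilon$) gives $\|M\psi\|_{L^q}^q \le 2\varepsilon^q\|\psi\|_{L^q}^q$, i.e.\ $\|M\psi\|_{L^q} \le 2^{1/q}\varepsilon\,\|\psi\|_{L^q}$. Since $2^{1/q} = 2^{(p-1)/p}$, Hölder's inequality yields $\int H\psi \le 2^{(p-1)/p}\,\varepsilon(\Phi,u)\,\|\calP_u(|f|)\|_{L^p}$, and taking the supremum over $\psi$ finishes the first bound; both bounds holding simultaneously gives their minimum.
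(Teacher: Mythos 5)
Your proof is correct, and it is essentially the dual twin of the paper's argument, organized from the opposite side of the pairing. The paper dualizes immediately via Proposition \ref{prop:variational}, testing $\calP_u f - \calP_u f_\Phi$ against primitives $G \in \AC_0$ and measuring the increments $G(x) - G(\psi_1(x,y)) = -\int g(t)\chi(x,y,t)\,dt$ through the indicator $\chi$ of the displacement interval; the two estimates that drive everything are $\int \chi\,dt \le \varepsilon$ in \eqref{eq:56400} and $\int \sup_y \chi\,dx \le 2\varepsilon$ in \eqref{eq:56401}. You instead stay primal: you compute $\V(\calP_u f - \calP_u f_\Phi)$ explicitly as an integral of indicator differences (a layer-cake formula the paper never writes down), majorize by the nonnegative $H$, and only then invoke plain $L^p$--$L^q$ duality for $H$. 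Your two key facts map exactly onto the paper's: $|I(y)| \le \varepsilon(\Phi,u)$ is \eqref{eq:56400}, and your maximal-operator bound $\|M\psi\|_{L^q} \le 2^{1/q}\varepsilon\|\psi\|_{L^q}$ is the dual form of \eqref{eq:56401} --- indeed, unwinding Young's inequality for the one-sided kernels reproduces the paper's H\"older computation line by line, and both proofs obtain $2^{1/q} = 2^{(p-1)/p}$ rather than $2$ by the same mechanism of keeping the two one-sided contributions inside the $q$-th power instead of splitting by the triangle inequality. What your packaging buys: the explicit formula for $\V\calP_u$ makes the geometric content (each mass element $f(y)\,dy$ contributes only on an interval anchored at $\langle y,u\rangle$ of length at most $\varepsilon$) more transparent, and your derivation of the second bound by interpolating $\|H\|_{L^p} \le \|H\|_{L^1}^{1/p}\|H\|_{L^\infty}^{1-1/p}$ is cleaner than the paper's Step 3, which re-runs the duality with $(p,q)=(1,\infty)$ and bounds $|G(x)-G(\psi_1(x,y))| \le \varepsilon^{1/p}$ directly. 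What the paper's packaging buys is uniformity: its $\chi$-based argument handles $p=1$ and $p=\infty$ by explicit separate cases, whereas your pointwise inequality $\max\{c,d\}^q \le c^q + d^q$ is a finite-$q$ statement, so you should add one line for $q=\infty$ (i.e.\ $p=1$, where $\|M\psi\|_{L^\infty} \le \varepsilon\|\psi\|_{L^\infty}$ trivially, and where in any case both entries of the minimum coincide and your direct $\|H\|_{L^1}$ estimate already suffices) and note that the $L^\infty$--$L^1$ duality used when $p=\infty$ is legitimate since $H \le \|f\|_{L^1}$ pointwise. These are cosmetic repairs, not gaps.
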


Theorem \ref{thm:main_deformations} follows easily from averaging each side over $u$.

\begin{proof}[Proof of Lemma \ref{lem:projpert}]

Without loss of generality, suppose $u = e_1 = (1,0,\dots,0)$;
then
\begin{align}
\varepsilon(\Psi,u) = \max_{(x,y) \in \R \times \R^{d-1}} |x - \psi_1(x,y)|.
\end{align}

For brevity, if $h : \R^d \to \R$ is a function of $d$ variables,
let
\begin{math}
\calP h = \calP_{e_1} h.
\end{math}
That is,
\begin{align}
(\calP h)(x) = \int_{\R^{d-1}} h(x,y) dy.
\end{align}

For $(x,y) \in \R \times \R^{d-1}$, let $I_{(x,y)}$ be the interval $[x,\psi_1(x,y)]$
when $x \le \psi_1(x,y)$,
and $[\psi_1(x,y),x]$ when $x > \psi_1(x,y)$;
and let $\chi(x,y,t)$ be $1$ if $t \in I_{(x,y)}$,
and $0$ otherwise;
that is, $\chi(x,y,t) = 1$
if either $x \le t \le \psi_1(x,y)$
or $\psi_1(x,y) \le t \le x$.

\paragraph{Step 1.}

We will show that for all $(x,y) \in \R \times \R^{d-1}$,
\begin{align}
\label{eq:56400}
\int \chi(x,y,t) \, dt \le \varepsilon(\Psi,u),
\end{align}
and for all $t$,
\begin{align}
\label{eq:56401}
\int \sup_{y} \chi(x,y,t) \, dx \le 2\varepsilon(\Psi,u).
\end{align}

For the first inequality, for fixed $(x,y)$, suppose
without loss of generality that $x \le \psi_1(x,y)$. Then
$\chi(x,y,t) = 1$ if and only if $x \le t \le \psi_1(x,y)$,
and so
\begin{align}
\int \chi(x,y,t) \, dt = |\psi_1(x,y) - x|
\le \varepsilon(\Psi,u),
\end{align}
which is \eqref{eq:56400}.

For the second inequality: for any $x$ and $t$, $\sup_{y} \chi(x,y,t) = 1$
if and only if there exists a vector $y$ such that
either $x \le t \le \psi_1(x,y)$ or $\psi_1(x,y) \le t \le x$.
In this case, since $|x - \psi_1(x,y)| \le \varepsilon(\Psi,u)$,
we must also have $|x-t| \le \varepsilon(\Psi,u)$, and so $x$ lies in the interval
$[t-\varepsilon(\Psi,u),t+\varepsilon(\Psi,u)]$ of length $2 \varepsilon(\Psi,u)$; hence
\begin{align}
\int \sup_{y} \chi(x,y,t) \, dx \le 2\varepsilon(\Psi,u),
\end{align}
which is \eqref{eq:56401}.

\paragraph{Step 2.}

We will prove that
\begin{align}
\label{eq:step2_bound}
\cdist_p(\calP_u f, \calP_u f_\Phi)
\le 2^{(p-1)/p} \cdot \|\calP_u (|f|)\|_{L^p} \cdot \varepsilon(\Phi,u).
\end{align}

Let $G \in \AC_0$, with derivative $g = G'$ satisfying
$\|g\|_{L^q} \le 1$.
Performing the change of variables $w = \Phi(x,y)$ gives
\begin{align}
\label{eq:401021-0}
\int_{\R} G(x) (\calP f_\Phi)(x) \, dx
&= \int_{\R} G(x) \int_{y:(x,y) \in B}
        f(\Phi(x,y)) |\det(\nabla\Phi(x,y))| \,dy \,dx
\nonumber \\
&= \int_{B} G(x)  f(\Phi(x,y)) |\det(\nabla\Phi(x,y))| \,dy \,dx
\nonumber \\
&= \int_{A} G(\psi_1(w)) f(w)  \,dw
\nonumber \\
&= \int_{\R} \int_{y: (x,y) \in A} G(\psi_1(x,y)) f(x,y)   \, dy \,dx
\end{align}
and similarly,
\begin{align}
\label{eq:401021-1}
\int_{\R} G(x) (\calP f)(x) \, dx
= \int_{\R} \int_{y: (x,y) \in A} G(x) f(x,y)   \, dy \,dx.
\end{align}
Combining \eqref{eq:401021-0} and \eqref{eq:401021-1},
and applying H\"{o}lder's inequality,
\begin{align}
\label{eq:3121031}
\int_{\R} G(x) ((\calP f)(x) - (\calP f_\Phi)(x)) \, dx
&= \int \int [G(x) - G(\psi_1(x,y))]  f(x,y) \, dy \, dx
\nonumber \\
&\le \int \left(\sup_{y} |G(x) - G(\psi_1(x,y))| \right) 
    \left(\int_{\R^{d-1}} |f(x,y)| \, dy \right) dx
\nonumber \\
&= \int \left(\sup_{y} |G(x) - G(\psi_1(x,y))| \right)  \calP(|f|)(x)  dx
\nonumber \\
&\le \|\calP(|f|)\|_{L^p}
    \cdot \left\| \sup_{y} |G(x) - G(\psi_1(x,y))| \right\|_{L^q(dx)},
\end{align}
where $1/p + 1/q = 1$.

We will show that
\begin{align}
\label{eq:50401011}
\left\| \sup_{y} |G(x) - G(\psi_1(x,y))| \right\|_{L^q(dx)}
\le 2^{(p-1)/p} \, \varepsilon(\Psi,u),
\end{align}
which yields \eqref{eq:step2_bound} by applying Proposition \ref{prop:variational}.
We have
\begin{align}
|G(x) - G(\psi_1(x,y))| = \left| \int_{x}^{\psi_1(x,y)} g(t) dt \right|
= \left| \int g(t) \chi(x,y,t) dt \right|.
\end{align}

Suppose temporarily that $1 < p,q < \infty$.
Applying H\"older's inequality and using
\eqref{eq:56400} and \eqref{eq:56401},
we get
\begin{align}
\int \left(\sup_{y} |G(x) - G(\psi_1(x,y))| \right)^{q} dx
&= \int \sup_{y} |G(x) - G(\psi_1(x,y))|^{q} dx
\nonumber \\
&= \int \sup_{y} \left| \int g(t) \chi(x,y,t) dt \right|^{q} dx
\nonumber \\
&\le \int  \sup_{y} \left(\int |g(t)|^q \chi(x,y,t) dt \right)
        \left( \int \chi(x,y,t) \, dt \right)^{q/p} \,dx
\nonumber \\
&\le \varepsilon(\Psi,u)^{q/p} \int  \sup_{y} \int |g(t)|^q \chi(x,y,t) \, dt \,dx
\nonumber \\
&\le \varepsilon(\Psi,u)^{q/p} \int \int |g(t)|^q \sup_{y}  \chi(x,y,t) \, dt \,dx
\nonumber \\
&= \varepsilon(\Psi,u)^{q/p} \int |g(t)|^q \int  \sup_{y}  \chi(x,y,t) \,dx \, dt
\nonumber \\
&\le 2 \varepsilon(\Psi,u)^{q/p+1} \int |g(t)|^q \, dt,
\end{align}
and so taking the $q$-th root we get the bound
\begin{align}
\left\| \sup_{y} |G(x) - G(\psi_1(x,y))| \right\|_{L^q(dx)}
\le 2^{1/q} \varepsilon(\Psi,u)^{1/p + 1/q} \|g\|_{L^q}
\le 2^{(p-1)/p} \varepsilon(\Psi,u).
\end{align}

Now suppose $p=1$ and  $q=\infty$.
Then from \eqref{eq:56400},
\begin{align}
\left\| \sup_{y} |G(x) - G(\psi_1(x,y))| \right\|_{L^\infty(dx)}
&=\sup_{x,y} |G(x) - G(\psi_1(x,y))|
\nonumber \\
&= \sup_{x,y}\left| \int g(t) \chi(x,y,t) dt \right|
\nonumber \\
&\le \|g\|_{L^\infty} \sup_{x,y} \int \chi(x,y,t) \, dt
\nonumber \\
&\le \|g\|_{L^\infty} \varepsilon(\Psi,u)
\nonumber \\
&\le \varepsilon(\Psi,u).
\end{align}

Finally, suppose $p=\infty$ and $q=1$.
Then from \eqref{eq:56401},
\begin{align}
\left\| \sup_{y} |G(x) - G(\psi_1(x,y))| \right\|_{L^1(dx)}
&= \int \sup_{y} |G(x) - G(\psi_1(x,y))| \, dx
\nonumber \\
&= \int \sup_{y}\left| \int g(t) \chi(x,y,t) dt \right| dx
\nonumber \\
&\le \int \int |g(t)| \sup_{y} \chi(x,y,t) \, dt \, dx
\nonumber \\
&= \int |g(t)| \int \sup_{y} \chi(x,y,t) \, dx \, dt
\nonumber \\
&\le \|g\|_{L^1} \sup_t \int \sup_{y} \chi(x,y,t) \,dx
\nonumber \\
&\le \|g\|_{L^1} 2 \varepsilon(\Psi,u)
\nonumber \\
&= 2 \varepsilon(\Psi,u).
\end{align}

This completes the proof of \eqref{eq:50401011},
and hence proves \eqref{eq:step2_bound}.

\paragraph{Step 3.}
To conclude the proof, we will prove the inequality
\begin{align}
\cdist_p(\calP_u f, \calP_u f_\Phi)
\le \|f\|_{L^1} \cdot \varepsilon(\Psi,u)^{1/p}.
\end{align}

Let $G \in \AC_0$, with derivative $g = G'$ satisfying
$\|g\|_{L^q} \le 1$.
From \eqref{eq:3121031}, taking $p=1$ and $q=\infty$,
\begin{align}
\int_{\R} G(x) ((\calP f)(x) - (\calP f_\Phi)(x)) \, dx
\le \|f\|_{L^1} \sup_{x,y}|G(x) - G(\psi_1(x,y))|,
\end{align}
and so by using Proposition \ref{prop:variational},
it is enough to show that for all $(x,y)$,
\begin{align}
|G(x) - G(\psi_1(x,y))| \le \varepsilon(\Psi,u)^{1/p}.
\end{align}

H\"{o}lder's inequality yields
\begin{align}
|G(x) - G(\psi_1(x,y))|
&= \left| \int g(t) \chi(x,y,t) dt \right|
\nonumber \\
&\le \|g\|_{L^q} \left(\int \chi(x,y,t)^p \, dt \right)^{1/p}
\nonumber \\
&\le \left(\int \chi(x,y,t) \, dt \right)^{1/p}
\nonumber \\
&\le \varepsilon(\Psi,u)^{1/p},
\end{align}
where the last inequality follows from \eqref{eq:56400}.
This completes the proof.

\end{proof}

Next, we will consider special cases for which quantitatively tighter bounds can be shown.

\subsection{Sharper bounds in special cases}
\label{section:sharper}

In this section, we consider
specific classes of deformations, and
prove sharper bounds
than \eqref{eq:main1} and \eqref{eq:main2} from Theorem \ref{thm:main_deformations}.

\subsubsection{Rotations in 2D}

We consider the case where $A = B = \DD \subset \R^2$, the open unit disc
centered at $(0,0)$.
If $\Phi$ is a rotation around the origin by angle $\theta$,
then the corresponding maximum displacement is $\varepsilon(\Phi) = 2 \sin(\theta/2)$,
and so the bound \eqref{eq:main1} in Theorem \ref{thm:main_deformations} is
\begin{align}
\SC_{\eta,p}(f,f_\Phi) \le 
2^{(p-1)/p} \cdot \|f\|_{M_\eta^p} \cdot 2 \sin(\theta/2).
\end{align}
(We do not consider the bound \eqref{eq:main2}, as for this
choice of $\Phi$ it is never stronger
than \eqref{eq:main1}.)

We can prove a sharper estimate:

\begin{thm}
\label{thm:sliced_rotations2D}
Let $1 \le p \le \infty$,
and $f: \R^2 \to \R$ be in $L^p(\DD)$.
Suppose $0 \le \theta < \pi$,
and define $f_\theta$ by
\begin{align}
f_\theta(x,y) = f(x \cos(\theta) + y \sin(\theta), y \cos(\theta) - x \sin(\theta)).
\end{align}
Then for
any probability distribution $\eta$ over $\S^{d-1}$,
\begin{align}
\SC_{\eta,p}(f,f_\theta)
\le \|f\|_{M_\eta^p} \cdot \Delta_p(\theta),
\end{align}
where
\begin{align}
\Delta_p(\theta) =
\begin{cases}
2 \sin(\theta/2) \cdot (2\cos(\theta/2))^{(p-1)/p}, &\text{ if } 0 \le \theta < \pi / 2 \\
2 \sin(\theta/2)^{1/p}, &\text{ if } \pi/2 \le \theta < \pi
\end{cases}.
\end{align}
\end{thm}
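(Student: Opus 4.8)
The plan is to mirror the proof of Lemma \ref{lem:projpert}, replacing the crude displacement estimates \eqref{eq:56400}--\eqref{eq:56401} by sharper ones tailored to a rotation. The first observation is that for a rotation every direction behaves identically: writing $\Phi$ for the rotation by $-\theta$, so that $f_\theta = f_\Phi$ and $\Psi = \Phi^{-1}$ is the rotation by $\theta$ with first component $\psi_1(x,y) = x\cos\theta - y\sin\theta$, one has $\varepsilon(\Phi,u) = 2\sin(\theta/2)$ for \emph{every} $u \in \S^1$, because $\DD$ is rotation-invariant. Since the per-slice bound I will prove depends on $u$ only through $\|\calP_u(|f|)\|_{L^p}$, it suffices (after averaging $d\eta(u)$ and recalling $\|f\|_{M_\eta^p}^p = \int_{\S^1}\|\calP_u(|f|)\|_{L^p}^p\,d\eta(u)$) to establish, for each fixed direction, the estimate $\cdist_p(\calP_u f,\calP_u f_\theta)\le\|\calP_u(|f|)\|_{L^p}\cdot\Delta_p(\theta)$. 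Fixing $u = e_1$, I would run Step 2 of the proof of Lemma \ref{lem:projpert} verbatim up to \eqref{eq:3121031}, reducing the problem via Proposition \ref{prop:variational} to proving $\bigl\|\sup_y|G(x)-G(\psi_1(x,y))|\bigr\|_{L^q(dx)}\le\Delta_p(\theta)$ for all $G\in\AC_0$ with $\|G'\|_{L^q}\le 1$.

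The entire gain over Lemma \ref{lem:projpert} comes from sharpening the transversal ``width'' estimate \eqref{eq:56401}. I keep the interval-length bound \eqref{eq:56400}, $\int\chi(x,y,t)\,dt = |\psi_1(x,y)-x|\le 2\sin(\theta/2)$, unchanged, and instead prove
\[
M(t) := \int \sup_y \chi(x,y,t)\,dx \;\le\; C(\theta),
\qquad
C(\theta) =
\begin{cases}
2\sin\theta, & 0\le\theta\le\pi/2,\\
2, & \pi/2\le\theta<\pi.
\end{cases}
\]
For $\theta\ge\pi/2$ this is the trivial bound, since $\sup_y\chi(\cdot,\cdot,t)$ is supported on the projection of $\DD$, an interval of length $2$. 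The substantive case is $\theta\le\pi/2$. Here I would first identify the on-set $\{x:\sup_y\chi(x,y,t)=1\}$: as $y$ ranges over $|y|\le\sqrt{1-x^2}$, the value $\psi_1(x,y)$ sweeps $[L(x),U(x)]$ with $U(x) = x\cos\theta+\sqrt{1-x^2}\sin\theta$ and $L(x) = -U(-x)$, so the on-set is $\{x<t\le U(x)\}\cup\{x>t\ge L(x)\}$ up to a null set. The clean way to evaluate its measure is the substitution $x=\cos\phi$, under which $U(\cos\phi)=\cos(\phi-\theta)$; writing $t=\cos\alpha$, the two pieces reduce to elementary integrals of $\sin\phi$ over explicit $\phi$-intervals.

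Carrying out this trigonometric bookkeeping is the crux. A short case analysis on the position of $\alpha$ relative to $\theta/2$ and $\theta$ shows that on the central range $\alpha\in[\theta,\pi/2]$ (equivalently $|t|\le\cos\theta$) the two pieces combine, after a product-to-sum identity, to $M(t) = 2\sin\theta\,\sin\alpha$, which is maximized at $\alpha=\pi/2$, i.e.\ $t=0$, giving exactly $2\sin\theta$; on the remaining ranges near $t=\pm 1$ one checks directly that $M(t)<2\sin\theta$. Thus $\max_t M(t) = 2\sin\theta$, attained at the center. I expect this to be the main obstacle: it is not conceptually deep, but it genuinely forces one to treat the sum of the two pieces jointly, since each piece alone can reach $2\sin(\theta/2)>\sin\theta$ and so cannot be bounded separately.

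Finally I would combine the two estimates exactly as in Step 2 of Lemma \ref{lem:projpert}: Hölder's inequality, with $2\sin(\theta/2)$ in the role of \eqref{eq:56400} and $C(\theta)$ in the role of the constant $2\varepsilon(\Psi,u)$ from \eqref{eq:56401}, yields $\bigl\|\sup_y|G-G\circ\psi_1|\bigr\|_{L^q}\le(2\sin(\theta/2))^{1/p}\,C(\theta)^{1/q}$. A direct simplification, using $2\sin\theta = 4\sin(\theta/2)\cos(\theta/2)$ and $1/q = (p-1)/p$, shows the right-hand side equals $\Delta_p(\theta)$ in both regimes, which proves the per-slice bound. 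The boundary exponents $p=1$ ($q=\infty$) and $p=\infty$ ($q=1$) are handled by the same separate arguments as at the end of the proof of Lemma \ref{lem:projpert}, now with the sharper constant $C(\theta)$; in particular $\Delta_1(\theta)=2\sin(\theta/2)$ recovers the tautological $\wass_1$ bound of the maximum displacement.
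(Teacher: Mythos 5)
Your proposal is correct and follows essentially the same route as the paper: reduce to a per-direction estimate, rerun Step 2 of Lemma \ref{lem:projpert} with \eqref{eq:56400} kept at $2\sin(\theta/2)$ and \eqref{eq:56401} sharpened to $\int \sup_y \chi(x,y,t)\,dx \le 2\sin\theta$ (resp.\ $2$), and combine via the identical H\"older computation to obtain $\Delta_p(\theta)$, with the same separate treatment of $p=1$ and $p=\infty$. The only difference is cosmetic: you evaluate $|S_t|$ by the substitution $x=\cos\phi$, $t=\cos\alpha$ (correctly yielding $\cos(\alpha-\theta)-\cos(\alpha+\theta)=2\sin\theta\sin\alpha$ on the central range), whereas the paper reaches the same bound $2s\sqrt{1-t^2}$, maximized at $t=0$, through the quadratic-root case analysis of Lemmas \ref{lem:50403001}--\ref{lem:50403003}.
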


The result follows from the following lemma:
\begin{lem}
\label{lem:rotations2D}
Using the notation from the statement of Theorem \ref{thm:sliced_rotations2D},
if $u$ is any unit vector in $\R^2$, then
\begin{align}
\cdist_{p}(\calP_u f,\calP_u f_\theta)
\le \|\calP_u (|f|) \|_{L^p} \cdot \Delta_p(\theta).
\end{align}
\end{lem}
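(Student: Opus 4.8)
The plan is to retrace the architecture of the proof of Lemma~\ref{lem:projpert}, replacing the two crude displacement estimates \eqref{eq:56400} and \eqref{eq:56401} by sharper, rotation-specific bounds that exploit the circular geometry of the disc $\DD$.

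First I would reduce to the case $u = e_1$. Fixing $u$ and choosing a planar rotation $S$ with $S u = e_1$, set $\widetilde f = f \circ S^{-1}$. Since all rotations of $\R^2$ commute, the rotated function satisfies $\widetilde{f_\theta} = (\widetilde f)_\theta$, and since $\DD$ is rotation-invariant we still have $\widetilde f \in L^p(\DD)$. Because projection norms are preserved under the matching rotation, $\|\calP_u(|f|)\|_{L^p} = \|\calP_{e_1}(|\widetilde f|)\|_{L^p}$ and $\cdist_p(\calP_u f, \calP_u f_\theta) = \cdist_p(\calP_{e_1}\widetilde f, \calP_{e_1}(\widetilde f)_\theta)$, so it suffices to treat $u = e_1$. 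In that case $\Psi = \Phi^{-1}$ is rotation by $\theta$, with $\psi_1(x,y) = x\cos\theta - y\sin\theta$.

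Next, exactly as in Step~2 of Lemma~\ref{lem:projpert}, I would invoke Proposition~\ref{prop:variational} to reduce the claim, via H\"older's inequality, to bounding $\big\| \sup_y |G(x) - G(\psi_1(x,y))| \big\|_{L^q(dx)}$ over all $G \in \AC_0$ with $\|G'\|_{L^q} \le 1$. Writing $G(x) - G(\psi_1(x,y)) = \int g(t)\chi(x,y,t)\,dt$ and applying H\"older in $t$ reduces the estimate to two scalar quantities: the interval length $L = \sup_{x,y} \int \chi(x,y,t)\,dt$ and the width $W = \sup_t \int \sup_y \chi(x,y,t)\,dx$, after which the same chain of inequalities as in Lemma~\ref{lem:projpert} gives $\big\| \sup_y |G - G\circ\psi_1| \big\|_{L^q} \le L^{1/p} W^{1/q}$. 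The interval length is simply the maximal displacement, $L = \sup_{(x,y)\in\DD} |x - \psi_1(x,y)| = 2\sin(\theta/2)$, unchanged from the general case.

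The crux, and the step I expect to be the main obstacle, is the sharper width bound, which must improve the general estimate $W \le 2\varepsilon = 4\sin(\theta/2)$. Here I would parametrize $x = \cos\phi$ with $\phi \in [0,\pi]$; then for $(x,y)\in\DD$ one has $y \in [-\sin\phi,\sin\phi]$, so $\psi_1(x,y)$ sweeps out $[\cos(\phi+\theta), \cos(\phi-\theta)]$, and the set of $t$ reachable from $x$ is the interval between $\cos\phi$ and this range. Setting $t = \cos\tau$, the reachability condition becomes $\phi \in [\tau-\theta, \tau+\theta]$ for interior angles, so the $x$-measure of reachable points equals $\int_{(\tau-\theta)\vee 0}^{(\tau+\theta)\wedge \pi} \sin\phi\,d\phi$. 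Maximizing over $\tau$ yields $2\sin\tau\sin\theta$, whose maximum $2\sin\theta$ is attained at $\tau = \pi/2$ (that is, $t = 0$) precisely when $\theta \le \pi/2$; for $\theta \ge \pi/2$ the range saturates $[0,\pi]$ and the measure caps at $2$. Hence $W = 2\sin\theta$ for $\theta < \pi/2$ and $W = 2$ for $\theta \ge \pi/2$. Substituting into $L^{1/p}W^{1/q}$ and using $\sin\theta = 2\sin(\theta/2)\cos(\theta/2)$ reproduces exactly $\Delta_p(\theta)$ in its two regimes; the endpoint cases $p = 1$ ($q = \infty$) and $p = \infty$ ($q = 1$) follow from the same $L^\infty$ and $L^1$ estimates used in Lemma~\ref{lem:projpert}, now with the sharp $L$ and $W$. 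The delicate point is verifying the reachable-set description near $\phi = 0, \pi$, where the ordering of $\cos\phi$ relative to $\cos(\phi\pm\theta)$ can flip; but since the maximizing angle $\tau = \pi/2$ is interior for $\theta \le \pi/2$, these boundary effects do not increase the supremum.
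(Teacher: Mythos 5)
Your proposal is correct and follows essentially the same route as the paper's proof: reduce to $u = e_1$, rerun the duality/H\"older machinery of Lemma \ref{lem:projpert} keeping the interval-length bound $2\sin(\theta/2)$, and replace \eqref{eq:56401} by the sharp width bound $\sup_t \int \sup_y \chi(x,y,t)\,dx \le 2\sin\theta$ for $\theta < \pi/2$ (and $\le 2$ for $\pi/2 \le \theta < \pi$), then combine as $L^{1/p}W^{1/q}$ to recover $\Delta_p(\theta)$. Your angular parametrization $x = \cos\phi$, $t = \cos\tau$ is merely a trigonometric repackaging of the paper's algebraic case analysis (Lemmas \ref{lem:50403001}--\ref{lem:50403003}): it yields the identical quantities $2\sin\tau\sin\theta = 2s\sqrt{1-t^2}$ and $1 - \cos(\tau+\theta) = 1 - ct + s\sqrt{1-t^2}$, maximized at $\tau = \pi/2$, i.e.\ $t = 0$, and your boundary caveat near $\phi = 0,\pi$ resolves exactly as you anticipate since only the containment $S_t \subseteq \{\phi \in [(\tau-\theta)\vee 0,\,(\tau+\theta)\wedge\pi]\}$ is needed for the upper bound.
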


Theorem \ref{thm:sliced_rotations2D} follows immediately
by taking the $p$-th power and averaging over all $u$.

\begin{proof}[Proof of Lemma \ref{lem:rotations2D}]

Without loss of generality, suppose $u = (1,0)$.
An identical proof to that of Lemma \ref{lem:projpert} may be applied
by replacing the bound
$\sup_t \int \sup_{y} \chi(x,y,t) dx \le 2\varepsilon(\Phi)$
from \eqref{eq:56401}
with the bound
\begin{align}
\int \sup_{y} \chi(x,y,t) dx \le
\begin{cases}
2 \sin(\theta), &\text{ if } 0 \le \theta < \pi / 2 \\
2, &\text{ if } \pi/2 \le \theta < \pi  
\end{cases}
\end{align}
for all $|t| < 1$. Indeed, when $0 \le \theta < \pi/2$, we can then replace
\eqref{eq:50401011}
with the upper bound
\begin{align}
\left\| \sup_{y} |G(x) - G(\psi_1(x,y))| \right\|_{L^q(dx)}
&\le (2 \sin(\theta/2))^{1/p} \cdot (2 \sin(\theta))^{1/q}
\nonumber \\
&= (2 \sin(\theta/2))^{1/p} \cdot (4 \sin(\theta/2)\cos(\theta/2))^{1/q}
\nonumber \\
&= 2 \sin(\theta/2) \cdot (2\cos(\theta/2))^{1/q}
\nonumber \\
&= 2 \sin(\theta/2) \cdot (2\cos(\theta/2))^{(p-1)/p}
\end{align}
whereas when $\pi/2 \le \theta < \pi$ the bound becomes
\begin{align}
\left\| \sup_{y} |G(x) - G(\psi_1(x,y))| \right\|_{L^q(dx)}
\le (2 \sin(\theta/2))^{1/p} \cdot 2^{1/q}
= 2 \sin(\theta/2)^{1/p}.
\end{align}

The bound $\int \sup_{y} \chi(x,y,t) dx \le 2$
is immediate, since the integrand is bounded by $1$ and
the integral is over $|x| \le 1$. Hence it remains to show
that
\begin{align}
\label{eq:491021}
\int \sup_{y} \chi(x,y,t) dx \le 2 \sin(\theta)
\end{align}
whenever $0 \le \theta < \pi / 2$.

Let $c = \cos(\theta)$ and $s = \sin(\theta)$.
Note that in this case, $c \ge 0$ and $s \ge 0$;
and the rotation $\Phi(x,y) = (cx + sy, cy - sx)$,
with inverse $\Psi(x,y) = (cx - sy, cy + sx)$.

Take $|t| < 1$, and suppose, without loss of generality,
that $0 \le t \le 1$. It is enough to show
\begin{align}
\label{eq:bound55011}
\int \sup_{y} \chi(x,y,t) \, dx \le
\begin{cases}
2s\sqrt{1-t^2}, &\text{ if } 0 \le t < c \\
1 - ct + s\sqrt{1-t^2}, &\text{ if } c \le t \le 1
\end{cases}.
\end{align}
Indeed, the right side of \eqref{eq:bound55011} is decreasing in $t$
and so is maximized when $t=0$,
which yields the desired bound \eqref{eq:491021}.

We now show \eqref{eq:bound55011}.
For $0 \le t \le 1$,
let $S_t$ denote the set of all $x$, $|x| \le 1$, satisfying $\sup_y \chi(x,y,t) = 1$.
Then $\int \sup_{y} \chi(x,y,t) \, dx = |S_t|$.

\begin{lem}
\label{lem:50403001}
Let $0 \le t \le 1$.

\begin{enumerate}

\item Suppose $t \le x$. Then $x \in S_t$ if and only if
$cx - s\sqrt{1 - x^2} \le t$.

\item Suppose $x \le t$. Then $x \in S_t$ if and only if
$t \le cx + s\sqrt{1 - x^2}$.

\end{enumerate}

\end{lem}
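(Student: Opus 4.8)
The plan is to reduce the membership condition ``$x \in S_t$'' to an elementary fact about the range of the first coordinate $\psi_1(x,y) = cx - sy$ of the inverse rotation $\Psi$, as $(x,y)$ ranges over the disc $\DD$ with the first coordinate $x$ held fixed. Everything then follows from a two-case comparison.

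First I would fix $x$ with $|x| < 1$ and describe the set of attainable values of $\psi_1$. Since the constraint $(x,y) \in \DD$ is exactly $|y| < \sqrt{1-x^2}$, and since $y \mapsto cx - sy$ is affine with $s \ge 0$ (as $0 \le \theta < \pi/2$), this map sweeps out the open interval
\begin{align*}
J_x = \left(cx - s\sqrt{1-x^2}, \; cx + s\sqrt{1-x^2}\right),
\end{align*}
attaining values near its infimum $cx - s\sqrt{1-x^2}$ as $y \to \sqrt{1-x^2}$ and near its supremum $cx + s\sqrt{1-x^2}$ as $y \to -\sqrt{1-x^2}$ (when $s=0$ the interval degenerates to the point $\{x\}$, and the argument below still reads correctly). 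By the definition of $\chi$, one has $\sup_y \chi(x,y,t) = 1$ precisely when $J_x$ contains a value $\psi_1$ such that $t$ lies in the closed interval with endpoints $x$ and $\psi_1$.

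Next I would split into the two cases. In Part 1, where $t \le x$, the point $t$ can lie between $x$ and some $\psi_1 \in J_x$ only if $\psi_1 \le t$ (the alternative $x \le t \le \psi_1$ forces $t = x$, a single value I would absorb into the boundary discussion below). Thus $x \in S_t$ exactly when $J_x$ meets $(-\infty, t]$, which, because $J_x$ is an interval with infimum $cx - s\sqrt{1-x^2}$, happens if and only if $cx - s\sqrt{1-x^2} \le t$, the claimed condition. Part 2, where $x \le t$, is symmetric: now I would require $J_x$ to meet $[t, \infty)$, i.e.\ its supremum must satisfy $t \le cx + s\sqrt{1-x^2}$.

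The only genuine subtlety --- and the step I would handle most carefully --- is the open/closed distinction at the endpoints: because $\DD$ is open, $J_x$ is an open interval, so the sharp conditions are the strict inequalities $cx - s\sqrt{1-x^2} < t$ and $t < cx + s\sqrt{1-x^2}$, with the degenerate value $t = x$ checked separately. For each fixed $t$, however, the discrepancy occurs only at the $x$ solving $cx \mp s\sqrt{1-x^2} = t$ (a quadratic, hence finitely many $x$) together with $x=t$, a set of measure zero. Since \ref{lem:50403001} is used only to evaluate $|S_t| = \int \sup_y \chi(x,y,t)\,dx$, replacing strict inequalities by non-strict ones leaves $|S_t|$ unchanged, so the stated form is exactly what is needed downstream.
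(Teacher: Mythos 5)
Your proof is correct and takes essentially the same approach as the paper's: for fixed $x$ you determine the range of $\psi_1(x,y) = cx - sy$ as $y$ varies over the disc slice, with extreme values at $y = \pm\sqrt{1-x^2}$, and then compare that range to $t$ in the two cases. You are in fact slightly more careful than the paper on one point---the paper's proof takes $y = \pm\sqrt{1-x^2}$, which lies on the boundary of the open disc $\DD$, whereas your endpoint discussion correctly observes that the strict-versus-nonstrict discrepancy affects $S_t$ only on a set of measure zero and hence leaves $|S_t|$, the only quantity used downstream, unchanged.
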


\begin{proof}
First suppose that $t \le x$.
If $x \in S_t$, then there exists $y$ with $\chi(x,y,t) = 1$, that is,
for which $(x,y) \in \DD$ and $\psi_1(x,y) = cx - sy \le t \le x$.
Since $cx - sy$ only gets smaller
as $y$ grows, we can always take $y = \sqrt{1 - x^2}$. The converse is immediate.

Next suppose that $x \le t$.
If $x \in S_t$, then there exists $y$ for which $(x,y) \in \DD$
and $x \le t \le cx - sy$.
Since $cx - sy$ only gets bigger
as $y$ shrinks, we can always take $y = -\sqrt{1 - x^2}$.
Again, the converse is immediate.

\end{proof}

\begin{lem}
\label{lem:50403002}
Let $0 \le t \le 1$. Then for all $x \in S_t$,  $x \ge ct - s\sqrt{1 - t^2}$.
\end{lem}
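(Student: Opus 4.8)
The plan is to feed off the characterization of $S_t$ in Lemma \ref{lem:50403001} and split according to whether $x \ge t$ or $x \le t$. In the regime $x \ge t$ the claimed bound is immediate: since $0 \le c \le 1$, $t \ge 0$, and $s\sqrt{1-t^2}\ge 0$, we have $ct - s\sqrt{1-t^2} \le ct \le t \le x$. So all the real content lies in the regime $x \le t$, where Lemma \ref{lem:50403001} identifies membership $x \in S_t$ with the single inequality $t \le cx + s\sqrt{1-x^2}$, and I must show this forces $x \ge ct - s\sqrt{1-t^2}$.

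In that regime I would rewrite the constraint as $s\sqrt{1-x^2} \ge t - cx$. If $t - cx \le 0$ the conclusion is immediate (here $c>0$ since $\theta < \pi/2$, so $cx \ge t \ge 0$ gives $x \ge 0$, whence $ct - s\sqrt{1-t^2} \le ct \le c^2 x \le x$). Otherwise both sides are nonnegative and I square to get $s^2(1-x^2) \ge (t-cx)^2$; expanding and substituting $s^2 = 1 - c^2$ collapses everything to the quadratic inequality $x^2 - 2ctx + (t^2 - s^2) \le 0$. The key algebraic point is that the discriminant of this quadratic simplifies exactly to $s^2(1-t^2)$, so its two roots are precisely $ct \pm s\sqrt{1-t^2}$. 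Since the parabola opens upward and is nonpositive exactly between its roots, $x$ must lie in $[\,ct - s\sqrt{1-t^2},\ ct + s\sqrt{1-t^2}\,]$, which yields $x \ge ct - s\sqrt{1-t^2}$ as desired.

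The only delicate point is the squaring step, which needs bookkeeping on the sign of $t - cx$; fortunately the discarded branch $t \le cx$ is trivial, so this is no real obstacle. For a cleaner packaging that also explains where the exact roots come from, I would instead substitute $x = \cos\phi$ and $t = \cos\alpha$ with $\phi,\alpha \in [0,\pi]$. Then $cx + s\sqrt{1-x^2} = \cos(\phi - \theta)$ and $ct - s\sqrt{1-t^2} = \cos(\alpha + \theta)$, so the constraint reads $\cos(\phi-\theta) \ge \cos\alpha$ and the goal reads $\cos\phi \ge \cos(\alpha+\theta)$. Because $0 \le \theta < \pi/2$ and $0 \le \alpha \le \pi/2$ keep all the relevant angles inside $[0,\pi]$ where cosine is even and monotone, the constraint gives $|\phi - \theta| \le \alpha$, hence $\phi \le \alpha + \theta$, and monotonicity of cosine on $[0,\pi]$ then delivers $\cos\phi \ge \cos(\alpha+\theta)$. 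Either route proves the lemma, with the substitution making transparent that the bound $ct - s\sqrt{1-t^2}$ is simply the image of the boundary point under a shift by the rotation angle $\theta$.
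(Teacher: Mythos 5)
Your first route is essentially identical to the paper's proof: the same case split into $x > t$ (immediate from $ct - s\sqrt{1-t^2} \le ct \le t < x$) and $x \le t$, followed by squaring $s\sqrt{1-x^2} \ge t - cx$ to obtain the quadratic $x^2 - 2ctx + t^2 - s^2 \le 0$ with roots $ct \pm s\sqrt{1-t^2}$. One small remark: your subcase $t - cx \le 0$ inside the regime $x \le t$ is vacuous except at equality, since $x \le t$ together with $0 \le c \le 1$ and $t \ge 0$ already forces $cx \le t$ (this is the one-line observation the paper makes before squaring), though your handling of it is correct in any case. Your trigonometric substitution $x = \cos\phi$, $t = \cos\alpha$ is a correct alternative the paper does not use, and it is arguably more illuminating: it identifies the roots $ct \pm s\sqrt{1-t^2}$ as $\cos(\alpha \mp \theta)$, so the bound is transparently the boundary point shifted by the rotation angle, and it replaces the sign bookkeeping of the squaring step with monotonicity of cosine on $[0,\pi]$.
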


\begin{proof}
We will break this into two cases, depending on whether $x \le t$
or $x > t$.

\textbf{Case 1.}  $x \le t$.
By Lemma \ref{lem:50403001}, $cx + s \sqrt{1 - x^2} \ge t$.
Therefore,
 $s \sqrt{1 - x^2} \ge t - cx$
and since we assume $x \le t$, $t-cx \ge 0$;
therefore, squaring each side gives
\begin{align}
s^2 - s^2 x^2 \ge t^2 + c^2 x^2 - 2ctx,
\end{align}
and hence
\begin{align}
x^2 - 2ctx + t^2 - s^2 \le 0,
\end{align}
and since the roots of the polynomial are $x = ct \pm s\sqrt{1 - t^2}$,
in particular it follows that $x \ge ct - s \sqrt{1 - t^2}$.

\textbf{Case 2.} $x > t$.
In this case, the result follows immediately from the inequality
$ct - s\sqrt{1 - t^2} \le ct \le t < x$.

\end{proof}

\begin{lem}
\label{lem:50403003}
Suppose that $0 \le t \le c$. Then all $x \in S_t$ satisfy $x \le ct + s \sqrt{1-t^2}$.
\end{lem}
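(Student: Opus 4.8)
The plan is to follow the structure of the proof of Lemma \ref{lem:50403002}, splitting the argument according to whether $x \le t$ or $t \le x$ and invoking in each case the appropriate characterization of membership in $S_t$ from Lemma \ref{lem:50403001}. Throughout I would write $c = \cos\theta$ and $s = \sin\theta$, both nonnegative since $0 \le \theta < \pi/2$, and recall that $\psi_1(x,y) = cx - sy$. Since every $x \in S_t$ is either $\le t$ or $\ge t$, it suffices to establish $x \le ct + s\sqrt{1-t^2}$ in each regime.

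First I would dispose of the case $x \le t$. Here the desired bound follows from the purely scalar inequality $t \le ct + s\sqrt{1-t^2}$, since then $x \le t \le ct + s\sqrt{1-t^2}$. This inequality is equivalent to $t(1-c) \le s\sqrt{1-t^2}$; squaring both (nonnegative) sides and using $s^2 = (1-c)(1+c)$ reduces it to $2t^2 \le 1 + c$, i.e.\ $t \le \cos(\theta/2)$. This is exactly where the hypothesis $t \le c$ enters: since cosine is decreasing on $[0,\pi]$, we have $t \le \cos\theta \le \cos(\theta/2)$, and the case is complete. This is the one place where the full strength $t \le c$, rather than merely $t \le 1$, is needed, which explains why this upper bound, unlike the lower bound of Lemma \ref{lem:50403002}, requires the extra hypothesis.

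The case $t \le x$ is the \emph{crux}. By part 1 of Lemma \ref{lem:50403001}, membership $x \in S_t$ is equivalent to $cx - s\sqrt{1-x^2} \le t$, that is $cx - t \le s\sqrt{1-x^2}$, and I want to deduce $x \le ct + s\sqrt{1-t^2}$. When $cx - t \ge 0$ I can square directly, obtaining the quadratic inequality $x^2 - 2ctx + (t^2 - s^2) \le 0$, whose discriminant simplifies to $4s^2(1-t^2)$ and whose roots are therefore $ct \pm s\sqrt{1-t^2}$; this forces $x \le ct + s\sqrt{1-t^2}$. The main obstacle is the remaining sign case $cx < t$, in which squaring $cx - t \le s\sqrt{1-x^2}$ is invalid because the left side is negative. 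I would handle it directly: $cx < t$ gives $x < t/c$, and a short computation (multiply through by $c$, use $1 - c^2 = s^2$, divide by $s$, and square) shows that $t/c \le ct + s\sqrt{1-t^2}$ holds precisely when $t \le c$, so once again $x < t/c \le ct + s\sqrt{1-t^2}$.

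Assembling the two cases then gives $x \le ct + s\sqrt{1-t^2}$ for every $x \in S_t$, which is the claim. I would also note a cleaner alternative for the case $t \le x$ that sidesteps the sign analysis: the function $g(x) = cx - s\sqrt{1-x^2}$ has $g'(x) = c + sx/\sqrt{1-x^2} \ge 0$ on $[0,1)$ and hence is increasing on $[0,1]$, and one checks that $g\bigl(ct + s\sqrt{1-t^2}\bigr) = t$; therefore $g(x) \le t$ immediately yields $x \le ct + s\sqrt{1-t^2}$ by monotonicity, with no case split required.
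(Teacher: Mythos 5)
Your proof is correct, and in the crux case $t \le x$ it takes a genuinely different route from the paper. The paper handles that case by observing that $g(x) = cx - s\sqrt{1-x^2}$ is increasing for the relevant $x$, solving the boundary equation $g(x) = t$ to get the roots $ct \pm s\sqrt{1-t^2}$, selecting the plus root because $x > t$, and concluding by monotonicity that nothing beyond $ct + s\sqrt{1-t^2}$ can lie in $S_t$ --- this is essentially the ``cleaner alternative'' you sketch at the end. Your main argument instead squares the defining inequality $cx - t \le s\sqrt{1-x^2}$ directly, splitting on the sign of $cx - t$: the nonnegative case gives the quadratic $x^2 - 2ctx + (t^2 - s^2) \le 0$ and hence the bound, and you close the negative case by the observation $x < t/c \le ct + s\sqrt{1-t^2}$, which is equivalent to $t \le c$ (your algebra here checks out, with the understanding that the division by $s$ is moot when $s=0$, since then $c=1$ and the subcase $cx < t \le x$ is vacuous). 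Your route is purely algebraic and makes visible a second point at which the hypothesis $t \le c$ is needed, at the cost of a sign-case split; the paper's route avoids the split but needs the monotonicity observation and the root selection. One small caveat about your closing remark: the claim that the monotonicity route involves ``no case split'' slightly undersells where $t \le c$ enters there, since verifying $g\bigl(ct + s\sqrt{1-t^2}\bigr) = t$ requires $\sqrt{1 - x_0^2} = c\sqrt{1-t^2} - st$ with the stated sign, and $c\sqrt{1-t^2} \ge st$ is again exactly the condition $t \le c$. The case $x \le t$ in your proposal is identical to the paper's, including the reduction to $2t^2 \le 1 + c$.
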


\begin{proof}
First, note that the assumption $0 \le t \le c$ implies that $t \le ct + s \sqrt{1-t^2}$.
Indeed, the latter inequality is equivalent to $t(1-c) \le s \sqrt{1-t^2}$,
and which is equivalent to $t^2 + t^2 c^2 - 2ct^2 \le s^2 - t^2 s^2$,
which is equivalent to $2t^2(1 - c)\le s^2 = 1-c^2 = (1-c)(1+c)$,
which is equivalent to $2t^2 \le 1+c$,
which is immediate from $0 \le t \le c \le 1$.

Now, let $x \in S_t$. If $x \le t$, then $x \le ct + s \sqrt{1-t^2}$
as well, by what we have just shown.
Therefore, we can suppose instead that $x > t$.
By Lemma \ref{lem:50403001}, $cx - s \sqrt{1 - x^2} \le t$.
Note that $cx - s \sqrt{1 - x^2}$ is an increasing function of $x$.
Hence, if $x > t$ satisfies
\begin{align}
\label{eq:max_St}
cx - s\sqrt{1 - x^2} = t,
\end{align}
then for all $x' > x$, we have $x' > t$ and $cx' - s\sqrt{1 - (x')^2} > t$,
and hence by Lemma \ref{lem:50403001} $x' \notin S_t$.
For $x > t$ satisfying \eqref{eq:max_St}, we have
\begin{align}
& cx - t = s \sqrt{1 - x^2}
\nonumber \\
\implies \, & c^2 x^2 + t^2 - 2ctx = s^2 - s^2 x^2
\nonumber \\
\implies \, & x^2 - 2ctx + t^2 - s^2 = 0
\nonumber \\
\implies \, & x = ct \pm s \sqrt{1-t^2}
\nonumber \\
\implies \, & x = ct + s \sqrt{1-t^2},
\end{align}
where the last implication is because $x > t$.
This completes the proof.

\end{proof}

From Lemmas \ref{lem:50403002} and \ref{lem:50403003},
when $0 \le t < c$, all $x \in S_t$ lie
between $ct - s\sqrt{1 - t^2}$ and $ct + s\sqrt{1 - t^2}$;
hence $|S_t| \le 2 s \sqrt{1-t^2}$.
On the other hand, when $t \ge c$,
by Lemma \ref{lem:50403002}
all $x \in S_t$ must lie
between $ct - s\sqrt{1 - t^2}$ and $1$;
hence $|S_t| \le 1 - ct + s \sqrt{1-t^2}$.
This concludes the proof of \eqref{eq:bound55011},
and hence of Lemma \ref{lem:rotations2D}.

\end{proof}

\subsubsection{Monotonically increasing deformations in 1D}

When $d=1$, the bounds \eqref{eq:main1} and \eqref{eq:main2}
in Theorem \ref{thm:main_deformations} become
\begin{align}
\cdist_p(f,f_\Phi) \le 2^{(p-1)/p} \cdot \|f\|_{L^p} \cdot \varepsilon(\Phi).
\end{align}
Now we show that the factor of $2^{(p-1)/p}$
can be removed
when the deformation $\Phi$
is monotonically increasing.

\begin{thm}
\label{thm:deformation_1d}

Let $1 \le p \le \infty$.
Let $I$ and $J$ be non-empty, bounded, open intervals in $\R$,
$f$ be in $L^p(I)$,
$\Phi : J \to I$ be a $C^1$ bijection with inverse $\Psi$
and $\Phi'(x) > 0$ on $J$,
and
\begin{math}
f_\Phi(x) = f(\Phi(x)) \Phi'(x))
\end{math}
on $J$, and $0$ elsewhere.
Then
\begin{align}
\label{eq:4012}
\cdist_{p}(f,f_\Phi) \le 
\|f\|_{L^p} \cdot \varepsilon(\Phi).
\end{align}
\end{thm}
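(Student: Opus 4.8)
The plan is to re-run the variational argument from the proof of Lemma \ref{lem:projpert}, but to exploit monotonicity of $\Phi$ in order to replace the factor-of-two bound \eqref{eq:56401} by a sharp bound with no constant. Since $d=1$ there is no transverse variable $y$ and the projection $\calP_u$ is the identity, so the bookkeeping simplifies considerably. Writing $\psi = \Psi = \Phi^{-1}$ and letting $\chi(x,t)$ be the indicator that $t$ lies between $x$ and $\psi(x)$, I would first invoke Proposition \ref{prop:variational} to write $\cdist_p(f,f_\Phi) = \sup\{\langle f - f_\Phi, G\rangle : G \in \AC_0,\ \|G'\|_{L^q}\le 1\}$, and then carry out the change of variables $w = \Phi(x)$ exactly as in \eqref{eq:401021-0}, which gives $\langle f - f_\Phi, G\rangle = \int [G(x) - G(\psi(x))]\, f(x)\,dx$. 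A single application of H\"older's inequality (using $\|\calP(|f|)\|_{L^p} = \|f\|_{L^p}$ in 1D) then reduces the theorem to the estimate $\| G(\cdot) - G(\psi(\cdot)) \|_{L^q} \le \varepsilon(\Phi)$.

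The crux, and the only place monotonicity enters, is the sharp displacement bound
\[
\int \chi(x,t)\,dx \le \varepsilon(\Phi)
\]
for every $t$, with no factor of $2$. I would prove this by identifying the set $S_t = \{x : \chi(x,t)=1\}$. Because $\Phi$, and hence $\psi$, is strictly increasing and continuous, the conditions $\psi(x)\ge t$ and $\psi(x)\le t$ each cut out a half-line in $x$; intersecting these with $\{x\le t\}$ and $\{x\ge t\}$ respectively shows that $S_t$ is a single interval. When $t$ lies in the range of $\psi$, this interval has endpoints $t$ and $\Phi(t)$, so its length is $|t-\Phi(t)|\le\varepsilon(\Phi)$; when $t$ lies outside the range of $\psi$, one endpoint is a boundary point of $I$ and the length is still at most $\varepsilon(\Phi)$ by applying $|x-\psi(x)|\le\varepsilon(\Phi)$ as $x$ approaches that boundary. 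This is precisely the improvement over Lemma \ref{lem:projpert}: in the general, possibly non-monotone case one can only localize $S_t$ to an interval of length $2\varepsilon$ about $t$, whereas monotonicity forces $S_t$ to lie on a single side of $t$, yielding length at most $\varepsilon$.

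With this sharp bound in hand, the rest is the H\"older/Fubini computation from Step 2 of Lemma \ref{lem:projpert}, run with the constant $1$ in place of $2$. For $1<p,q<\infty$, writing $|G(x)-G(\psi(x))| = |\int g(t)\chi(x,t)\,dt|$ and applying H\"older in $t$ together with $\int\chi(x,t)\,dt = |x-\psi(x)|\le\varepsilon(\Phi)$ gives $|G(x)-G(\psi(x))|^q \le \varepsilon(\Phi)^{q/p}\int |g(t)|^q\chi(x,t)\,dt$; integrating in $x$, using Fubini, and then applying the sharp bound $\int\chi(x,t)\,dx\le\varepsilon(\Phi)$ yields
\[
\|G(\cdot)-G(\psi(\cdot))\|_{L^q} \le \varepsilon(\Phi)^{1/p+1/q}\|g\|_{L^q} = \varepsilon(\Phi).
\]
The endpoint cases $p=1$ (so $q=\infty$) and $p=\infty$ (so $q=1$) follow from the same two one-line arguments as in Lemma \ref{lem:projpert}, now using whichever of $\int\chi\,dt\le\varepsilon(\Phi)$ or $\int\chi\,dx\le\varepsilon(\Phi)$ is required, each producing the clean constant $\varepsilon(\Phi)$. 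I expect the geometric identification of $S_t$, including the brief check for $t$ outside the range of $\psi$, to be the only real obstacle; everything downstream is a routine transcription of the earlier proof with an improved constant.
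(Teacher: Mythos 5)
Your proposal is correct and follows essentially the same route as the paper: reduce via Proposition \ref{prop:variational} and the change of variables to the sharp displacement bound $\sup_t \int \chi(x,t)\,dx \le \varepsilon(\Phi)$, prove that bound using monotonicity of $\Psi$ to force the set $S_t$ onto one side of $t$, and then rerun the H\"older/Fubini computation of Lemma \ref{lem:projpert} with the factor $2$ replaced by $1$. The only cosmetic difference is that you identify $S_t$ exactly as the interval between $t$ and $\Phi(t)$ (with a boundary check when $t \notin J$), whereas the paper localizes $S_t$ inside $[x^*,t]$ via an extremal-point argument without computing it; both yield the same estimate.
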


\begin{rmk}
To see that the monotonocity of $\Phi$ is required
for this sharper bound, consider the following example.
Fix $\eta > \delta > 0$, and let $f$ be defined by
\begin{align}
f(x)=
\begin{cases}
1 , &\text{ if } -\eta \le x < 0, \\
-1 , &\text{ if } 0 \le x \le \eta, \\
0 , &\text{ otherwise}.
\end{cases}
\end{align}
Let $\Phi : [-\delta,\delta] \to [-\eta,\eta]$
be defined by $\Phi(x) = -(\eta/\delta) x$.
Then
\begin{align}
f_\Phi(x)=
\begin{cases}
-\eta/\delta , &\text{ if } -\delta \le x < 0, \\
\eta/\delta , &\text{ if } 0 \le x \le \delta, \\
0 , &\text{ otherwise}.
\end{cases}
\end{align}
Then it is straightforward to verify that $\varepsilon(\Phi) = \eta + \delta$,
$\|f\|_{L^\infty} = 1$, and
$C_\infty(f,f_\Phi) = 2\eta$.
By taking $\delta \to 0$, we see that the bound
$C_\infty(f,f_\Phi) \le 2 \|f\|_{L^\infty} \varepsilon(\Phi)$
is tight.

\end{rmk}

\begin{proof}[Proof of Theorem \ref{thm:deformation_1d}]

Let $I_x$ be the interval $[x,\Psi(x)]$ if $x \le \Psi(x)$, and
$[\Psi(x),x]$ if $\Psi(x) \le x$.
Let $\chi(x,t)$ be $1$ if $t \in I_x$, and $0$ otherwise.
Then an identical proof to that of Lemma \ref{lem:projpert} may be applied
if we show that
\begin{align}
\sup_{t} \int \chi(x,t) dx \le \varepsilon(\Phi),
\end{align}
in place of the bound \eqref{eq:56401}.

Take any $t \in I$, and  suppose that there is some
$x \le t$ with $t \in I_x$; note that for such $x$, $I_x = [x,\Psi(x)]$,
and so $x \le \Psi(x)$. Let $x^*$ be the smallest such $x$.
Then $x^* \le t \le \Psi(x^*)$. We claim that
for all $x > t$, $t \notin I_x$.
Indeed, since $\Psi$ is increasing and $x > t \ge x^*$,
we have $\Psi(x) > \Psi(x^*) \ge t$.
Since both $x > t$ and $\Psi(x) > t$, $t$ does not lie in $I_x$, as claimed.

Consequently, all $x$ for which $t$ lies in $I_x$
are contained inside the interval $[x^*,t]$.
Since $x^* \le t \le \Psi(x^*)$ and $|x^* - \Psi(x^*)| \le \varepsilon(\Phi)$,
it follows that $|t - x^*| \le \varepsilon(\Phi)$ too. 
Furthermore, if $x > t$, then $\chi(x,t) = 0$ since $t \notin I_x$;
and since $x^*$ is the smallest $x$ for which $t \in I_x$,
if $x < x^*$ then $t \notin I_x$, hence $\chi(x,t) = 0$.
Therefore,
\begin{align}
\int \chi(x,t) dx \le \int_{x^*}^{t} 1 dx = |t-x^*| \le \varepsilon(\Phi).
\end{align}

Analogous reasoning yields the same bound in the case that
there exists $x \ge t$ with $t \in I_x$.
This completes the proof.

\end{proof}

\subsubsection{Translations}

We consider the case where $\Phi(x) = x + v$,
where $v$ is a fixed vector. For simplicity, we
only describe the case where $\eta$ is the uniform measure,
though the results easily generalize.
In this case, the bounds \eqref{eq:main1}
and \eqref{eq:main2}
from Theorem \ref{thm:main_deformations}
are, respectively,
\begin{align}
\SC_p(f,f_\Phi) \le 2^{(p-1)/p} \cdot \|f\|_{M^{p}} \cdot |v|
\end{align}
and
\begin{align}
\SC_p(f,f_\Phi) \le 2^{(p-1)/p} \cdot K_p \cdot \|f\|_{M^{p,\infty}} \cdot |v|,
\end{align}
where
\begin{align}
\label{eq:kp_defined}
K_p = \left( \int_{\S^{d-1}} |u_1|^p \, du \right)^{1/p}.
\end{align}

We show that the factor $2^{(p-1)/p}$ can be removed:

\begin{thm}
\label{thm:translations}
Let $1 \le p \le \infty$.
Let $A$ be a non-empty, bounded, open set in $\R^d$, and $f$ be in $L^p(A)$.
For a fixed vector $v$,
let
\begin{math}
f_v(x) = f(x+v).
\end{math}
Then
\begin{align}
\label{eq:500504}
\SC_{p}(f,f_v)
\le \|f\|_{M^{p}} \cdot | v |
\end{align}
and
\begin{align}
\label{eq:500505}
\SC_{p}(f,f_v)
\le K_p \cdot \|f\|_{M^{p,\infty}} \cdot | v |,
\end{align}
where $K_p$ is defined in \eqref{eq:kp_defined}.
\end{thm}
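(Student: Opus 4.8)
The plan is to reduce the $d$-dimensional problem to the one-dimensional monotone case already settled in Theorem \ref{thm:deformation_1d}, exploiting the fact that a translation interacts with slicing in a particularly simple way. The central observation is that slicing a translate equals translating the slice: for any unit vector $u \in \S^{d-1}$,
\begin{align}
(\calP_u f_v)(t) = (\calP_u f)(t + \langle u, v\rangle).
\end{align}
First I would verify this by completing $u$ to an orthonormal basis $u, u^{(2)}, \dots, u^{(d)}$ and decomposing $v = \langle u, v\rangle u + \sum_{j\ge 2} \langle u^{(j)}, v\rangle u^{(j)}$. Substituting $f_v(x) = f(x+v)$ into the definition of $\calP_u$ and translating the $(d-1)$ integration variables $s_j \mapsto s_j + \langle u^{(j)}, v\rangle$ (which preserves Lebesgue measure) absorbs the component of $v$ orthogonal to $u$, leaving only a shift by $\langle u, v\rangle$ in the $u$-direction. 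This commutation identity is essentially the only step requiring care.

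Given the identity, $\calP_u f_v$ is exactly the one-dimensional translate of $g := \calP_u f$ by $c := \langle u, v\rangle$. Since $t \mapsto t + c$ is a monotonically increasing $C^1$ bijection with displacement $|c|$, Theorem \ref{thm:deformation_1d} applies directly and yields, crucially \emph{without} the factor $2^{(p-1)/p}$,
\begin{align}
\cdist_p(\calP_u f, \calP_u f_v) \le \|\calP_u f\|_{L^p} \cdot |\langle u, v\rangle| \le \|\calP_u(|f|)\|_{L^p} \cdot |\langle u, v\rangle|,
\end{align}
where the second inequality uses the pointwise bound $|\calP_u f| \le \calP_u(|f|)$. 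For $p = \infty$ the identical bound holds, reading $\cdist_\infty$ and $\|\cdot\|_{L^\infty}$ throughout. This per-direction estimate is the analogue, for translations, of Lemma \ref{lem:projpert}, but sharpened by appeal to the monotone one-dimensional result; the removal of the factor $2^{(p-1)/p}$ is inherited entirely from that theorem.

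It then remains to average. Raising the displayed inequality to the $p$-th power and integrating against the uniform measure $\eta$ gives
\begin{align}
\SC_p(f, f_v)^p \le \int_{\S^{d-1}} \|\calP_u(|f|)\|_{L^p}^p \, |\langle u, v\rangle|^p \, d\eta(u).
\end{align}
For \eqref{eq:500504} I would bound $|\langle u, v\rangle| \le |v|$ by Cauchy--Schwarz, pull $|v|^p$ out of the integral, and recognize the remaining factor as $\|f\|_{M^p}^p$; taking $p$-th roots gives the claim. For \eqref{eq:500505} I would instead bound $\|\calP_u(|f|)\|_{L^p} \le \|f\|_{M^{p,\infty}}$, pull it out, and evaluate $\int_{\S^{d-1}} |\langle u, v\rangle|^p \, d\eta(u)$: writing $v = |v|\, w$ with $|w|=1$ and using the rotational invariance of the uniform measure, this integral equals $|v|^p \int_{\S^{d-1}} |u_1|^p \, d\eta(u) = |v|^p K_p^p$ by the definition \eqref{eq:kp_defined}, yielding \eqref{eq:500505}. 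The $p=\infty$ cases follow by replacing the integrals with suprema over $u$, noting that $K_\infty = 1$ so that both bounds coincide. Beyond the commutation identity and this elementary symmetry evaluation, no step presents any real obstacle.
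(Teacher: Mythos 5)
Your proposal is correct and follows essentially the same route as the paper's proof: the commutation identity $(\calP_u f_v)(t) = (\calP_u f)(t + \langle v,u\rangle)$ (which the paper dismisses as ``a straightforward calculation''), reduction to the sharp monotone 1D bound of Theorem \ref{thm:deformation_1d}, and then the two averaging arguments over $u$. If anything, you are slightly more careful than the paper, by verifying the identity explicitly and inserting the pointwise bound $|\calP_u f| \le \calP_u(|f|)$ needed to match the definition of the mean mixed norm.
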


\begin{rmk}
When $p=1$, $\SC_1(f,f_v) = \SW_1(f,f_v)$,
and so the bound \eqref{eq:500505} when $p=1$ and $d=2$
matches the $p=1$ case of Theorem 2 of \cite{shi2025fast}.
\end{rmk}

\begin{proof}[Proof of Theorem \ref{thm:translations}]
By a straightforward calculation, for any $u \in \S^{d-1}$,
\begin{math}
(\calP_u f_v)(t) = (\calP_u f)(t + \langle v,u\rangle).
\end{math}
Consequently, by Theorem \ref{thm:deformation_1d},
\begin{align}
\cdist_p(\calP_u f, \calP_u f_v) \le \|\calP_u f\|_{L^p} \cdot |\langle v,u\rangle|.
\end{align}
Then
\begin{align}
\SC_p(f,f_v)^p
\le \int_{\S^{d-1}} \|\calP_u f\|_{L^p}^p \cdot |\langle v,u\rangle|^p \, du
\le |v|^p \cdot \int_{\S^{d-1}} \|\calP_u f\|_{L^p}^p  \, du
\le \|f\|_{M^{p}} \cdot |v|^p,
\end{align}
which is \eqref{eq:500504}; and
\begin{align}
\SC_p(f,f_v)^p \le \sup_{u \in \S^{d-1}} \|\calP_u f\|_{L^p}^p
    \cdot \int_{\S^{d-1}} |\langle v,u\rangle|^p \, du
\le K_p \cdot \|f\|_{M^{p,\infty}}^p \cdot |v|^p,
\end{align}
proving \eqref{eq:500505}.

\end{proof}

\subsubsection{Dilations}

Suppose $\BB\subset \R^{d}$ is the open unit ball in $\R^d$
centered at $0$.
Let $\Phi(w) = \alpha w$, where $\alpha > 1$. Then $\varepsilon(\Phi) = \alpha-1$,
and so bound \eqref{eq:main1} Theorem \ref{thm:main_deformations} is
\begin{align}
\SC_{\eta,p}(f,f_\Phi) \le 
2^{(p-1)/p} \cdot \|f\|_{M_\eta^p} \cdot ( \alpha - 1).
\end{align}
(We do not consider the bound \eqref{eq:main2}, as for this
choice of $\Phi$ it is never stronger
than \eqref{eq:main1}.)

We can prove a sharper estimate:

\begin{thm}
\label{thm:sliced_dilations}
Let $1 \le p \le \infty$,
and let $f$ be in $L^p(\BB)$.
Suppose $\alpha > 1$, and define $f_\alpha$ by
\begin{math}
f_\alpha(w) = \alpha f(\alpha w) .
\end{math}
Then for
any probability distribution $\eta$ over $\S^{d-1}$,
\begin{align}
\SC_{\eta,p}(f,f_\alpha)
\le \|f\|_{M_\eta^p} \cdot \frac{\alpha-1}{\alpha^{(p-1)/p}}.
\end{align}
\end{thm}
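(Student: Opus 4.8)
The plan is to reduce the statement to a one-dimensional estimate along each slice, exactly as Theorem~\ref{thm:main_deformations} is reduced to Lemma~\ref{lem:projpert} and Theorem~\ref{thm:sliced_rotations2D} to Lemma~\ref{lem:rotations2D}. First I would record the effect of the dilation on a tomographic projection: the change of variables that rescales the integration variables in the definition of $\calP_u$ gives, for every unit vector $u$,
\[
(\calP_u f_\alpha)(t) = \alpha\,(\calP_u f)(\alpha t),
\]
so that each slice $\calP_u f_\alpha$ is precisely the one-dimensional dilation of the slice $\calP_u f$. Since $f \in L^p(\BB)$, the projection $g := \calP_u f$ is supported in $(-1,1)$, and $\calP_u f_\alpha$ in $(-1/\alpha,1/\alpha) \subset (-1,1)$, so both may be compared through the Volterra operator on $L^1(-1,1)$. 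Granting the one-dimensional bound $\cdist_p(\calP_u f, \calP_u f_\alpha) \le \|\calP_u(|f|)\|_{L^p} \cdot (\alpha-1)/\alpha^{(p-1)/p}$ for each $u$, the theorem follows by raising to the $p$-th power, averaging against $\eta$, using $|\calP_u f| \le \calP_u(|f|)$, and recognizing $\|f\|_{M_\eta^p}$ --- the same averaging that passes from Lemma~\ref{lem:projpert} to Theorem~\ref{thm:main_deformations}.

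The heart of the proof is thus the one-dimensional inequality
\[
\cdist_p(g, g_\alpha) \le \|g\|_{L^p} \cdot \frac{\alpha-1}{\alpha^{(p-1)/p}}, \qquad g_\alpha(t) = \alpha\, g(\alpha t),
\]
which I would establish by following the template of Lemma~\ref{lem:projpert}, incorporating the monotonicity refinement of Theorem~\ref{thm:deformation_1d}. Writing $\psi(t) = t/\alpha$, Proposition~\ref{prop:variational} reduces the task to bounding $\langle g - g_\alpha, G\rangle$ over $G \in \AC_0$ with $\|G'\|_{L^q} \le 1$; a change of variables rewrites this as $\int g(x)\,[G(x) - G(\psi(x))]\,dx$, and H\"older's inequality bounds it by $\|g\|_{L^p} \cdot \|\,G(\cdot) - G(\psi(\cdot))\,\|_{L^q}$. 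Introducing the indicator $\chi(x,t)$ of the interval between $x$ and $\psi(x)$, the remaining estimate on $\|G - G\circ\psi\|_{L^q}$ is driven, exactly as in Step~2 of Lemma~\ref{lem:projpert}, by the two integrals $\int \chi(x,t)\,dt$ (playing the role of \eqref{eq:56400}) and $\sup_t \int \chi(x,t)\,dx$ (playing the role of \eqref{eq:56401}).

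The sharper constant is gained entirely in the second of these. For the displacement integral I would use $\int \chi(x,t)\,dt \le \varepsilon(\Phi) = \alpha - 1$. The key step is to replace the generic bound $\sup_t \int \chi\,dx \le 2\varepsilon(\Phi)$ of \eqref{eq:56401} by the sharp estimate
\[
\sup_t \int \chi(x,t)\,dx \le \frac{\alpha-1}{\alpha}.
\]
Because the dilation is monotone, for each fixed $t > 0$ the set $\{x : \chi(x,t) = 1\}$ is the single interval $[t, \min(\alpha t, 1)]$ --- no factor of $2$ is lost, just as in Theorem~\ref{thm:deformation_1d} --- and the truncation at the support boundary $x \le 1$ makes its length $\min(\alpha t, 1) - t$, which is maximized at $t = 1/\alpha$ with value $(\alpha-1)/\alpha$. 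Feeding $\int \chi\,dt \le \alpha - 1$ and $\sup_t \int \chi\,dx \le (\alpha-1)/\alpha$ into the H\"older/Fubini computation of Step~2 gives $\|G - G\circ\psi\|_{L^q} \le (\alpha-1)^{1/p}\big((\alpha-1)/\alpha\big)^{1/q} = (\alpha-1)/\alpha^{(p-1)/p}$, as claimed. I expect this sharp evaluation of $\sup_t \int \chi\,dx$ --- in particular, correctly accounting for the boundary truncation, which is exactly what produces the $1/\alpha$ saving --- to be the main obstacle, precisely as the geometric computation of Lemmas~\ref{lem:50403001}--\ref{lem:50403003} is the crux in the rotation case; the endpoint exponents $p = 1$ and $p = \infty$ would be treated separately, as in Lemma~\ref{lem:projpert}.
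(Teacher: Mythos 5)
Your proposal is correct and follows essentially the same route as the paper's proof (Lemma \ref{lem:dilations}): reduce to the 1D dilation on each slice, rerun the Step~2 machinery of Lemma \ref{lem:projpert} with $\int \chi\,dt \le \alpha-1$ kept and the factor-of-$2$ bound \eqref{eq:56401} replaced by the sharp single-interval estimate $\sup_t \int \sup_y \chi\,dx \le (\alpha-1)/\alpha$, then combine via H\"older as $(\alpha-1)^{1/p}\bigl((\alpha-1)/\alpha\bigr)^{1/q}$ and average over $u$. The only (immaterial) difference is a mirror-image parametrization of the deformation: the paper takes $S_t = (t/\alpha, t)$, maximized at $t = 1$, whereas you take $S_t = [t, \min(\alpha t, 1)]$, maximized at $t = 1/\alpha$ via the support truncation --- the same geometry and the same bound.
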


The result follows from the following lemma:
\begin{lem}
\label{lem:dilations}
Using the notation from the statement of Theorem \ref{thm:sliced_dilations},
\begin{align}
\cdist_{p}(\calP_u f,\calP_u f_\alpha)
\le \|\calP_u (|f|)\|_{L^p} \cdot \frac{\alpha-1}{\alpha^{(p-1)/p}}.
\end{align}
\end{lem}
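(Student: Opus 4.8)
The plan is to follow the structure established in the proof of Lemma~\ref{lem:projpert}, since Theorem~\ref{thm:sliced_dilations} parallels the rotation case (Theorem~\ref{thm:sliced_rotations2D}) whose lemma reused that machinery. Without loss of generality take $u = e_1$, so that $\calP = \calP_{e_1}$ integrates out all but the first coordinate. The deformation is $\Phi(w) = \alpha w$ with inverse $\Psi(w) = w/\alpha$, hence $\psi_1(x,y) = x/\alpha$, which crucially does \emph{not} depend on $y$. This is the feature that should let me beat the generic bound of Lemma~\ref{lem:projpert}: the displacement interval $I_{(x,y)}$ has endpoints $x$ and $x/\alpha$ regardless of $y$, so the supremum over $y$ in the characteristic function $\chi$ is vacuous, i.e. $\sup_y \chi(x,y,t) = \chi(x,t)$, where $\chi(x,t)=1$ iff $t$ lies between $x/\alpha$ and $x$.

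First I would invoke the variational formula (Proposition~\ref{prop:variational}): for $G \in \AC_0$ with $\|G'\|_{L^q}\le 1$, I need to bound $\int_{\R} G(x)\,((\calP f)(x) - (\calP f_\alpha)(x))\,dx$. Running the same change of variables as in \eqref{eq:401021-0}--\eqref{eq:3121031} gives the bound $\|\calP(|f|)\|_{L^p} \cdot \big\| |G(x) - G(x/\alpha)| \big\|_{L^q(dx)}$, with no supremum over $y$ surviving. So the entire problem reduces to showing
\begin{align}
\label{eq:dilation_target}
\big\| G(x) - G(x/\alpha) \big\|_{L^q(dx)} \le \frac{\alpha-1}{\alpha^{(p-1)/p}}
\end{align}
whenever $\|G'\|_{L^q}\le 1$. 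Writing $G(x) - G(x/\alpha) = \int g(t)\chi(x,t)\,dt$ with $g = G'$, the two bounds I need are the analogues of \eqref{eq:56400} and \eqref{eq:56401}: for fixed $x$, $\int \chi(x,t)\,dt = |x - x/\alpha| = |x|(\alpha-1)/\alpha$, and for fixed $t$, $\int \chi(x,t)\,dx = |t|(\alpha - 1)$ (since $\chi(x,t)=1$ iff $t \le x \le \alpha t$ for $t\ge 0$, an interval of length $(\alpha-1)t$). These are sharper than the crude $\varepsilon(\Phi)=\alpha-1$ and $2\varepsilon(\Phi)$ because the displacement shrinks near the origin and the factor of $2$ disappears.

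Applying H\"older's inequality exactly as in Step~2 of Lemma~\ref{lem:projpert}, but feeding in these refined integral bounds, should yield
\begin{align}
\int \big| G(x) - G(x/\alpha) \big|^q\,dx
\le \int \Big(\int |g(t)|^q \chi(x,t)\,dt\Big)\Big(\int \chi(x,t)\,dt\Big)^{q/p}\,dx,
\end{align}
and after bounding $(\int\chi(x,t)\,dt)^{q/p}$ and exchanging order of integration the $x$-integral of $\chi$ produces the factor $(\alpha-1)|t|$; the $|t|$ factors and the $x$-dependence should combine to give \eqref{eq:dilation_target} once I track the exponents carefully. The main obstacle I anticipate is precisely this exponent bookkeeping: because both integral bounds now carry a factor of $|x|$ or $|t|$ rather than being uniform, the clean telescoping of Lemma~\ref{lem:projpert} needs a more careful estimate, and I must verify that the $|t|$-weight does not spoil the $L^q$ norm of $g$ (it should not, since $\chi(x,t)$ confines $t$ to within $\BB$, so $|t|\le 1$, and the weight only helps). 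I would also handle the endpoint cases $p=1,q=\infty$ and $p=\infty,q=1$ separately, as in Lemma~\ref{lem:projpert}, since H\"older degenerates there; in both, the bound \eqref{eq:dilation_target} reduces to a direct estimate using $\sup_x\int\chi(x,t)\,dt$ or $\sup_t\int\chi(x,t)\,dx$ over the unit ball.
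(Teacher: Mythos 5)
Your overall strategy is the paper's: reduce via Proposition~\ref{prop:variational} and the change of variables from Lemma~\ref{lem:projpert} to bounding $\|G(x)-G(x/\alpha)\|_{L^q(dx)}$, then refine the two $\chi$-integral bounds \eqref{eq:56400}--\eqref{eq:56401} using the special form of the dilation; your observation that $\psi_1(x,y)=x/\alpha$ is independent of $y$, so the $\sup_y$ is vacuous, is correct and is implicitly how the paper proceeds as well. The gap is in your per-$t$ bound. You take $\int\chi(x,t)\,dx=(\alpha-1)|t|$ and then use $|t|\le 1$ to get $\alpha-1$, but this ignores that $x$ is also confined to the support: the set $\{x:\chi(x,t)=1\}=[t,\alpha t]\cap(-1,1)$ has measure at most $\min\{(\alpha-1)t,\,1-t\}\le(\alpha-1)/\alpha$ (maximized at $t=1/\alpha$). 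With the ingredients as you state them, the Step-2 H\"older scheme (which weights the per-$x$ bound by exponent $1/p$ and the per-$t$ bound by $1/q$) yields $\left((\alpha-1)/\alpha\right)^{1/p}(\alpha-1)^{1/q}=(\alpha-1)/\alpha^{1/p}$. This proves the lemma only for $1\le p\le 2$; for $p>2$ it is strictly weaker than the claimed $(\alpha-1)/\alpha^{(p-1)/p}$, and at the endpoint $p=\infty$, $q=1$ your direct estimate via $\sup_t\int\chi\,dx$ gives $\alpha-1$ versus the claimed $(\alpha-1)/\alpha$. So the ``exponent bookkeeping'' you defer cannot close for $2<p\le\infty$ as written.

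The fix lives inside your own argument: replace the untruncated per-$t$ bound by $\int\chi(x,t)\,dx\le(\alpha-1)/\alpha$. Then both ingredient bounds equal $(\alpha-1)/\alpha$, and the combination gives $\cdist_p(\calP_u f,\calP_u f_\alpha)\le\|\calP_u(|f|)\|_{L^p}\cdot(\alpha-1)/\alpha$ for every $p$, which is in fact sharper than the lemma, since $(\alpha-1)/\alpha\le(\alpha-1)/\alpha^{(p-1)/p}$. For comparison, the paper makes the mirror allocation: it keeps the crude per-$x$ bound $\int\chi\,dt\le\alpha-1$ and refines only the per-$t$ bound to $(\alpha-1)/\alpha$ (that is where the support constraint enters there), giving exactly $(\alpha-1)^{1/p}\left((\alpha-1)/\alpha\right)^{1/q}=(\alpha-1)/\alpha^{(p-1)/p}$. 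Your refinement of the per-$x$ integral to $|x|(\alpha-1)/\alpha\le(\alpha-1)/\alpha$ is correct, and once paired with the truncated per-$t$ bound it actually improves on the paper's constant; but without that truncation your route fails to establish the stated inequality for $p>2$.
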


Theorem \ref{thm:sliced_rotations2D} follows immediately
by taking the $p$-th power and averaging over all $u$.

\begin{proof}[Proof of Lemma \ref{lem:dilations}]

Without loss of generality, suppose $u = (1,0,\dots,0)$.
It is enough to show that for all $|t| < 1$,
\begin{align}
\int \sup_{y \in \R^{d-1}} \chi(x,y,t) \, dx \le \frac{\alpha-1}{\alpha};
\end{align}
this estimate can then be used in place of \eqref{eq:56401} in the proof
of Lemma \ref{lem:projpert}.

Without loss of generality, suppose $0 \le t < 1$.
Let $S_t$ denote the set of all $x$, $|x| < 1$,
satisfying $\sup_{y \in \R^{d-1}} \chi(x,y,t) = 1$.
If $x \in S_t$, then
$x < t < \alpha x$;
since $t \ge 0$, this restricts $x \ge 0$ as well,
and
$S_t = (t/\alpha,t)$, so $|S_t| = t(1 - 1/\alpha)$,
which is maximized at $t=1$; thus
\begin{align}
\int \sup_{y \in \R^{d-1}} \chi(x,y,t) \, dx \le 1 - 1/\alpha = \frac{\alpha-1}{\alpha},
\end{align}
as claimed.
Using this estimate in place of the bound $\int \sup_{y \in \R^{d-1}} \chi(x,y,t) \, dx
\le 2 \varepsilon(\Phi)$ gives the final bound
\begin{align}
\cdist(\calP_u f, \calP_u f_\Phi)
\le \|\calP_u (|f|)\|_{L^p} (\alpha-1)^{1/p} \left(\frac{\alpha-1}{\alpha}\right)^{1/q}
= \|\calP_u (|f|)\|_{L^p} \cdot \frac{(\alpha-1)}{\alpha^{(p-1)/p}},
\end{align}
completing the proof.

\end{proof}

\subsection{Convolutions}
\label{section:convolutions}

In this section, we
remark on the behavior of the sliced Cram\'er distance
after convolution of its input functions.
This situation occurs commonly in signal and image processing,
where one typically observes signals that have been convolved with
a function induced from the 
measurement apparatus.
The bound we state here is analogous to
Theorem \ref{thm:wasserstein_convo} and Corollary \ref{cor:sw_convo}
for Wasserstein and sliced Wasserstein distances,
respectively.
Our result is a slight generalization of the first part of Theorem 5
in the prior work \cite{zhang2023cramer},
where it is stated in terms of random variables
(and hence only applies to non-negative functions)
and specialized to the case $p=2$;
we think it is valuable to explicitly state and prove
the result for general $p$
and reformulate it in
terms of convolution, where its applicability to signal and image processing
may be more evident. Furthermore, our bound  is sharper
when the convoluton kernel takes on negative values,
as can occur in scientific imaging applications.

\begin{thm}
\label{thm:convolutions}
Let $1 \le p \le \infty$.
Suppose $f$ and $g$ are in
$L^p(\R^d)$ and compactly supported,
$w$ is in $L^1(\R^d)$ and compactly supported,
and $\eta$ is a probability measure over $\S^{d-1}$.
Then
\begin{align}
\label{eq:convolutions}
\SC_{\eta,p}(f \ast w, g \ast w) \le
\left( \int_{\S^{d-1}} \|\calP_u w\|_{L^1} d\eta(u)   \right) \cdot \SC_{\eta,p}(f, g).
\end{align}
\end{thm}

\begin{rmk}
When $w \ge 0$,
the conclusion of Theorem \ref{thm:convolutions}
becomes
\begin{align}
\label{eq:convolutions2}
\SC_{\eta,p}(f \ast w, g \ast w) \le
\|w\|_{L^1}  \cdot \SC_{\eta,p}(f, g),
\end{align}
which is essentially what appears (for $p=2$) in
\cite{zhang2023cramer}.
However, in certain scientific imaging applications,
such as cryo-electron microscopy,
where $w$ is the point-spread function
of the imaging apparatus,
$w$ may take negative values,
in which case the bound \eqref{eq:convolutions}
is sharper than \eqref{eq:convolutions2}.

\end{rmk}

\begin{proof}

We first show the result for $d=1$, that is,
for when
$f$, $g$  and $w$ are functions on $\R$.
That is, we will show that
\begin{align}
\label{eq:convo_1d}
\cdist_p(f \ast w, g \ast w) \le \|w\|_{L^1} \cdot \cdist_p(f, g).
\end{align}
Let $H$ be in $\calA_0$,
with $\|H'\|_{L^q} = 1$.
For any $s$, $\|H'(u+s)\|_{L^q(du)} = 1$ too.
Then using Proposition \ref{prop:variational},
\begin{align}
\langle f \ast w - g \ast w, H \rangle
&= \int ((f - g)\ast w)(t) H(t) \, dt
\nonumber \\
&= \int \int (f-g)(t-s) w(s) \,ds \, H(t) \, dt
\nonumber \\
&= \int w(s) \int (f-g)(t-s) H(t) \, dt \, ds
\nonumber \\
&= \int w(s) \int (f-g)(u) H(u+s) \,du \, ds
\nonumber \\
&\le \|w\|_{L^1} \sup_{s} \left| \int (f-g)(u) H(u+s) \, du \right|
\nonumber \\
&\le \|w\|_{L^1} \cdist_p(f,g),
\end{align}
and \eqref{eq:convo_1d} now follows by taking the supremum over all such $H$
and invoking Proposition \ref{prop:variational}.

We now turn to general $d \ge 1$.
First, by
Lemma \ref{lem:projection_convolution},
$\calP_u (w \ast h) = (\calP_u w) \ast (\calP_u h)$.
From the 1D bound,  we then have
\begin{align}
\cdist_p(\calP_u (w \ast f), \calP_u (w \ast g))
&= \cdist_p((\calP_u w) \ast (\calP_u f), (\calP_u w) \ast (\calP_u g))
\nonumber \\
&\le \|\calP_u w\|_{L^1} \cdist_p(\calP_u f, \calP_u g)
\end{align}

Taking $p$-th powers and integrating over $u$ gives the result.

\end{proof}

\section{Discretizations and robustness to noise}
\label{section:discrete}

In this section, we will describe Fourier-based discretizations
of the Cram\'er distance and the 2D sliced Cram\'er distance
with respect to the uniform measure over $\S^1$,
between functions with equal integrals,
and analyze their robustness to additive, heteroscedastic sub-Gaussian noise.
More precisely, we will show that, given vectors of noisy samples from
two smooth functions, the discrete distance
approximates the distances between the smooth functions only, removing
the effect of the noise as the number of samples
grows.
These results are to be expected;
indeed, the Cram\'er distance itself involves applying a smoothing
filter to each input, which, by averaging the samples, naturally has a denoising effect.
The denoising property is also of interest
in contrasting with
Wasserstein and sliced Wasserstein distances, which,
because they are defined between probability measures, do not naturally induce
distances between vectors sampled from a signal-plus-noise model
without modification of the definition.

\subsection{Robustness to noise in 1D}
We define a discrete approximation to the 1D Volterra norm,
which then yields an approximation to the Cram\'er distance.
Let $a < b$ and let $L = b-a$ be the interval length.
Let $n$ a positive integer; we will assume
for simplicity that $n$ is even.
Let $x \in \R^n$;
the reader should think of the entries $x[j]$ of $x$
as (possibly noisy) samples from a function $f$ on $[a,b]$,
that is, $x[j] \approx f(t_j)$, $j=0,\dots,n-1$,
where $t_j = a + jL / n$.

Define the values $\alpha[k]$ (the normalized discrete Fourier coefficients
of $x$) by
\begin{align}
\label{eq:alpha_1d}
\alpha[k] = \frac{L}{n} \sum_{j=0}^{n-1} x[j] e^{-2 \pi \mi k t_j / L},
\quad 0 \le k < n/2.
\end{align}
Then
\begin{math}
\what f(k/L) \approx \alpha[k].
\end{math}
Define
\begin{align}
\beta[k] =
\frac{\alpha[k]}{2\pi \mi k/L},
\quad  0 < |k| < n/2,
\end{align}
and
\begin{align}
\beta[0] =
- \sum_{0 < |k| < n/2} \beta[k]  e^{2 \pi \mi k a /L}.
\end{align}
Then the $\beta[k]$ approximate the Fourier coefficients of $\V f$:
\begin{math}
\what{(\V f)}(k/L) \approx \beta[k].
\end{math}

Define the function $\nu_x(t)$ by
\begin{align}
\label{eq:nu_x_1d}
\nu_x(t) = \frac{1}{L} \sum_{k=-n/2+1}^{n/2-1} \beta[k] e^{2 \pi \mi t k /L}.
\end{align}
(Note that we do not define $\beta[\pm n/2]$,
because the terms they would contribute to $\nu_x(t)$
would either be purely imaginary or $0$, depending
on the convention.)
Then for all $t$, $\nu_x(t) \approx (\V f)(t)$.

We then define the discrete Volterra $p$-norm of the vector $x$ as follows:
\begin{align}
\label{eq:discrete_volterra}
\mathrm{V}_p(x) = \left(\frac{L}{n}\sum_{j=0}^{n-1} |\nu_x(t_j)|^p \right)^{1/p}
\end{align}
when $1 \le p < \infty$, and
\begin{align}
\mathrm{V}_\infty(x) = \max_{0 \le j \le n-1} |\nu_x(t_j)|
\end{align}
when $p=\infty$.
Given two vectors $x$ and $y$ in $\R^n$, we then define
their discrete Cram\'er distance as
\begin{align}
\what\cdist_p(x,y) = \mathrm{V}_p(x-y).
\end{align}

\begin{rmk}
Using the Fast Fourier Transform (FFT)
and the inverse FFT (IFFT)
to evaluate the $\alpha[k]$ and $\nu_x(t_j)$,
respectively, the entire computation described here can be performed
at cost $O(n \log n)$.
\end{rmk}

We can now state the main result from this section,
which says that the discrete Cram\'er distance
is robust to additive heteroscedastic sub-Gaussian noise:

\begin{thm}
\label{thm:discrete_main1d}
Suppose $f$ and $g$ are $C^r$ functions on $\R$, $r > 1$, that are supported on $[a,b]$,
and satisfy
$\int_{a}^{b} f = \int_{a}^{b} g$.
Let $Z[0],Z[1],\dots,Z[n-1]$, $\wtilde Z[0],\dots,\wtilde Z[n-1]$
be independent, mean-zero sub-Gaussian
random variables with sub-Gaussian norms $\sigma_j = \|Z[j]\|_{\psi_2}$
and $\wtilde \sigma_j = \|\wtilde Z[j]\|_{\psi_2}$;
and suppose $\sigma > 0$ satisfies
\begin{align}
\frac{1}{n}\sum_{j=0}^{n-1} \sigma_j^2
    + \frac{1}{n}\sum_{j=0}^{n-1} \wtilde \sigma_j^2 \le \sigma^2,
\end{align}
for all $n$.
Let $X_n$ and $Y_n$ be vectors in $\R^n$
with entries $X_n[j] = f(t_j) + Z[j]$ and $Y_n[j] = g(t_j) + \wtilde Z[j]$.
Then:

\begin{enumerate}

\item
Expected error.
For $1 \le p < \infty$,
there are $A > 0$ and $B = B(p,f) > 0$
such that for
all $\sigma \ge 0$
and $n \ge 2$,
\begin{align}
\label{eq:expectation_1d}
\EE\big|\what{\cdist}_p(X_n, Y_n)   - \cdist_p(f,g) \big|
    \le A L^{1+1/p} \sqrt{p} \frac{\sigma }{\sqrt{n}}
        + B \max\left\{\frac{\log(n)}{n^r}, \frac{1}{n^2} \right\}.
\end{align}
Furthermore, there are $A > 0$ and $B = B(f) > 0$
such that for
all $\sigma \ge 0$
and $n \ge 2$,
\begin{align}
\label{eq:expectation_1d_infty}
\EE\big|\what{\cdist}_\infty(X_n, Y_n)   - \cdist_\infty(f,g) \big|
    \le A L \sigma \sqrt{\frac{\log(n)}{n}}
        + B \max\left\{\frac{\log(n)}{n^r}, \frac{1}{n^2} \right\}
\end{align}

\item
Concentration bound.
For $1 \le p < \infty$,
there is a value $C = C(p,f) > 0$
such that for all $\sigma > 0$,
$t \ge 0$,
and $n \ge 2$,
\begin{align}
\Prob\left\{ \big|\what{\cdist}_p(X_n, Y_n)   - \cdist_p(f,g) \big| \ge t   \right\}
\le 2  \exp\left(- C \frac{n t^2}{\sigma^2} \right).
\end{align}
Furthermore, there is a value $C = C(f) > 0$
such that for all $\sigma > 0$,
$t \ge 0$,
and $n \ge 2$,
\begin{align}
\Prob\left\{ \big|\what{\cdist}_\infty(X_n, Y_n)   - \cdist_\infty(f,g) \big| \ge t   \right\}
\le 2 \exp\left(- C \frac{n t^2}{\sigma^2 \log(n)} \right).
\end{align}

\item 
Almost sure limit.
For all $1 \le p \le \infty$,
\begin{math}
\what{\cdist}_p(X_n, Y_n) \to \cdist_p(f,g)
\end{math}
almost surely as $n \to \infty$.

\end{enumerate}

\end{thm}

\begin{rmk}
It is straightforward to extend the definition of $\what{\cdist}_p(x,y)$,
and the results from Theorem \ref{thm:discrete_main1d},
to the setting where $x$ and $y$ contain samples of $f$ and $g$
taken on different grids, by interpolating the estimated Volterra transforms
onto a common grid.
\end{rmk}

Theorem \ref{thm:discrete_main1d} is an easy corollary
of the following two results:

\begin{prop}
\label{prop:discrete_volterra_1d}
Suppose $f$ is a $C^r$ function on $\R$, $r > 1$, that is supported on $[a,b]$
and satisfies
$\int_{a}^{b} f = 0$.
Let $x \in \R^{n}$ have entries $x[j] = f(t_j)$, $0 \le j \le n-1$.
Then for all $1 \le p \le \infty$,
there is a value $C = C(p,f) > 0$ such that
\begin{align}
\left| \mathrm{V}_p(x) - \|f\|_{V^p}\right|
    \le C\max\left\{\frac{\log(n)}{n^r}, \frac{1}{n^2} \right\}
\end{align}
for $n \ge 2$.
\end{prop}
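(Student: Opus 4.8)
The plan is to split the error into a \emph{spectral approximation} part, controlling how well the trigonometric polynomial $\nu_x$ reproduces $\V f$, and a \emph{quadrature} part, controlling how well the grid sum reproduces the integral $\|\V f\|_{L^p}$. Since $\int_a^b f = 0$, the function $\V f$ satisfies $(\V f)(a)=(\V f)(b)=0$, so it extends to a $C^\infty$ periodic function of period $L$, with Fourier coefficients $c_k = \what f(k/L)/(2\pi\mi k)$ for $k\neq 0$. The construction makes $\nu_x$ the trigonometric polynomial with coefficients $\beta[k]/L$, and the definition of $\beta[0]$ forces $\nu_x(a)=0=(\V f)(a)$. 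Writing $\|h\|_{\ell^p_n} = \big(\tfrac{L}{n}\sum_{j=0}^{n-1}|h(t_j)|^p\big)^{1/p}$ for the weighted discrete norm, so that $\mathrm{V}_p(x)=\|\nu_x\|_{\ell^p_n}$, I would bound $\big|\mathrm{V}_p(x)-\|f\|_{V^p}\big|$ by the two terms $\big|\,\|\nu_x\|_{\ell^p_n}-\|\V f\|_{\ell^p_n}\,\big|$ (term I) and $\big|\,\|\V f\|_{\ell^p_n}-\|\V f\|_{L^p}\,\big|$ (term II) via the triangle inequality.

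For term I, the coefficients $\alpha[k]$ are exactly the $n$-point Riemann sums of the integral defining $\what f(k/L)$, so the aliasing identity $\alpha[k]=\sum_{m}\what f\big((k+mn)/L\big)$ gives $|\alpha[k]-\what f(k/L)|\le\sum_{m\neq 0}|\what f((k+mn)/L)|$. Because $f$ is $C^\infty$ and compactly supported, $\what f$ decays faster than any polynomial, so both $\sum_{|k|<n/2}|\alpha[k]-\what f(k/L)|$ and the truncation tail $\sum_{|k|\ge n/2}|c_k|$ are $O(n^{-N})$ for every $N$; the anchoring of $\beta[0]$ then controls the constant mode since $\nu_x$ and $\V f$ agree at $a$. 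This yields $\|\nu_x-\V f\|_{L^\infty}=O(n^{-N})$, and hence term I is at most $L^{1/p}\|\nu_x-\V f\|_{L^\infty}=O(n^{-N})$, uniformly in $p$ (note $L^{1/p}\le\max(1,L)$).

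Term II is the genuine difficulty, and I expect it to be the main obstacle: it is a quadrature error that must be made uniform over all $1\le p\le\infty$. Set $\phi=|\V f|$, $M=\|\V f\|_{L^\infty}$, $\Lambda=\|f\|_{L^\infty}$ (a Lipschitz constant for $\phi$), $A=\|\V f\|_{\ell^p_n}^p$ and $B=\|\V f\|_{L^p}^p$. The cleanest route is the layer-cake representation $A-B=\int_0^M p\lambda^{p-1}\big(F_n(\lambda)-F(\lambda)\big)\,d\lambda$, where $F$ and $F_n$ are the Lebesgue and grid measures of the super-level set $\{\phi>\lambda\}$. Using that the super-level sets of the fixed smooth $\phi$ are unions of boundedly many intervals, one gets the uniform estimate $\sup_\lambda|F_n(\lambda)-F(\lambda)|\le K/n$, hence $|A-B|\le KM^p/n$. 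Combining this with the concentration lower bound $B\ge c_0 M^{p+1}/(\Lambda(p+1))$ (from $\phi(t)\ge M-\Lambda|t-t^\ast|$ near its maximizer) and the elementary inequality $|A^{1/p}-B^{1/p}|\le \tfrac1p\min(A,B)^{1/p-1}|A-B|$, I find that for $p\le c_1 n$ one has $A\ge B/2$, the factors of $M^p$ cancel, and term II is $O\big((p+1)/(pn)\big)=O(1/n)$ uniformly.

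The remaining regime $p>c_1 n$, including $p=\infty$, I would treat by reducing to a comparison of extrema: since $\phi$ is $\Lambda$-Lipschitz and the grid spacing is $L/n$, the sampled maximum satisfies $0\le M-\max_j\phi(t_j)\le \Lambda L/n$, and for such large $p$ both $\|\V f\|_{L^p}$ and $\|\V f\|_{\ell^p_n}$ lie within $O(1/n)$ of this common near-maximal value. The delicate point here — and the part I expect to require the most care — is to control the $L^{1/p}$ normalization factors sharply enough to obtain exactly $O(1/n)$ rather than a spurious $O(\log n /n)$, and to establish the uniform level-set bound $\sup_\lambda|F_n-F|\le K/n$ rigorously. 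This is precisely where the $C^\infty$ hypothesis enters, through the finiteness of the number of critical points (equivalently, sign changes) of $\V f$, in line with the paper's remark that substantially weaker regularity would suffice. Once term II is established uniformly, combining it with term I and absorbing the trivial small-$n$ cases into the constant yields the stated bound $\big|\mathrm{V}_p(x)-\|f\|_{V^p}\big|\le C/n$ for all $1\le p\le\infty$ and $n\ge 2$.
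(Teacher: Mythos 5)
Your Term I (the spectral part) is correct and essentially identical to the paper's argument: aliasing plus superalgebraic decay of $\what f$ gives $\sup_t|\nu_x(t)-(\V f)(t)| = O(n^{-N})$, and your observation that the definition of $\beta[0]$ forces $\nu_x(a)=0=(\V f)(a)$ is a clean variant of the paper's handling of the constant mode. The gap is in Term II, and it sits exactly where you flagged it. Your layer-cake route rests on the claim that the super-level sets of $|\V f|$ are unions of \emph{boundedly many} intervals, which you attribute to ``the $C^\infty$ hypothesis \dots through the finiteness of the number of critical points.'' That implication is false: $C^\infty$ does not bound the number of critical points or sign changes. A compactly supported smooth $f$ can make $\V f$ oscillate infinitely often (e.g.\ an $e^{-1/x}\sin(1/x)$-type profile near an endpoint, smoothly damped), in which case the number of components of $\{|\V f|>\lambda\}$ blows up as $\lambda\downarrow 0$, the uniform bound $\sup_\lambda|F_n(\lambda)-F(\lambda)|\le K/n$ is not justified, and a direct accounting of the per-component grid errors in such an example yields only $O(\log n/n)$ --- precisely the spurious loss you were worried about. (Real-analyticity would rescue the finiteness claim, but it is not among the hypotheses.) Your large-$p$ regime, the lower bound $B\ge c_0 M^{p+1}/(\Lambda(p+1))$, and the interpolation inequality $|A^{1/p}-B^{1/p}|\le \frac1p\min(A,B)^{1/p-1}|A-B|$ are all fine, but they are downstream of the unproven level-set estimate.

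The paper closes this step with an argument so simple that your entire case analysis in $p$ becomes unnecessary, and it needs only that $G=\V f$ is Lipschitz (with constant $\|f\|_\infty$), not any level-set structure. On each cell $[t_m,t_{m+1}]$ the mean value theorem for integrals gives a point $t_m^*$ with
\begin{align}
\int_{t_m}^{t_{m+1}} |G(x)|^p\,dx = \frac{L}{n}\,|G(t_m^*)|^p,
\end{align}
so with $S_m=(L/n)^{1/p}|G(t_m)|$ and $R_m=(L/n)^{1/p}|G(t_m^*)|$ one has $|S_m-R_m|\le A(L/n)^{1+1/p}$ by Lipschitz continuity, and the reverse triangle inequality in $\ell^p$ gives
\begin{align}
\Bigl|\,\bigl(\tfrac{L}{n}\textstyle\sum_m |G(t_m)|^p\bigr)^{1/p} - \|G\|_{L^p}\,\Bigr|
\le \Bigl(\textstyle\sum_m |S_m-R_m|^p\Bigr)^{1/p}
\le \frac{L^{1+1/p}}{n}\,A,
\end{align}
uniformly in $1\le p<\infty$, with $p=\infty$ by passing to the limit. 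Replacing your layer-cake analysis of Term II with this estimate (or proving it yourself) repairs the proof; as written, the proposal does not establish the claimed uniform $O(1/n)$ bound.
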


\begin{prop}
\label{prop:noise_1d}
Let $Z[0],Z[1],\dots,Z[n],\dots,$ be independent, mean-zero sub-Gaussian
random variables with sub-Gaussian norms $\sigma_j = \|Z[j]\|_{\psi_2}$;
and suppose $\sigma > 0$ satisfies
\begin{align}
\frac{1}{n}\sum_{j=0}^{n-1} \sigma_j^2 \le \sigma^2,
\end{align}
for all $n$.
For each $n \ge 2$, let $Z_n = (Z[0],\dots,Z[n-1])$.
Then:

\begin{enumerate}

\item
Expectation bound.
There is a universal constant $C>0$ such that for
all $1 \le p < \infty$,
$\sigma \ge 0$,
and
$n \ge 2$,
\begin{align}
\EE\left[ \mathrm{V}_p(Z_n) \right] \le C L^{1 + 1/p} \sqrt{p} \, \frac{\sigma}{\sqrt{n}},
\end{align}
and
\begin{align}
\EE\left[ \mathrm{V}_\infty(Z_n) \right] \le C L \sigma \sqrt{\frac{\log(n)}{n}}.
\end{align}

\item
Concentration bound.
For $1 \le p < \infty$,
there is a value $C = C(p) > 0$
such that for all $\sigma > 0$,
$t \ge 0$,
and $n \ge 2$,
\begin{align}
\Prob\left\{ \mathrm{V}_p(Z_n) \ge t   \right\}
\le 2  \exp\left(- C \frac{n t^2}{L^{2+2/p}\sigma^2} \right).
\end{align}
Furthermore, there is a universal constant $C>0$
such that for all $\sigma > 0$,
$t \ge 0$,
and $n \ge 2$,
\begin{align}
\Prob\left\{ \mathrm{V}_\infty(Z_n) \ge t \right\}
\le 2 \exp\left(- C \frac{n t^2}{L^2 \sigma^2 \log(n)} \right)
\end{align}
for $2 < p \le \infty$.

\item 
Almost sure limit.
For all $1 \le p \le \infty$,
\begin{math}
\mathrm{V}_p(Z_n) \to 0
\end{math}
almost surely as $n \to \infty$.

\end{enumerate}
\end{prop}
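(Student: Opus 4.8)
The entire proposition reduces to understanding a single linear map. Writing $w=(w_0,\dots,w_{n-1})$ with $w_j=\nu_{Z_n}(t_j)$, the definition \eqref{eq:discrete_volterra} reads $\mathrm{V}_p(Z_n)=\big(\tfrac{L}{n}\sum_j|w_j|^p\big)^{1/p}$ (and the maximum for $p=\infty$). Since every step producing $\nu$ --- the normalized DFT $\alpha[k]$, the division $\beta[k]=\alpha[k]/(2\pi\mi k/L)$, the mean-zero correction defining $\beta[0]$, and the trigonometric-polynomial evaluation at $t_j$ --- is linear in the samples, we may write $w_j=\sum_l M_{jl}\,Z[l]$ for a fixed real matrix $M$, so each $w_j$ is a centered Gaussian. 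The plan is to reduce everything to the estimate $\max_{j,l}|M_{jl}|\le C/n$, where $C$ depends only on $L$. I would obtain it by differentiating the definitions, which yields
\[
M_{jl}=\frac{L}{2\pi\mi\,n}\sum_{0<|k|<n/2}\frac{e^{2\pi\mi k(j-l)/n}-e^{-2\pi\mi k l/n}}{k},
\]
and then observing that each exponential sum equals $2\mi\sum_{k=1}^{n/2-1}\sin(2\pi k m/n)/k$ for an integer $m$; these partial sums of $\sum\sin(k\theta)/k$ are bounded by an absolute constant uniformly in $N$ and $\theta$, giving $|M_{jl}|\le C/n$. This \emph{is} the technical heart of the argument: it says the discrete integration kernel (including the correction carried by $\beta[0]$) is uniformly $O(1/n)$. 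Two consequences follow at once, using $\tfrac{1}{n}\sum_l\sigma_l^2\le\sigma^2$: a uniform per-coordinate variance bound $\Var(w_j)=\sum_l\sigma_l^2 M_{jl}^2\le (C/n)^2\sum_l\sigma_l^2\le C^2\sigma^2/n$, and, with $D=\diag(\sigma_l^2)$, a Frobenius bound $\|MD^{1/2}\|_{\mathrm{F}}^2=\sum_j\Var(w_j)\le C^2\sigma^2$, hence $\|MD^{1/2}\|_{\op}\le C\sigma$.

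For the expectation bound, the case $p=2$ is immediate: $\EE[\mathrm{V}_2(Z_n)^2]=\tfrac{L}{n}\sum_j\Var(w_j)\le LC^2\sigma^2/n$, so by Jensen $\EE[\mathrm{V}_2(Z_n)]\le C'\sigma/\sqrt{n}$, matching $m(2)=0$. For $1\le p\le 2$, power-mean monotonicity on the uniform probability measure gives $\mathrm{V}_p\le L^{1/p-1/2}\mathrm{V}_2$, so the same $\sigma/\sqrt{n}$ rate holds. For $p=\infty$, the uniform variance bound and the standard Gaussian maximal inequality give $\EE[\mathrm{V}_\infty(Z_n)]=\EE[\max_j|w_j|]\le C\sigma\sqrt{2\log(2n)}/\sqrt{n}$, matching $m(\infty)=1/2$. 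For $2<p<\infty$ I would interpolate: from $|w_j|^p\le |w_j|^2(\max_l|w_l|)^{p-2}$ one gets $\mathrm{V}_p\le \mathrm{V}_\infty^{(p-2)/p}\mathrm{V}_2^{2/p}$, and H\"older in expectation yields $\EE[\mathrm{V}_p]\le(\EE\mathrm{V}_\infty)^{(p-2)/p}(\EE\mathrm{V}_2)^{2/p}\le C''\sigma(\log n)^{1/2-1/p}/\sqrt{n}$, exactly the claimed exponent $m(p)$.

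For the concentration bound I would use that $\mathrm{V}_p$ is a seminorm (a fixed norm composed with the linear map $Z\mapsto w$), so writing $Z_n=D^{1/2}g$ with $g\sim N(0,I_n)$, the function $g\mapsto\mathrm{V}_p(D^{1/2}g)$ is Lipschitz and the Gaussian concentration inequality (Borell--TIS) applies. For $1\le p\le 2$, reducing to $\mathrm{V}_2$, the Lipschitz constant is $(L/n)^{1/2}\|MD^{1/2}\|_{\op}\le C\sigma/\sqrt{n}$ by the Frobenius bound; concentration about the mean gives $\Prob\{\mathrm{V}_2\ge\EE\mathrm{V}_2+s\}\le\exp(-Bns^2/\sigma^2)$, and since $\EE\mathrm{V}_2=O(\sigma/\sqrt{n})$, for $t$ exceeding twice the mean the tail $\exp(-Bnt^2/\sigma^2)$ follows, while for smaller $t$ the asserted bound holds trivially once $A$ is taken $\ge 1$; this proves $\Prob\{\mathrm{V}_p\ge t\}\le A\exp(-Bnt^2/\sigma^2)$. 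For $2<p\le\infty$ I would instead use the cruder comparison $\mathrm{V}_p\le L^{1/p}\max_j|w_j|$ together with a union bound over the $n$ coordinates, each Gaussian of variance at most $C^2\sigma^2/n$: this produces $\Prob\{\mathrm{V}_p\ge t\}\le An\exp(-Bnt^2/\sigma^2)$, where the extra factor $n$ is the union-bound artifact (unavoidable here because $\EE\mathrm{V}_\infty$ itself carries a $\sqrt{\log n}$).

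Finally, the almost sure limit is Borel--Cantelli: for each fixed $\ep>0$ the concentration bounds give $\sum_n\Prob\{\mathrm{V}_p(Z_n)\ge\ep\}<\infty$, since the exponential in $n$ dominates the polynomial prefactor, so $\mathrm{V}_p(Z_n)\to 0$ almost surely (intersecting over $\ep=1/m$). The main obstacle throughout is the entrywise kernel estimate $|M_{jl}|\le C/n$, i.e.\ the uniform boundedness of the sampled discrete integration kernel; once that is established, the uniform variance and Frobenius bounds make all three parts routine applications of Jensen, interpolation, Gaussian concentration, and Borel--Cantelli.
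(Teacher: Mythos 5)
Your proposal is correct, and in most respects it tracks the paper's proof: your entrywise kernel bound $|M_{jl}| \le C/n$ is exactly the paper's use of the uniform boundedness of the partial sums $\sum_{k \le N} \sin(k\theta)/k$ (Lemma \ref{lem:trig_sine}) applied to $X[\ell]$ and $\beta[0]$, merely reorganized into a single matrix $M$; your expectation bounds (Jensen at $p=2$, the reduction $\mathrm{V}_p \le L^{1/p-1/2}\mathrm{V}_2$ for $p<2$, the Gaussian maximum for $p=\infty$, and the interpolation $\mathrm{V}_p \le \mathrm{V}_2^{2/p}\mathrm{V}_\infty^{1-2/p}$ plus H\"older for $2<p<\infty$), your $p>2$ concentration (per-coordinate Gaussian tail plus union bound over $n$ coordinates), and your Borel--Cantelli conclusion all coincide with the paper's. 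The one genuinely different step is the concentration bound for $1 \le p \le 2$: the paper expands $\mathrm{V}_2(Z_n)^2$ in the orthogonal trigonometric basis, treats $\varphi[k]=\Re(\beta[k])$ and $\psi[k]=\Im(\beta[k])$ as independent mean-zero Gaussians with variances $O(L^4\sigma^2/(nk^2))$, and invokes the Laurent--Massart chi-square inequality, whereas you write $Z_n = D^{1/2}g$ with $g \sim N(0,\I_n)$ and apply Borell--TIS to the Lipschitz map $g \mapsto \mathrm{V}_2(D^{1/2}g)$, with Lipschitz constant $(L/n)^{1/2}\|M D^{1/2}\|_{\op} \le (L/n)^{1/2}\|M D^{1/2}\|_{\mathrm{F}} \le C\sigma(L/n)^{1/2}$ coming from your identity $\|M D^{1/2}\|_{\mathrm{F}}^2 = \sum_j \Var(w_j)$. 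Your route is arguably more robust: in the heteroscedastic setting the coefficients $\alpha[k]$ (hence the $\varphi[k],\psi[k]$) at distinct frequencies are in general correlated, so the independence on which the paper's chi-square argument leans requires care, while Gaussian Lipschitz concentration needs only that $Z_n$ itself is a vector of independent Gaussians; what the paper's approach buys in exchange is a tail bound for $\mathrm{V}_2$ that does not pass through a separate bound on the mean. One small imprecision: in the small-$t$ regime your phrase ``once $A$ is taken $\ge 1$'' should be ``once $A$ is taken sufficiently large,'' since for $t$ below twice the mean one has $nt^2/\sigma^2 = O(1)$ rather than $0$, so one needs $A \ge e^{B \cdot O(1)}$ to make the stated bound exceed $1$ --- this is the same constant adjustment the paper performs.
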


\begin{rmk}
When $p$ is an even integer,
the bound in Proposition \ref{prop:discrete_volterra_1d}
can be sharpened to $O(\log(n) / n^r)$.
\end{rmk}

We now show that Theorem \ref{thm:discrete_main1d}
follows from Propositions \ref{prop:discrete_volterra_1d}
and \ref{prop:noise_1d}:

\begin{proof}[Proof of Theorem \ref{thm:discrete_main1d}]

It is enough to show the analogous results for the Volterra
norm of $f$ alone; we can then replace $f$ by $f-g$.
Let $x[j] = f(t_j)$, $0 \le j \le n-1$, so that $X_n = x + Z_n$.
We have
\begin{align}
\mathrm{V}_p(X_n) - \|f\|_{V^p}
= \mathrm{V}_p(x + Z_n) - \|f\|_{V^p}
\le \mathrm{V}_p(x) - \|f\|_{V^p} + \mathrm{V}_p(Z_n),
\end{align}
and
\begin{align}
\mathrm{V}_p(X_n) - \|f\|_{V^p}
= \mathrm{V}_p(x + Z_n) - \|f\|_{V^p}
\ge \mathrm{V}_p(x) - \|f\|_{V^p} - \mathrm{V}_p(Z_n),
\end{align}
and therefore
\begin{align}
\left| \mathrm{V}_p(X_n) - \|f\|_{V^p} \right|
\le \left| \mathrm{V}_p(x) - \|f\|_{V^p} \right| + \mathrm{V}_p(Z_n).
\end{align}
The expected error bounds \eqref{eq:expectation_1d}
and \eqref{eq:expectation_1d_infty}
are then immediate,
as is almost sure convergence. To show concentration:
suppose first that $1 \le p < \infty$.
Then for all $t \ge 0$ and $n \ge 2$,
\begin{align}
\Prob\left\{ \mathrm{V}_p(Z_n) \ge t   \right\}
\le 2  \exp\left(- C \frac{n t^2}{\sigma^2} \right),
\end{align}
where $C$ may depend on $p$ and $L$.
By Proposition \ref{prop:discrete_volterra_1d},
by making $C$ smaller, we can ensure that for all $n \ge 2$,
\begin{align}
|\mathrm{V}(x) - \|f\|_{V^p}| \le \sqrt{\frac{\log(2) \sigma^2}{C n}},
\end{align}
and therefore, setting $\eta = \log(2)/C$,
\begin{align}
\left| \mathrm{V}_p(X_n) - \|f\|_{V^p} \right|
\le \sqrt{\eta  \frac{\sigma^2 }{ n}} + \mathrm{V}_p(Z_n).
\end{align}
Now, given $t \ge 0$, we consider two cases.
First, if $n$ is large enough so that
\begin{math}
\sqrt{\eta \, \sigma^2  /  n} \le t/2,
\end{math}
then
\begin{align}
\Prob \left\{\left| \mathrm{V}_p(X_n) - \|f\|_{V^p} \right| \ge t \right\}
& \le \Prob\left\{\mathrm{V}_p(Z_n) \ge t
        - \sqrt{\eta \, \sigma^2  /  n}  \right\}
\nonumber \\
&\le \Prob\left\{\mathrm{V}_p(Z_n) \ge t/2 \right\}
\nonumber \\
&\le 2 \exp\left\{-C n(t/2)^2 /  \sigma^2 \right\}
\nonumber \\
&= 2 \exp\left\{-(C/4) n t^2 /  \sigma^2 \right\}.
\end{align}
On the other hand, if $\sqrt{\eta \, \sigma^2  / n} \ge t/2$, then
this same bound holds trivially, since
\begin{align}
2 \exp\left(- (C/4) \frac{n t^2} {\sigma^2} \right)
=  2 \exp\left(- C \frac{n (t/2)^2} {\sigma^2} \right)
\ge  2 \exp\left(- C \frac{n (\eta \, \sigma^2  / n)} {\sigma^2} \right)
= 2 \exp\left(- C \eta \right)
= 1,
\end{align}
which exceeds $\Prob \left\{\left| \mathrm{V}_p(X_n) - \|f\|_{V^p} \right| \ge t \right\}$.
So, for all $n \ge 2$ and all $t \ge 0$, the bound
\begin{align}
\Prob \left\{\left| \mathrm{V}_p(X_n) - \|f\|_{V^p} \right| \ge t \right\}
\le 2 \exp\left(- (C/4) \frac{n t^2} {\sigma^2} \right)
\end{align}
holds, as desired. The proof for $p = \infty$ is similar.

\end{proof}

We now turn to the proofs of Propositions \ref{prop:discrete_volterra_1d}
and \ref{prop:noise_1d}.

\subsubsection{Proof of Proposition \ref{prop:discrete_volterra_1d}}

\begin{lem}
\label{lem:approx_alpha_1d}

Let $f$ satisfy the conditions of Proposition \ref{prop:discrete_volterra_1d},
and let $\alpha$ be defined as in \eqref{eq:alpha_1d}.
Then there is a value $C>0$ such that
for all even $n \ge 2$ and $|k| < n/2$,
\begin{align}
\left| \alpha[k] - \what{f}(k/L) \right| \le \frac{C}{n^r}.
\end{align}

\end{lem}

\begin{proof}
This is an immediate consequence of Lemma \ref{lem:fourier_decay_approx}.

\end{proof}

\begin{cor}
\label{cor:voltapprox}
Let $f$ satisfy the conditions of Proposition \ref{prop:discrete_volterra_1d},
and let $\nu_x$ be defined as in \eqref{eq:nu_x_1d}.
Then there is a value $C>0$
such that for $|t| < L$ and $n \ge 2$,
\begin{align}
|\nu_x(t) - (\V f)(t)| \le C\frac{\log(n)}{n^{r}}.
\end{align}

\end{cor}

\begin{proof}
Applying Lemma \ref{lem:approx_alpha_1d}
we have:
\begin{align}
\left| \sum_{0 < |k| < n/2} \beta[k] e^{2 \pi \mi k t / L}
    - \sum_{0 < |k| < n/2} \what{(\V f)}(k/L) e^{2 \pi \mi k t / L} \right|
\le C \sum_{0 < |k| < n/2} \frac{\left| \alpha[k] - \what f(k/L) \right|}{|k|}
\le C \frac{\log(n)}{n^r}.
\end{align}
Since $f$ is $C^r$, $|\what{f}(k/L)| = O(|k|^{-r})$, and therefore
$|\what{(\V f)}(k/L)| = O(|k|^{-(r+1)})$, and so
the tail may be bounded
\begin{align}
\left| \sum_{ |k| \ge n/2} \what{(\V f)}(k/L) e^{2 \pi \mi k t / L} \right|
\le C \sum_{ |k| \ge n/2} \frac{1}{|k|^{r+1}}
\le \frac{C}{n^{r}}.
\end{align}

Therefore,
\begin{align}
\left|\sum_{0 < |k| < n/2} \beta[k]  e^{2 \pi \mi k t /L}
     - \sum_{k \ne 0} \what{(\V f)}(k/L)  e^{2 \pi \mi k t /L} \right|
&\le \left|\sum_{0 < |k| < n/2} (\beta[k] - \what{(\V f)}(k/L))  e^{2 \pi \mi k t /L} \right|
    + \left|\sum_{|k| \ge n/2} \what{(\V f)}(k/L)  e^{2 \pi \mi k t /L} \right|
\nonumber \\
&\le \sum_{0 < |k| < n/2} \left|\beta[k] - \what{(\V f)}(k/L))  \right|
    + \sum_{|k| \ge n/2} \left|\what{(\V f)}(k/L) \right|
\nonumber \\
&\le C\frac{\log(n)}{n^{r}}.
\end{align}
Taking $t=a$ also shows
\begin{align}
\left| \beta[0] - \what{(\V f)}(0) \right|
= \left|\sum_{0 < |k| < n/2} \beta[k]  e^{2 \pi \mi k a /L}
     - \sum_{k \ne 0} \what{(\V f)}(k/L)  e^{2 \pi \mi k a /L} \right|
\le C \frac{\log(n)}{n^{r}},
\end{align}
and consequently
\begin{align}
|\nu_x(t) - (\V f)(t)|
&\le  \left| \beta[0] - \what{(\V f)}(0) \right|
+ \left|\sum_{0 < |k| < n/2} \beta[k]  e^{2 \pi \mi k s /L}
     - \sum_{k \ne 0} \what{(\V f)}(k/L)  e^{2 \pi \mi k s /L} \right|
\le C\frac{\log(n)}{n^r}.
\end{align}

\end{proof}

We now complete the proof of Proposition \ref{prop:discrete_volterra_1d}.
When $1 \le p < \infty$, from the triangle inequality
\begin{align}
\big| \mathrm{V}_p(x) - \|f\|_{V^p} \big|
&= \left| \left(\frac{L}{n}\sum_{j=0}^{n-1} \left|\nu_x(t_j)  \right|^p \right)^{1/p}
    - \|f\|_{V^p}\right|
\nonumber \\
&\le  \left| \left(\frac{L}{n}\sum_{j=0}^{n-1} \left|\nu_x(t_j)  \right|^p \right)^{1/p}
        - \left(\frac{L}{n}\sum_{j=0}^{n-1} \left|(\V f)(t_j)  \right|^p \right)^{1/p}\right| 
    + \left|\left(\frac{L}{n}\sum_{j=0}^{n-1} \left|(\V f)(t_j)  \right|^p \right)^{1/p}
        - \|f\|_{V^p} \right|
\nonumber \\
&\le  \left(\frac{L}{n}\sum_{j=0}^{n-1} \left|\nu_x(t_j) - (\V f)(t_j)  \right|^p \right)^{1/p}
    + \left|\left(\frac{L}{n}\sum_{j=0}^{n-1} \left|(\V f)(t_j)  \right|^p \right)^{1/p}
        - \|\V f\|_{L^p} \right|,
\end{align}
with obvious modifications when $p=\infty$.
From Corollary \ref{cor:voltapprox},
the first term is $O(\log(n)/n^r)$,
and from Corollary \ref{cor:trapezoidal_pnorm2},
the second term is $O(1/n^2)$.
We have therefore shown that for all $n$ sufficiently large,
\begin{align}
\big| \mathrm{V}_p(x) - \|f\|_{V^p} \big|
\le C \max\left\{\frac{\log(n)}{n^r}, \frac{1}{n^2} \right\},
\end{align}
completing the proof of
Proposition \ref{prop:discrete_volterra_1d}.

\subsubsection{Proof of Proposition \ref{prop:noise_1d}}

First, we note that the third part of the Proposition
(almost sure convergence) follows
immediately from the second part (the concentration bound)
from Lemma \ref{lem:borel_cantelli}.

Now, recall that the vectors $\alpha$ and $\beta$ are defined as follows:
\begin{align}
\alpha[k] = \frac{L}{n} \sum_{j=0}^{n-1} Z[j] e^{-2 \pi \mi k t_j / L},
    \quad 0 \le |k| < n/2,
\end{align}
\begin{align}
\beta[k] =
\frac{\alpha[k]}{2\pi \mi k/L} , \quad 0 < |k| < n/2,
\end{align}
and when $k=0$,
\begin{align}
\beta[0] =
- \sum_{0 < |k| < n/2} \beta[k]  e^{2 \pi \mi k a /L}.
\end{align}

Define the random vector $W$ by $W[j] = \nu_Z(t_j)$, that is,
\begin{align}
W[j] = \frac{1}{L} \sum_{k=-n/2+1}^{n/2-1} \beta[k] e^{2 \pi \mi t_j k /L},
\end{align}
for $0 \le j \le n-1$.
Then for $1 \le p < \infty$,
\begin{align}
\mathrm{V}_p(Z_n) = \left( \frac{L}{n} \sum_{j=0}^{n-1} |W[j]|^p \right)^{1/p},
\end{align}
and $\mathrm{V}_\infty(Z_n) = \|W\|_\infty$.

It will be convenient to define the auxiliary vector $X$ by
\begin{align}
X[j] = \sum_{0 < |k| < n/2} \beta[k] e^{2 \pi \mi t_j k /L}.
\end{align}
Then $W[j] = (X[j] + \beta[0])/L$.

\paragraph{Expectation of $\mathrm{V}_p(Z_n)$.}

For a fixed $0 \le \ell \le n-1$,
\begin{align}
X[\ell] &= \sum_{0 < |k| < n/2} \beta[k] e^{2 \pi \mi t_\ell k /L}
\nonumber \\
&= \frac{L}{2 \pi \mi} \sum_{0 < |k| < n/2} \frac{\alpha[k]}{k} e^{2 \pi \mi t_\ell k /L}
\nonumber \\
&= \frac{L}{2 \pi \mi} \sum_{0 < |k| < n/2} \frac{1}{k}
    \left( \frac{L}{n}\sum_{j=0}^{n-1} Z[j] e^{-2 \pi \mi k t_j / L} \right)
        e^{2 \pi \mi t_\ell k /L}
\nonumber \\
&= \frac{L^2}{2 \pi \mi n} \sum_{j=0}^{n-1} Z[j]
        \sum_{0 < |k| < n/2} \frac{e^{2 \pi \mi k (\ell-j) / n}}{k}
\nonumber \\
&= \frac{L^2}{\pi n} \sum_{j=0}^{n-1} Z[j]  \sum_{k=1}^{n/2-1} \frac{\sin(2 \pi (\ell-j) k /n)}{k}
\nonumber \\
&= \frac{L^2}{\pi n} \sum_{j=0}^{n-1} Q_n[\ell-j] Z[j],
\end{align}
where
\begin{align}
Q_n[m] = \sum_{k=1}^{n/2-1} \frac{\sin(2 \pi m k /n)}{k}.
\end{align}
By Lemma \ref{lem:trig_sine}, $|Q_n[m]| \le C$, for all $m$ and an absolute constant $C$.
Since the $Z[j]$ are independent and mean zero,
we may apply Proposition \ref{prop:subgaussian_pythagorean}
to get the bound
\begin{align}
\label{eq:variance_x}
\|X[\ell]\|_{\psi_2}^2
&\le C \frac{L^4}{n^2} \sum_{j=0}^{n-1} Q_n[\ell - j]^2 \|Z[j]\|_{\psi^2}^2
\le C L^4 \frac{\sigma^2 }{n}.
\end{align}
A nearly identical proof shows that $\|\beta[0]\|_{\psi_2}^2 \le C L^4 \sigma^2 / n$.
Since $W[j] = (X[j] + \beta[0])/L$,
it follows that, for all $0 \le j \le n-1$,
\begin{align}
\label{eq:variance_w}
\|W[j] \|_{\psi_2} \le C L \frac{\sigma}{\sqrt{n}},
\end{align}
and hence, for any $1 \le p < \infty$,
from Proposition \ref{prop:subgaussian_pnorm},
\begin{align}
\EE[|W[j]|^p] \le \left( C \sqrt{p} L \frac{\sigma}{\sqrt{n}} \right)^{p}.
\end{align}
Consequently,
\begin{align}
\EE[\mathrm{V}_p(Z_n)^p] = \frac{L}{n}\sum_{j=0}^{n-1} \EE[|W[j]|^p]
\le \left( C \sqrt{p} L^{1+1/p} \frac{\sigma}{\sqrt{n}} \right)^{p},
\end{align}
and therefore, using Jensen's inequality,
\begin{align}
\EE[\mathrm{V}_p(Z_n)] \le \left(\EE[\mathrm{V}_p(Z_n)^p] \right)^{1/p}
\le C \sqrt{p} L^{1+1/p} \frac{\sigma}{\sqrt{n}}.
\end{align}

The case for $p=\infty$
follows immediately from Lemma \ref{lem:expected_maximum_subgaussians}.

\paragraph{Concentration of $\mathrm{V}_p(Z_n)$.}

Fix $1 \le p < \infty$,
so that
\begin{align}
\mathrm{V}_p(Z_n)^p = \frac{L}{n} \sum_{j=0}^{n-1} |W[j]|^p.
\end{align}
Since each $\|W[j]\|_{\psi_2} \le CL \sigma / \sqrt{n}$,
therefore $|W[j]|^p$ is sub-Weibull($2/p$) (see Section \ref{sec:subweibull}),
with
\begin{align}
\||W[j]|^p\|_{\psi_{2/p}} \le C^pL^p \sigma^p / n^{p/2},
\end{align}
and so, by Proposition \ref{prop:subweibull_pseudonorm},
\begin{align}
\|\mathrm{V}_p(Z_n)^p\|_{\psi_{2/p}} \le C_p L^{p+1} \sigma^p / n^{p/2}.
\end{align}
Therefore, $\|\mathrm{V}_p(Z_n)\|_{\psi_2} = C_p L^{1+1/p} \sigma / \sqrt{n}$,
and hence, for all $t \ge 0$,
\begin{align}
\Prob\{\mathrm{V}_p(Z_n) \ge t\}
\le 2 \exp\left\{ -C_p \frac{n t^2}{L^{2+2/p} \sigma^2} \right\},
\end{align}
as claimed.

When $p=\infty$,
we may apply Proposition \ref{prop:norm_maximum_subgaussians}
to $\mathrm{V}_\infty(Z_n) = \max_{0 \le j \le n-1} |W[j]|$
to obtain the bound
\begin{align}
\|\mathrm{V}_\infty(Z_n)\|_{\psi_2}
\le C \sqrt{\log(n)} \max_{1 \le j \le n} \|W[j]\|_{\psi_2}
\le C L \sigma \sqrt{\frac{\log(n)}{n}}.
\end{align}
Consequently, for any $t \ge 0$,
\begin{align}
\Prob\{\mathrm{V}_\infty(Z_n) \ge t\}
\le 2 \exp\left\{ -C \frac{n t^2}{L^{2} \sigma^2 \log(n)} \right\},
\end{align}
which is the claimed bound.

\subsection{Robustness to noise in 2D}

We turn now to the discrete approximation
of the sliced Cram\'er distance in 2D, with respect
to the uniform measure over $\S^1$.
We use the discretization described in \cite{shi2025fast}
apply the non-uniform discrete Fourier transform to compute the Radon transform
of the input functions.
Let $R > 0$ and $L = 2R$.
Let $n$ a positive integer; throughout this discussion, we will assume
for simplicity that $n$ is even.
Let $x \in \R^{n^2} = \R^n \times \R^n$;
the reader should think of the entries $x[i,j]$ of $x$
as (possibly noisy) samples from a function $f : \R^2 \to \R$
supported on the disc of radius $R$ centered at the origin,
that is, $x[i,j] \approx f(t_i,t_j)$, $0 \le i, j \le n-1$,
where $t_j = -R + 2R j/ n$.
Throughout the section, we will denote by $\calP_\theta$
the tomographic projection operator onto $(\cos(\theta),\sin(\theta))$.

For $\theta \in [0,\pi)$, define the values
\begin{align}
\label{eq:alpha_2d}
\alpha_\theta[k]
= \frac{L^2}{n^2}\sum_{i,j} x[i,j]
        e^{-2 \pi \mi k (t_i  \cos(\theta) + t_j  \sin(\theta))/L}.
\end{align}
Then for $0 \le |k| < n/2$,
\begin{math}
\alpha_\theta[k]
\approx
\what{f}((k/L)\cos(\theta) , (k/L) \sin(\theta))
= \what{(\calP_{\theta} f)} (k/L).
\end{math}

For $0 < |k| < n/2$, let
\begin{align}
\beta_\theta[k] = \frac{\alpha_\theta[k]}{2 \pi \mi k / L}.
\end{align}
For $k=0$, define
\begin{align}
\beta_\theta[0] =
- \sum_{0 < |k| < n/2} \beta_\theta[k] e^{- 2 \pi \mi k R /L}
= - \sum_{0 < |k| < n/2} \beta_\theta[k] (-1)^k.
\end{align}

Define $\nu_x(t,\theta)$ by
\begin{align}
\label{eq:nu_x_2d}
\nu_x(t,\theta) = \frac{1}{L} \sum_{k=-n/2+1}^{n/2-1} \beta_\theta[k] e^{2 \pi \mi t k /L}.
\end{align}
Then $\nu_x(t,\theta) \approx (\V \calP_\theta f)(t)$.

Let $\theta_\ell = \pi \ell/n $, $\ell = 0,\dots,n-1$.
We define the estimated sliced Volterra norm for $1 \le p < \infty$ to be
\begin{align}
\mathrm{SV}_{p}(x)
= \left(\frac{1}{n} \sum_{\ell=0}^{n-1}
        \frac{L}{n}\sum_{j=0}^{n} |\nu_x(t_j,\theta_\ell)|^p \right)^{1/p},
\end{align}
and
\begin{align}
\mathrm{SV}_{\infty}(x)
= \max_{0 \le j,\ell \le n-1} |\nu_x(t_j,\theta_\ell)|.
\end{align}
Given two vectors $x$ and $y$, define their sliced Cram\'er distance to be
\begin{align}
\what{\SC}_p (x,y) = \mathrm{SV}_{p}(x-y).
\end{align}

\begin{rmk}
Using a non-uniform Fast Fourier Transform (NUFFT)
(for example, see \cite{dutt1993fast, dutt1995fast, fessler2003nonuniform, 
greengard2004accelerating, barnett2021aliasing, barnett2019parallel})
to evaluate the $\alpha_{\theta_\ell}[k]$
and the $\nu_x(t_j,\theta_\ell)$,
the entire computation described here can be performed
at cost $O(n^2 \log n)$.
In our implementation, we use the Flatiron Institute
NUFFT (FINUFFT) \cite{barnett2021aliasing,
barnett2019parallel}.
\end{rmk}

We can now state the main result from this section,
which says that the discrete sliced Cram\'er distance
in 2D
is robust to additive heteroscedastic sub-Gaussian noise:

\begin{thm}
\label{thm:discrete_main2d}
Suppose $f$ and $g$ are $C^r$ functions on $\R^2$, $r > 2$, that are supported on the disc
of radius $R > 0$ centered at the origin, and which
satisfy
$\int_{\R^2} f = \int_{\R^2} g$.
Let $Z[j,k]$, $\wtilde Z[j,k]$, $0 \le j,k \le n-1$,
be independent,
mean-zero sub-Gaussian
random variables with sub-Gaussian norms $\sigma_{jk} = \|Z[j,k]\|_{\psi_2}$
and $\wtilde \sigma_{jk} = \|\wtilde Z[j,k]\|_{\psi_2}$;
and suppose that for all $n$,
$\sigma > 0$ satisfies
\begin{align}
\frac{1}{n^2}\sum_{k=0}^{n-1} \sum_{j=0}^{n-1} \sigma_{jk}^2
    + \frac{1}{n^2}\sum_{k=0}^{n-1} \sum_{j=0}^{n-1} \wtilde \sigma_{jk}^2
\le \sigma^2.
\end{align}
Let $X_n$ and $Y_n$ be vectors in $\R^{n^2}$
with entries $X_n[j,k] = f(t_j,t_k) + Z[j,k]$ and $Y_n[j,k] = g(t_j,t_k) + \wtilde Z[j,k]$.
Then:

\begin{enumerate}

\item
Expected error.
For $1 \le p < \infty$,
there are $A > 0$ and $B = B(p,f) > 0$
such that for
all $\sigma \ge 0$
and $n \ge 2$,
\begin{align}
\label{eq:expectation_2d}
\EE\big|\what{\SC}_p(X_n, Y_n)   - \SC_p(f,g) \big|
    \le A L^{2+1/p} \sqrt{p} \frac{\sigma }{n}
        + \frac{B}{n^2}.
\end{align}
Furthermore, there are $A > 0$ and $B = B(f) > 0$
such that for
all $\sigma \ge 0$
and $n \ge 2$,
\begin{align}
\label{eq:expectation_2d_infty}
\EE\big|\what{\SC}_\infty(X_n, Y_n)   - \SC_\infty(f,g) \big|
    \le A L^{2} \sigma \frac{\sqrt{\log(n)}}{n}
        + \frac{B}{n^2}.
\end{align}

\item
Concentration bound.
For $1 \le p < \infty$,
there is  $C = C(p,f) > 0$
such that for all $\sigma > 0$,
$t \ge 0$,
and $n \ge 2$,
\begin{align}
\Prob\left\{ \big|\what{\SC}_p(X_n, Y_n)   - \SC_p(f,g) \big| \ge t   \right\}
\le 2  \exp\left(- C \frac{n^2 t^2}{\sigma^2} \right).
\end{align}
Furthermore, there is  $C = C(f) > 0$
such that for all $\sigma > 0$,
$t \ge 0$,
and $n \ge 2$,
\begin{align}
\Prob\left\{ \big|\what{\SC}_\infty(X_n, Y_n)   - \SC_\infty(f,g) \big| \ge t   \right\}
\le 2 \exp\left(- C \frac{n^2 t^2}{\sigma^2 \log(n)} \right).
\end{align}

\item 
Almost sure limit.
For all $1 \le p \le \infty$,
\begin{math}
\what{\SC}_p(X_n, Y_n) \to \SC_p(f,g)
\end{math}
almost surely as $n \to \infty$.

\end{enumerate}

\end{thm}

Theorem \ref{thm:discrete_main2d} is a corollary
of the following two results
(the proof is the same as in the 1D case):

\begin{prop}
\label{prop:discrete_volterra_2d}
Suppose $f$ is a $C^r$ function on $\R^2$, $r > 2$, that is supported on the disc
of radius $R > 0$ centered at the origin,
and $\int_{\R^2} f = 0$.
Let $1 \le p \le \infty$,
and let $x \in \R^{n^2}$ have entries $x[j,k] = f(t_j,t_k)$, $0 \le j, k \le n-1$.
Then for all $1 \le p \le \infty$,
there is a constant $C = C(p,f)$ such that
\begin{align}
\left| \mathrm{SV}_p(x) - \|f\|_{SV^p}\right|
    \le \frac{C}{n^2},
\end{align}
for $n \ge 2$.
\end{prop}

\begin{prop}
\label{prop:noise_2d}

Let $Z[j,k]$, $j,k \ge 0$, be independent, mean-zero sub-Gaussian
random variables with sub-Gaussian norms $\sigma_{jk} = \|Z[j,k]\|_{\psi_2}$;
and suppose $\sigma > 0$ satisfies
\begin{align}
\frac{1}{n^2}\sum_{j=0}^{n-1} \sum_{k=0}^{n-1}\sigma_{jk}^2 \le \sigma^2,
\end{align}
for all $n$.
For each $n \ge 2$, let $Z_n \in \R^{n^2}$ have entries $Z[j,k]$.
Then:

\begin{enumerate}

\item
Expectation bound.
There is a universal constant $C>0$ such that for
all $1 \le p < \infty$,
$\sigma \ge 0$,
and
$n \ge 2$,
\begin{align}
\EE\left[ \mathrm{SV}_p(Z_n) \right] \le C L^{2+1/p} \sqrt{p} \,  \frac{\sigma}{n},
\end{align}
and
\begin{align}
\EE\left[ \mathrm{SV}_\infty(Z_n) \right] \le C L^{2} \sigma \frac{\sqrt{\log(n)}}{n}.
\end{align}

\item
Concentration bound.
For $1 \le p < \infty$,
there is a value $C = C(p)$
such that for all $\sigma > 0$,
$t \ge 0$,
and $n \ge 2$,
\begin{align}
\Prob\left\{ \mathrm{SV}_p(Z_n) \ge t   \right\}
\le 2  \exp\left(- C \frac{n^2 t^2}{L^{4+2/p}\sigma^2} \right).
\end{align}
Furthermore, there is a universal constant $C>0$
such that for all $\sigma > 0$,
$t \ge 0$,
and $n \ge 2$,
\begin{align}
\Prob\left\{ \mathrm{SV}_\infty(Z_n) \ge t \right\}
\le 2 \exp\left(- C \frac{n^2 t^2}{L^{4} \sigma^2 \log(n)} \right)
\end{align}
for $2 < p \le \infty$.

\item 
Almost sure limit.
For all $1 \le p \le \infty$,
\begin{math}
\mathrm{SV}_p(Z_n) \to 0
\end{math}
almost surely as $n \to \infty$.

\end{enumerate}
\end{prop}

\subsubsection{Proof of Proposition \ref{prop:discrete_volterra_2d}}

\begin{lem}
\label{lem:approx_alpha_2d}
Let $f$ satisfy the conditions of Proposition \ref{prop:discrete_volterra_2d},
and let $\alpha_\theta$ be defined as in \eqref{eq:alpha_2d}.
Then there is a $C>0$ such that
\begin{align}
\left| \alpha_\theta[m] - \what{(\calP_\theta f)}(m/L) \right| \le \frac{C}{n^r}
\end{align}
for all $0 \le \theta < \pi$, $n \ge 2$, and $|m| < n/2$.

\end{lem}

\begin{proof}
This is an immediate consequence of Lemma \ref{lem:fourier_decay_approx}.

\end{proof}

\begin{cor}
\label{cor:nu_bound_2d}
Let $f$ satisfy the conditions of Proposition \ref{prop:discrete_volterra_2d},
and let $\nu_x$ be defined as in \eqref{eq:nu_x_2d}.
Then there is a $C>0$ such that
\begin{align}
|\nu_x(t,\theta) - (\V \calP_\theta f)(t)| \le C\frac{\log(n)}{n^r}
\end{align}
for all $0 \le \theta < \pi$ and $|t| < L$.

\end{cor}

\begin{proof}
For all $t$ and $\theta$,
applying Lemma \ref{lem:approx_alpha_2d} gives
\begin{align}
\left| \frac{1}{L} \sum_{0 < |k| \le n/2-1} \beta_\theta[k] e^{2 \pi \mi t k /L}
        - \frac{1}{L} \sum_{0 < |k| \le n/2-1} \frac{\what{(\calP_\theta f)}(k/L)}{2 \pi \mi k/L}
                e^{2 \pi \mi t k /L} \right|
\le \frac{C}{n^r} \sum_{0 < |k| \le n/2-1} \frac{1}{|k|}
\le C\frac{\log(n)}{n^r},
\end{align}
and the tail can be bounded
\begin{align}
\left| \sum_{|k| \ge n/2} \frac{\what{(\calP_\theta f)}(k/L)}{2 \pi \mi k}
                e^{2 \pi \mi t k /L}\right|
= \left| \sum_{|k| \ge n/2} \frac{\what{f}((k/L)\cos(\theta), (k/L)\sin(\theta))}{2 \pi \mi k}
                e^{2 \pi \mi t k /L}\right|
\le C \sum_{|k| \ge n/2} \frac{1}{k^{r+1}}
\le C \frac{1}{n^r}.
\end{align}

Similarly, for all $\theta$,
\begin{align}
\left| \beta_\theta[0] - \what{(\V \calP_\theta f)}(0) \right|
\le  \sum_{0 < |k| \le n/2-1} \left|\beta_\theta[k]
        - \frac{\what{(\calP_\theta f)}(k/L)}{2 \pi \mi k /L}\right|
    + \sum_{|k| \ge n/2} \left| \frac{\what{(\calP_\theta f)}(k/L)}{2 \pi \mi k/L} \right|
\le C \frac{\log(n)}{n^r}.
\end{align}

It then follows that for all $t$ and $\theta$,
\begin{align}
|\nu_x(t,\theta) - (\V \calP_\theta f)(t)| \le C \frac{\log(n)}{n^r},
\end{align}
where $C$ does not depend on $\theta$, $t$ or $n$.

\end{proof}

For brevity, let $G(y,\theta) = (\V \calP_\theta f)(y)$, i.e.\
\begin{align}
G(y,\theta)
= \int_{-R}^{y} \int_{-R}^{R} f(s \cos(\theta) + t \sin(\theta),
    t \cos(\theta) - s \sin (\theta)) \,dt \, ds.
\end{align}
Then from Corollary \ref{cor:nu_bound_2d},
\begin{align}
\label{eq:504001}
\left| \mathrm{SV}_p(x)
    - \left(\frac{1}{n}\sum_{\ell=0}^{n-1} \frac{L}{n} 
        \sum_{j=0}^{n-1} |G(t_j,\theta_\ell)|^p \right)^{1/p} \right|
&= \left| \left(\frac{1}{n} \sum_{\ell=0}^{n-1}
    \frac{L}{n}\sum_{j=0}^{n-1} |\nu_x(t_j,\theta_\ell)|^p \right)^{1/p}
    - \left(\frac{1}{n}\sum_{\ell=0}^{n-1} \frac{L}{n} 
        \sum_{j=0}^{n-1} |G(t_j,\theta_\ell)|^p \right)^{1/p} \right|
\nonumber \\
&\le \left(\frac{1}{n} \sum_{\ell=0}^{n-1}
    \frac{L}{n}\sum_{j=0}^{n-1} |\nu_x(t_j,\theta_\ell) - G(t_j,\theta_\ell)|^p \right)^{1/p}
\nonumber \\
&\le C\frac{\log(n)}{n^r},
\end{align}
with the obvious modifications when $p=\infty$.

Define the function
\begin{math}
I_p(\theta)
= \|G(\cdot,\theta)\|_{L^p(dt)}.
\end{math}
We then have the following lemma:

\begin{lem}
\label{lem:error_svp}
Let $f$ satisfy the conditions of Proposition \ref{prop:discrete_volterra_2d}.
For all $1 \le p \le \infty$,
there is an $N_p$ such that for all $n \ge N_p$,
\begin{align}
\left| \left(\frac{1}{n}\sum_{\ell=0}^{n-1} I_p(\theta_\ell)^p \right)^{1/p}
    - \|f\|_{SV^p} \right|
\le C L^{2+1/p} \left\|\frac{\partial^2 G}{\partial \theta^2} \right\|_{L^\infty}
    \frac{1}{n^2},
\end{align}
with the obvious modification when $p=\infty$,
where $C$ is a universal constant.

\end{lem}

\begin{proof}
First suppose $1 \le p < \infty$.
For each $k \ne 0$,
\begin{align}
\int_{0}^{2\pi} I_p(\theta)^p e^{- \mi k \theta} \, d\theta
= \int_{0}^{2 \pi}  \left(\int_{-R}^{R} |G(t,\theta)|^p \,dt \right) e^{- \mi k \theta} \, d\theta
= \int_{-R}^{R} \int_{0}^{2 \pi}  |G(t,\theta)|^p e^{- \mi k \theta} \,d\theta  \, dt,
\end{align}
and by Lemma \ref{lem:fourier_pth_power}, for all $t$ we have the bound
\begin{align}
\left| \int_{0}^{2 \pi}  |G(t,\theta)|^p e^{- \mi k \theta} \,d\theta \right|
\le C \frac{ L^2 p }{k^2}\left\|\frac{\partial^2 G}{\partial \theta^2} \right\|_{L^\infty}
    \int_{0}^{2\pi} |G(t,\theta)|^{p-1} d\theta,
\end{align}
where $C$ is universal;
therefore,
\begin{align}
\left| \int_{0}^{2 \pi} I_p(\theta)^p e^{- \mi k \theta} \, d\theta \right|
\le C \frac{ L^2 p }{k^2}\left\|\frac{\partial^2 G}{\partial \theta^2} \right\|_{L^\infty}
    \int_{-R}^{R} \int_{0}^{2\pi} |G(t,\theta)|^{p-1} \, d\theta \, dt.
\end{align}

By Lemma \ref{lem:fourier_series_approx},
it then follows that, for all $1 \le p < \infty$,
\begin{align}
\left| \frac{1}{2n}\sum_{\ell=0}^{2n-1} I_p(2 \pi \ell / 2n)^p
    - \frac{1}{2\pi}\int_{0}^{2\pi} I_p(\theta)^p \, d\theta\right|
\le C \frac{ L^2 p }{n^2}\left\|\frac{\partial^2 G}{\partial \theta^2} \right\|_{L^\infty}
    \int_{-R}^{R} \int_{0}^{2\pi} |G(t,\theta)|^{p-1} \, d\theta \, dt,
\end{align}
and, since $I_p$ is $\pi$-periodic, we have
\begin{align}
\frac{1}{2n}\sum_{\ell=0}^{2n-1} I_p(2 \pi \ell / 2n)^p
= \frac{1}{n}\sum_{\ell=0}^{n-1} I_p(\pi \ell / n)^p
= \frac{1}{n}\sum_{\ell=0}^{n-1} I_p(\theta_\ell)^p,
\end{align}
so that
\begin{align}
\label{eq:6050201-1}
\left| \frac{1}{n}\sum_{\ell=0}^{n-1} I_p(\theta_\ell)^p
    - \|f\|_{SV^p}^p \right|
&= \left| \frac{1}{n}\sum_{\ell=0}^{n-1} I_p(\theta_\ell)^p
    - \frac{1}{2\pi}\int_{0}^{2\pi} I_p(\theta)^p \, d\theta\right|
\nonumber \\
&\le C \frac{ L^2 p }{n^2}\left\|\frac{\partial^2 G}{\partial \theta^2} \right\|_{L^\infty}
    \int_{-R}^{R} \int_{0}^{2\pi} |G(t,\theta)|^{p-1} \, d\theta \, dt
\nonumber \\
&\le C \frac{ p L^{2+1/p} }{n^2}\left\|\frac{\partial^2 G}{\partial \theta^2} \right\|_{L^\infty}
    \left(\int_{-R}^{R} \int_{0}^{2\pi} |G(t,\theta)|^{p} \, d\theta \, dt \right)^{(p-1)/p}
\nonumber \\
&= C \frac{ p L^{2+1/p} }{n^2}\left\|\frac{\partial^2 G}{\partial \theta^2} \right\|_{L^\infty}
    \|f\|_{SV^p}^{p-1}.
\end{align}
Assuming $\|f\|_{SV^p} \ne 0$ (for otherwise $f \equiv 0$ and the proof is trivial),
it follows that for all $n$ sufficiently large,
\begin{align}
\frac{1}{n}\sum_{\ell=0}^{n-1} I_p(\theta_\ell)^p \ge \frac{1}{2} \|f\|_{SV^p}^p,
\end{align}
and consequently, applying the mean value theorem to the function
$y \mapsto y^{1/p}$,
\begin{align}
\left| \left(\frac{1}{n}\sum_{\ell=0}^{n-1} I_p(\theta_\ell)^p \right)^{1/p}
    - \|f\|_{SV^p} \right|
&\le C \frac{\left( \|f\|_{SV^p}^p / 2 \right)^{1/p-1}}{p}
    \left(\frac{ p L^{2+1/p}}{n^2}\left\|\frac{\partial^2 G}{\partial \theta^2} \right\|_{L^\infty}
    \|f\|_{SV^p}^{p-1} \right)
\nonumber \\
&\le C L^{2+1/p} \left\|\frac{\partial^2 G}{\partial \theta^2} \right\|_{L^\infty}
    \frac{1}{n^2},
\end{align}
as claimed.

When $p = \infty$: fix any $t$ in $[-R,R]$. Then
from Corollary \ref{cor:trapezoidal_pnorm2}
\begin{align}
\left| \max_{0 \le \ell \le n-1} |G(t,\theta_\ell)|
    - \max_{0 \le \theta \le \pi} |G(t,\theta)| \right|
\le C L^2 \left\|\frac{\partial^2 G}{\partial \theta^2} \right\|_{L^\infty}
    \frac{1}{n^2},
\end{align}
and so
\begin{align}
\left|\max_{0 \le \ell \le n-1}  I_\infty(\theta_\ell) - \|f\|_{SV^\infty}\right|
&= \left| \max_{0 \le \ell \le n-1}  \max_{-R \le t \le R} |G(t,\theta_\ell)|
    - \max_{0 \le \theta \le \pi}  \max_{-R \le t \le R} |G(t,\theta)| \right|
\nonumber \\
&= \left| \max_{-R \le t \le R} \max_{0 \le \ell \le n-1}  |G(t,\theta_\ell)|
    - \max_{-R \le t \le R}  \max_{0 \le \theta \le \pi} |G(t,\theta)| \right|
\nonumber \\
&\le \max_{-R \le t \le R} \left| \max_{0 \le \ell \le n-1} |G(t,\theta_\ell)|
    - \max_{0 \le \theta \le \pi} |G(t,\theta)| \right|
\nonumber \\
&\le C L^2 \left\|\frac{\partial^2 G}{\partial \theta^2} \right\|_{L^\infty}
    \frac{1}{n^2},
\end{align}
as claimed.

\end{proof}

\begin{lem}
\label{lem:error_svp2}

Let $f$ satisfy the conditions of Proposition \ref{prop:discrete_volterra_2d}.
For all $1 \le p \le \infty$,
there is an $N_p$ such that for all $n \ge N_p$,
\begin{align}
\left| \left(\frac{2}{n}\sum_{\ell=0}^{n-1}
            \frac{L}{n}\sum_{j=0}^{n-1}|G(t_j,\theta_\ell)|^p \right)^{1/p}
    - \left(\frac{2}{n}\sum_{\ell=0}^{n-1} I_p(\theta_\ell)^p \right)^{1/p}\right|
\le C L^{2+1/p}
    \left\| \frac{\partial^2 G }{\partial t^2} \right\|_{L^\infty} \frac{1}{n^2},
\end{align}
with the obvious modification when $p=\infty$,
where $C$ is a universal constant.

\end{lem}

\begin{proof}
First, suppose $1 \le p < \infty$.
By Corollary \ref{cor:trapezoidal_pnorm}, for each $\theta$,
\begin{align}
\left| \frac{L}{n}\sum_{j=0}^{n-1}|G(t_j,\theta)|^p - I_p(\theta)^p\right|
&=\left| \frac{L}{n}\sum_{j=0}^{n-1}|G(t_j,\theta)|^p
        -  \int_{-R}^{R} |G(t,\theta)|^p dt \right|
\nonumber \\
&\le C \frac{L^{2} p}{n^2}  \left\| \frac{\partial^2 G }{\partial t^2} \right\|_{L^\infty}
    \int_{-R}^{R} |G(t,\theta)|^{p-1} dt,
\end{align}
and therefore,
\begin{align}
\label{eq:6050201}
\left| \frac{1}{n}\sum_{\ell=0}^{n-1}\frac{L}{n}\sum_{j=0}^{n-1}|G(t_j,\theta_\ell)|^p
    - \frac{1}{n}\sum_{\ell=0}^{n-1} I_p(\theta_\ell)^p\right|
&\le C \frac{L^{2} p}{n^2}  \left\| \frac{\partial^2 G }{\partial t^2} \right\|_{L^\infty}
    \left(\frac{1}{n}\sum_{\ell=0}^{n-1}\int_{-R}^{R} |G(t,\theta)|^{p-1} dt \right)
\nonumber \\
&\le C \frac{L^{2+1/p} p}{n^2}  \left\| \frac{\partial^2 G }{\partial t^2} \right\|_{L^\infty}
    \left(\frac{1}{n}\sum_{\ell=0}^{n-1}\int_{-R}^{R} |G(t,\theta)|^{p} dt \right)^{(p-1)/p}
\nonumber \\
&= C \frac{L^{2+1/p} p}{n^2}  \left\| \frac{\partial^2 G }{\partial t^2} \right\|_{L^\infty}
    \left(\frac{1}{n} \sum_{\ell=0}^{n-1} I_p(\theta_\ell)^p \right)^{(p-1)/p}.
\end{align}
By Lemma \ref{lem:error_svp}, for $n$ sufficiently large,
\begin{align}
\frac{1}{2^{1/p}} \|f\|_{SV^p}
\le \left(\frac{1}{n}\sum_{\ell=0}^{n-1} I_p(\theta_\ell)^p \right)^{1/p}
\le 2^{1/p} \|f\|_{SV^p},
\end{align}
and so  if $n$ is also large enough to satisfy
\begin{align}
n^2 \ge 2^{1+1/p} C L^{2+1/p} p \left\| \frac{\partial^2 G }{\partial t^2} \right\|_{L^\infty}
    \|f\|_{SV^p}^{-1}
\end{align}
then it is also true that
\begin{align}
n^2 \ge 2 C L^{2+1/p} p \left\| \frac{\partial^2 G }{\partial t^2} \right\|_{L^\infty}
    \left(\frac{1}{n}\sum_{\ell=0}^{n-1} I_p(\theta_\ell)^p \right)^{-1/p}
\end{align}
and consequently, for all sufficiently large $n$,
\begin{align}
C L^{2+1/p} p \left\| \frac{\partial^2 G }{\partial t^2} \right\|_{L^\infty}
    \left(\frac{1}{n}\sum_{\ell=0}^{n-1} I_p(\theta_\ell)^p \right)^{(p-1)/p} \frac{1}{n^2}
\le \frac{1}{2} \left( \frac{1}{n}\sum_{\ell=0}^{n-1} I_p(\theta_\ell)^p  \right),
\end{align}
in which case \eqref{eq:6050201} implies
\begin{align}
\frac{1}{n}\sum_{\ell=0}^{n-1}\frac{L}{n}\sum_{j=0}^{n-1}|G(t_j,\theta_\ell)|^p
\ge \frac{1}{2} \left( \frac{1}{n}\sum_{\ell=0}^{n-1} I_p(\theta_\ell)^p  \right)
\ge \frac{1}{4} \|f\|_{SV^p}^p.
\end{align}
Since
\begin{align}
\frac{1}{n}\sum_{\ell=0}^{n-1} I_p(\theta_\ell)^p
\ge \frac{1}{2} \|f\|_{SV^p}^p
\ge \frac{1}{4} \|f\|_{SV^p}^p,
\end{align}
by the mean value theorem applied to $y \mapsto y^{1/p}$ it follows
that
\begin{align}
&\left| \left(\frac{1}{n}\sum_{\ell=0}^{n-1}
            \frac{L}{n}\sum_{j=0}^{n-1}|G(t_j,\theta_\ell)|^p \right)^{1/p}
    - \left(\frac{1}{n}\sum_{\ell=0}^{n-1} I_p(\theta_\ell)^p \right)^{1/p}\right|
\nonumber \\
\le \ \ & C \frac{\left( \|f\|_{SV^p}^p / 4 \right)^{1/p-1}}{p}
    \frac{L^{2+1/p} p}{n^2}  \left\| \frac{\partial^2 G }{\partial t^2} \right\|_{L^\infty}
    \left(\frac{2}{n} \sum_{\ell=0}^{n-1} I_p(\theta_\ell)^p \right)^{(p-1)/p} 
\nonumber \\
\le \ \ & C \frac{\left( \|f\|_{SV^p}^p / 4 \right)^{1/p-1}}{p}
    \frac{L^{2+1/p} p}{n^2}  \left\| \frac{\partial^2 G }{\partial t^2} \right\|_{L^\infty}
    \left(2^{1/p} \|f\|_{SV^p} \right)^{(p-1)}
\nonumber \\
\le \ \ & C L^{2+1/p}
    \left\| \frac{\partial^2 G }{\partial t^2} \right\|_{L^\infty} \frac{1}{n^2}  .
\end{align}
This completes the proof when $1 \le p < \infty$.

When $p = \infty$: fix $\theta$ in $[0,\pi]$.
By Corollary \ref{cor:trapezoidal_pnorm2},
\begin{align}
\left| \max_{0 \le j \le n-1} |G(t_j,\theta)|
    - I_\infty(\theta) \right|
= \left| \max_{0 \le j \le n-1} |G(t_j,\theta)|
    - \max_{-R \le t \le R} |G(t,\theta)| \right|
\le C L^2 \left\|\frac{\partial^2 G}{\partial t^2} \right\|_{L^\infty}
    \frac{1}{n^2},
\end{align}
and so
\begin{align}
\left| \max_{0 \le \ell \le n-1} \max_{0 \le j \le n-1} |G(t_j,\theta_\ell)|
    - \max_{0 \le \ell \le n-1} I_\infty(\theta_\ell) \right|
&\le \max_{0 \le \ell \le n-1} \left| \max_{0 \le j \le n-1} |G(t_j,\theta_\ell)|
    - I_\infty(\theta_\ell) \right|
\nonumber \\
&\le C L^2 \left\| \frac{\partial^2 G }{\partial t^2} \right\|_{L^\infty} \frac{1}{n^2},
\end{align}
as claimed.

\end{proof}

Putting together Lemma \ref{lem:error_svp}
and Lemma \ref{lem:error_svp2},
for all $n$ sufficiently large,
\begin{align}
\left| \left(\frac{2}{n}\sum_{\ell=0}^{n-1}
            \frac{L}{n}\sum_{j=0}^{n-1}|G(t_j,\theta_\ell)|^p \right)^{1/p}
        - \|f\|_{SV_p}\right|
\le C L^{2+1/p}
    \left(\left\| \frac{\partial^2 G }{\partial t^2} \right\|_{L^\infty}
        + \left\| \frac{\partial^2 G }{\partial \theta^2} \right\|_{L^\infty} \right)
    \frac{1}{n^2}.
\end{align}
Combining this with \eqref{eq:504001}, for all $n$ sufficiently large
\begin{align}
\left| \mathrm{SV}_p(x) - \|f\|_{SV^p} \right| \le \frac{C}{n^2},
\end{align}
completing the proof of
Proposition \ref{prop:discrete_volterra_2d}.

\subsubsection{Proof of Proposition \ref{prop:noise_2d}}

First, we note that the third part of the Proposition
(almost sure convergence) follows
immediately from the second part (concentration bound)
by using Lemma \ref{lem:borel_cantelli},
as in the 1D case.

Recall that the vectors $\alpha_\theta$ and $\beta_\theta$ are defined as follows:
\begin{align}
\alpha_\theta[k] = \frac{L^2}{n^2}\sum_{i,j} Z[i,j]
        e^{-2 \pi \mi k (t_i \cos(\theta) + t_j \sin(\theta))/L},
    \quad 0 < |k| < n/2,
\end{align}
\begin{align}
\beta_\theta[k] = \frac{\alpha_\theta[k]}{2 \pi \mi k/L},
        \quad 0 < |k| < n/2,
\end{align}
and when $k=0$,
\begin{align}
\beta_\theta[0] =
- \sum_{0 < |k| < n/2} \beta_\theta[k]  (-1)^k.
\end{align}

Define the random vector $W$ by $W[j,\ell] = \nu_Z(t_j,\theta_\ell)$, that is,
\begin{align}
W[j,\ell] = \frac{1}{L} \sum_{k=-n/2+1}^{n/2-1} \beta_{\theta_\ell}[k] e^{2 \pi \mi t_j k /L}
\end{align}
for $0 \le j,\ell \le n-1$.
Then for each $1 \le p < \infty$,
\begin{align}
\mathrm{SV}_p(Z_n) = \left( \frac{L  }{n^2} \sum_{\ell=0}^{n-1} \sum_{j=0}^{n-1}
    |W[j,\ell]|^p \right)^{1/p},
\end{align}
and $\mathrm{SV}_\infty(Z_n) = \|W\|_\infty$.

We define the auxiliary vector $X$ by
\begin{align}
X[j,\ell] = \sum_{0 < |k| < n/2} \beta_{\theta_\ell}[k] e^{2 \pi \mi t_j k /L}.
\end{align}
Note that $W[j,\ell] = (X[j,\ell] + \beta_{\theta_\ell}[0])/L$.

\paragraph{Expectation of $\mathrm{SV}_p(Z_n)$.}
For fixed $0 \le i, \ell \le n-1$,
\begin{align}
X[i,\ell] &= \sum_{0 < |k| < n/2} \beta_{\theta_\ell}[k] e^{2 \pi \mi t_i k /L}
\nonumber \\
&= \frac{L}{2 \pi \mi} \sum_{0 < |k| < n/2} \frac{\alpha_{\theta_\ell}[k]}{k} e^{2 \pi \mi t_i k /L}
\nonumber \\
&= \frac{L^3}{2 \pi \mi n^2} \sum_{0 < |k| < n/2} \frac{1}{k}  \sum_{j,j'} Z[j,j']
        e^{-2 \pi \mi k (t_j \cos(\theta_\ell) + t_{j'} \sin(\theta_\ell))/L}
            e^{2 \pi \mi t_i k /L}
\nonumber \\
&= \frac{L^3}{\pi n^2} \sum_{j,j'} Z[j,j'] \sum_{k=1}^{n/2-1}
        \frac{1}{k} \sin(k 2 \pi(t_j \cos(\theta_\ell) + t_{j'}\sin(\theta_\ell) - t_i)/L).
\nonumber \\
&= \frac{L^3}{\pi n^2} \sum_{j,j'} Z[j,j'] Q_n[i,j,j',\ell],
\end{align}
where
\begin{align}
Q_n[i,j,j',\ell]
= \sum_{k=1}^{n/2-1}
        \frac{1}{k} \sin(k 2 \pi(t_j \cos(\theta_\ell) + t_{j'}\sin(\theta_\ell) - t_i)/L).
\end{align}
From Lemma \ref{lem:trig_sine}, $|Q_n[i,j,j',\ell]| \le C$
for all $i$, $j$, $j'$ and $\ell$ and for a universal constant $C$.
Since the $Z[j,j']$ are independent and mean zero,
we may apply Proposition \ref{prop:norm_maximum_subgaussians}
to get the bound
\begin{align}
\label{eq:variance_x}
\|X[i,\ell]\|_{\psi_2}^2
&\le C \frac{L^6}{n^4} \sum_{j,j'} Q_n[i,j,j',\ell]^2 \|Z[j,j']\|_{\psi^2}^2
\le C L^6 \frac{\sigma^2 }{n^2}.
\end{align}

A nearly identical proof works for $\beta[0]$,
showing that
\begin{align}
\|\beta[0]\|_{\psi_2}^2
\le C L^6 \frac{\sigma^2 }{n^2}.
\end{align}

Since $W[j,\ell] = (X[j,\ell] + \beta_{\theta_\ell}[0])/L$,
it then follows that
\begin{align}
\label{eq:w_variance_2d}
\| W[j,\ell] \|_{\psi_2}
\le C L^2 \frac{\sigma}{n}.
\end{align}
and hence, for any $1 \le p < \infty$,
from Proposition \ref{prop:subgaussian_pnorm},
\begin{align}
\EE[|W[j,\ell]|^p] \le \left( C \sqrt{p} L^2 \frac{\sigma}{n} \right)^{p}.
\end{align}
Consequently,
\begin{align}
\EE[\mathrm{SV}_p(Z_n)^p] = \frac{L}{n^2}\sum_{j,\ell} \EE[|W[j,\ell]|^p]
\le \left( C \sqrt{p} L^{2+1/p} \frac{\sigma}{n} \right)^{p},
\end{align}
and therefore, using Jensen's inequality,
\begin{align}
\EE[\mathrm{SV}_p(Z_n)] \le \left(\EE[\mathrm{SV}_p(Z_n)^p] \right)^{1/p}
\le C \sqrt{p} L^{2+1/p} \frac{\sigma}{n}.
\end{align}
The case when $p=\infty$
follows immediately from Lemma \ref{lem:expected_maximum_subgaussians}.

\paragraph{Concentration of $\mathrm{SV}_p(Z_n)$.}

Fix $1 \le p < \infty$,
so that
\begin{align}
\mathrm{SV}_p(Z_n)^p = \frac{L}{n^2} \sum_{j=0}^{n-1} \sum_{\ell=0}^{n-1}|W[j,\ell]|^p.
\end{align}
Since each $\|W[j,\ell]\|_{\psi_2} \le CL^2 \sigma / n$,
therefore, by Lemma \ref{lem:weibull_subgaussian},
$\||W[j,\ell]|^p\|_{\psi_{2/p}} \le C^pL^{2p} \sigma^p / n^p$,
and so by Proposition \ref{prop:subweibull_pseudonorm},
\begin{align}
\|\mathrm{SV}_p(Z_n)^p\|_{\psi_{2/p}} \le C_p L^{2p+1} \sigma^p / n^p.
\end{align}
Therefore, $\|\mathrm{SV}_p(Z_n)\|_{\psi_2} \le C_p L^{2+1/p} \sigma / n$,
and hence, for all $t \ge 0$,
\begin{align}
\Prob\{\mathrm{SV}_p(Z_n) \ge t\}
\le 2 \exp\left\{ -C_p \frac{n^2 t^2}{L^{4+2/p} \sigma^2} \right\},
\end{align}
as claimed.

When $p=\infty$,
we use Proposition \ref{prop:norm_maximum_subgaussians}
to get
\begin{align}
\|\mathrm{SV}_\infty(Z_n)\|_{\psi_2}
\le C \sqrt{\log(n)} \max_{0 \le j ,\ell \le n-1} \|W[j,\ell]\|_{\psi_2}
\le C L^2 \sigma \frac{\sqrt{\log(n)}}{n}.
\end{align}
Consequently, for any $t \ge 0$,
\begin{align}
\Prob\{\mathrm{SV}_\infty(Z_n) \ge t\}
\le 2 \exp\left\{ -C \frac{n^2 t^2}{L^4 \sigma^2 \log(n)} \right\},
\end{align}
which is the claimed bound.

\begin{figure}[h]
\centering
\includegraphics[scale=.25]{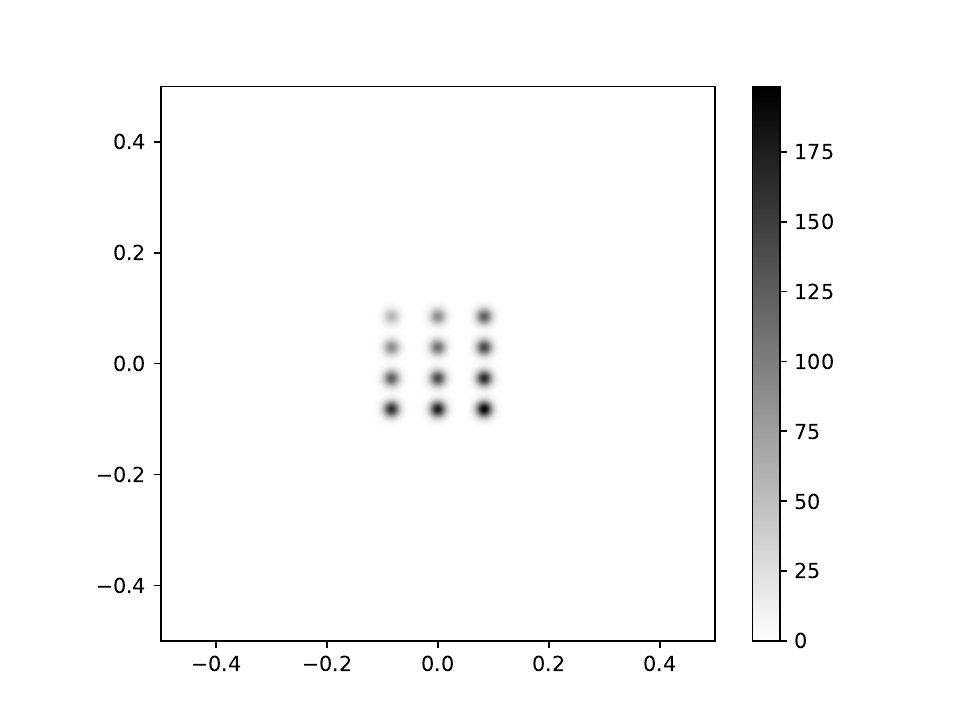}
\includegraphics[scale=.25]{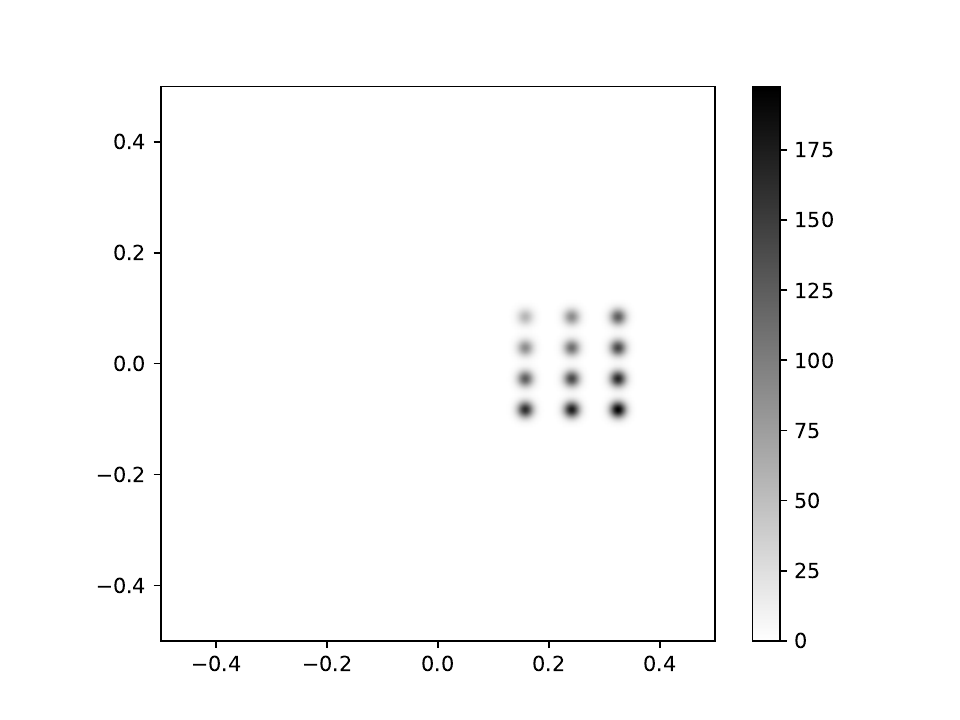}
\includegraphics[scale=.25]{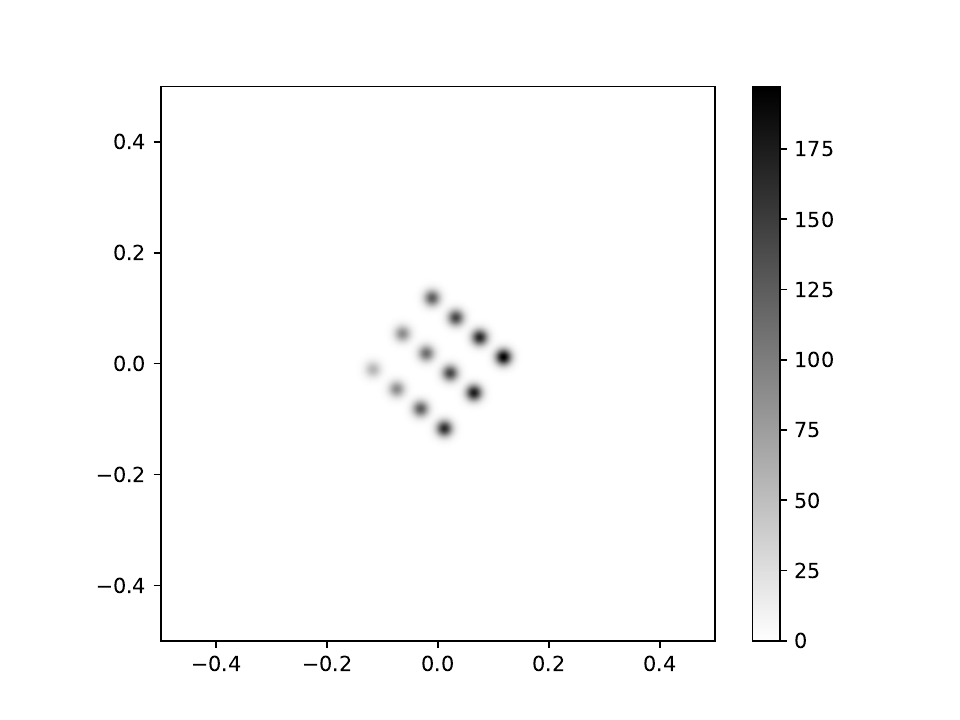}
\includegraphics[scale=.25]{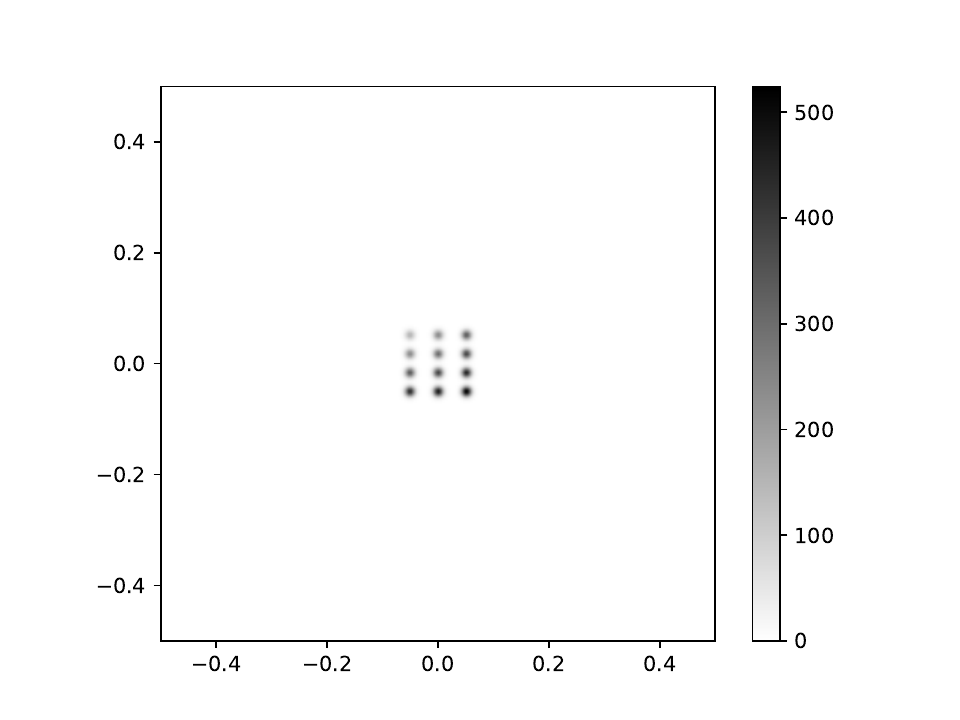}
\caption{The function $f$ described
in Section \ref{sec:numerical_deformations},
and examples of the deformations
applied to $f$.
From left to right:
the original function;
a translation;
a rotation;
a dilation.
}
\label{fig:deformations}
\end{figure}

\begin{figure}[h]
\centering
\includegraphics[scale=.45]{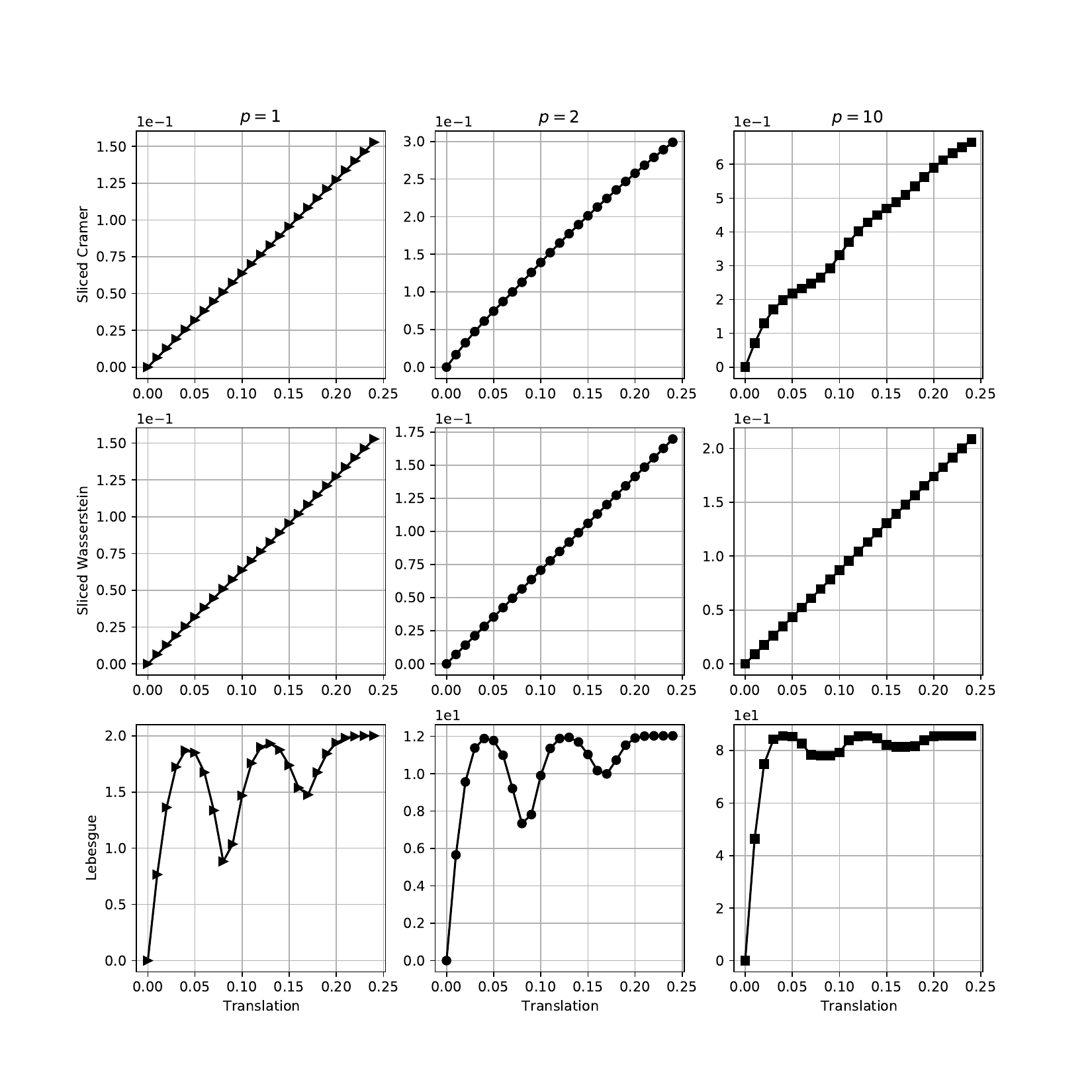}
\caption{Results of the experiment described in Section \ref{sec:numerical_deformations},
showing the distances from $f$ to its translations
as a function of the translation size.}
\label{fig:translations}
\end{figure}

\begin{figure}[h]
\centering
\includegraphics[scale=.45]{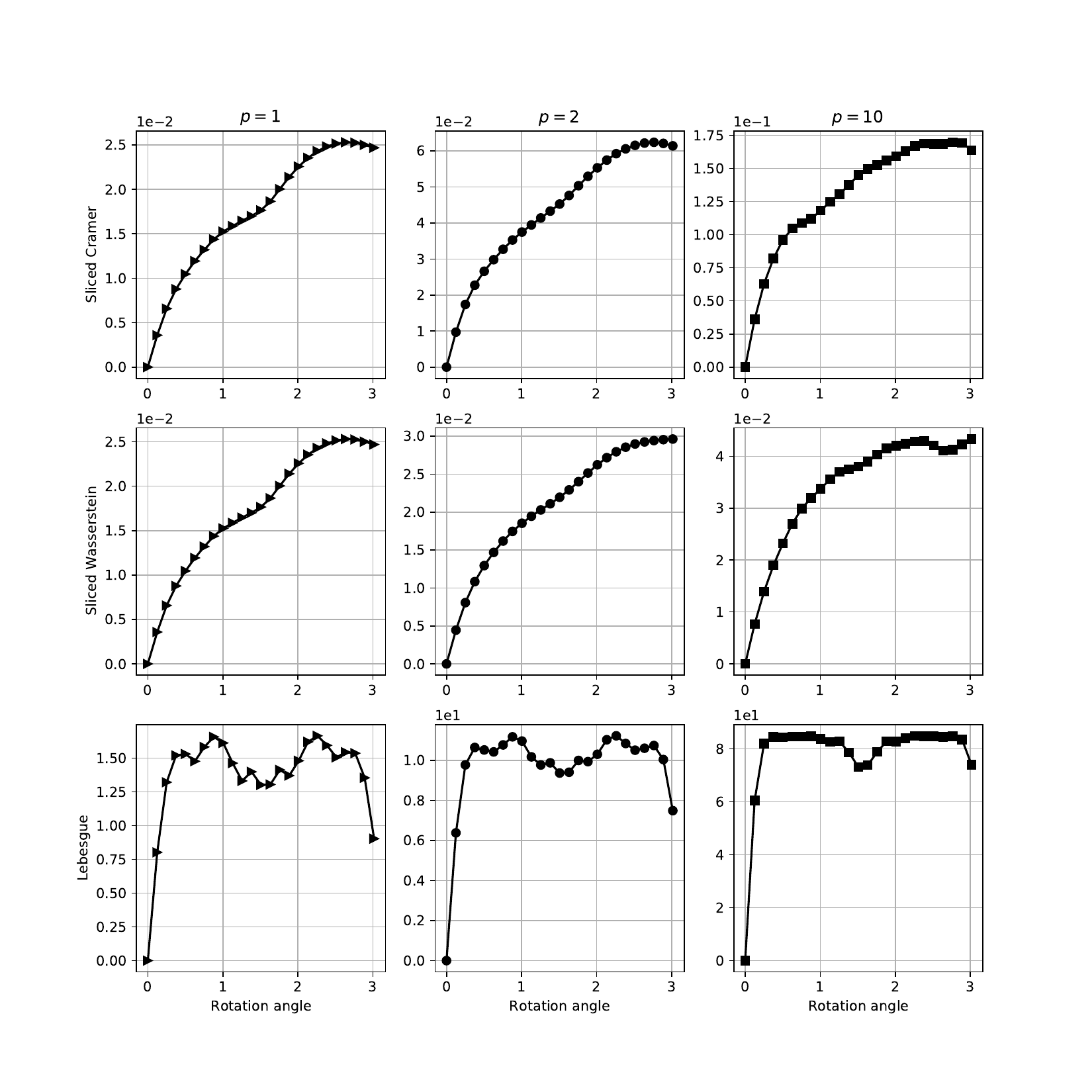}
\caption{Results of the experiment described in Section \ref{sec:numerical_deformations},
showing the distances from $f$ to its rotations
as a function of the rotation angle.}
\label{fig:rotations}
\end{figure}

\begin{figure}[h]
\centering
\includegraphics[scale=.45]{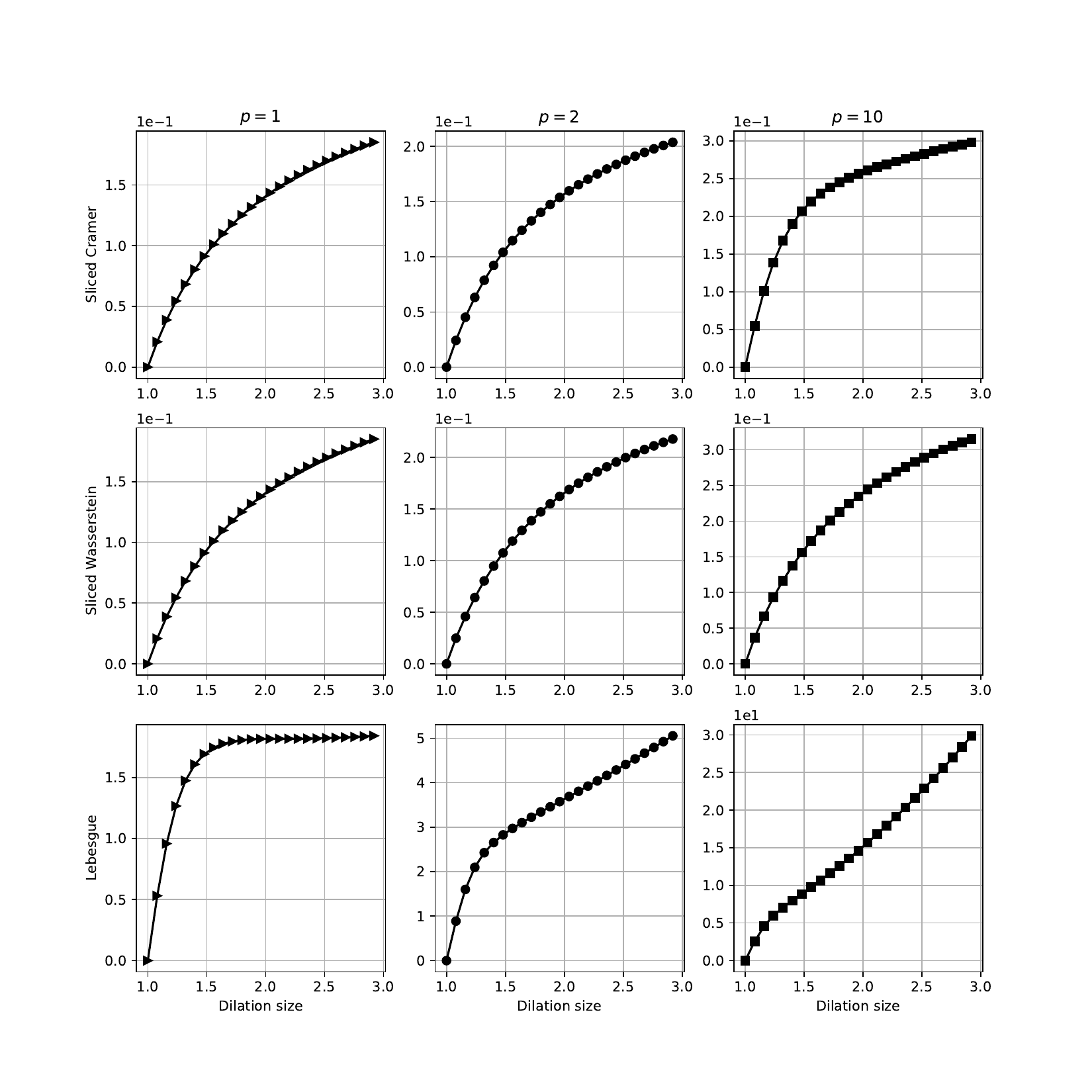}
\caption{Results of the experiment described in Section \ref{sec:numerical_deformations},
showing the distances from $f$ to its dilations
as a function of the dilation size.}
\label{fig:dilations}
\end{figure}

\begin{figure}[h]
\centering
\includegraphics[scale=.35]{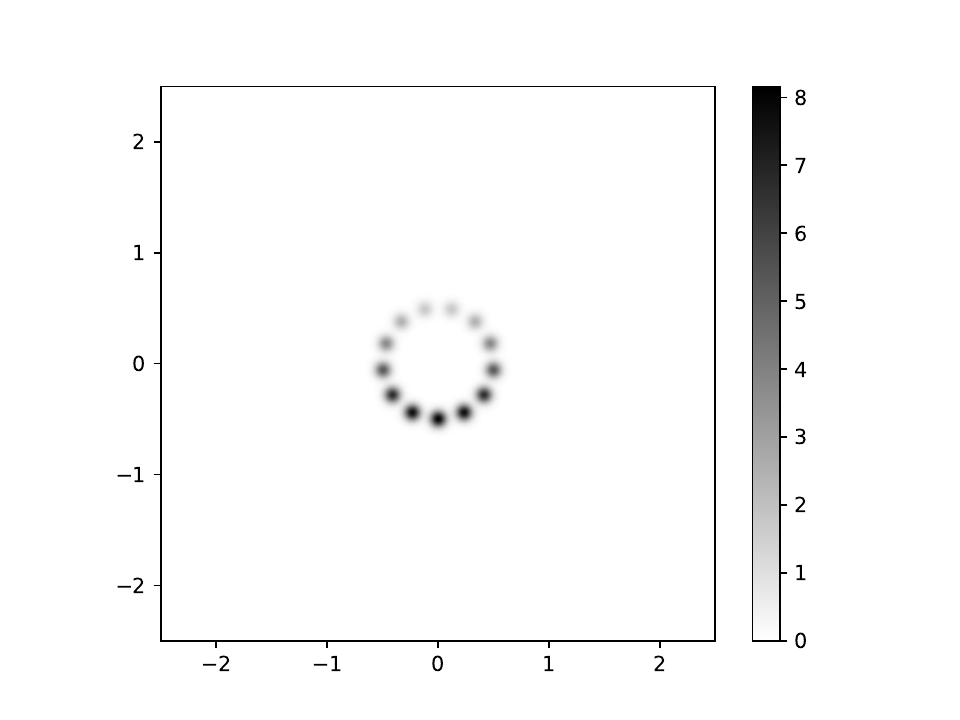}
\includegraphics[scale=.35]{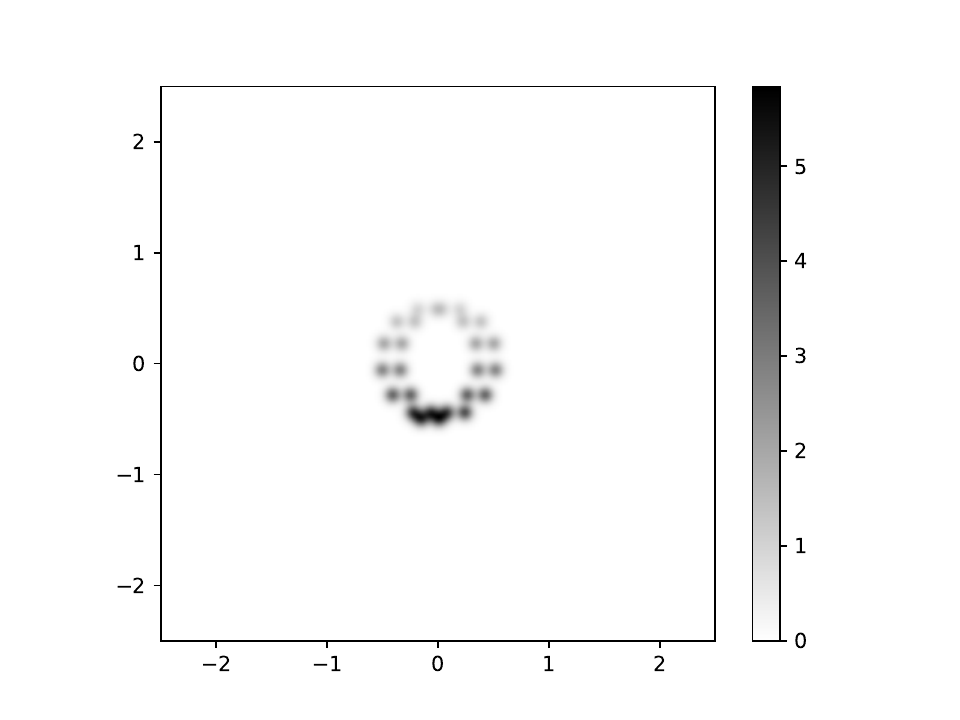}
\caption{Projections used in the experiment described in Section \ref{sec:numerical_rotations3d}.
Left: the projection onto the $xy$-plane of the original spiral function of $f$.
Right: the projection onto the $xy$-plane of a rotation of $f$ within the $yz$-plane.}
\label{fig:projections}
\end{figure}

\begin{figure}[h]
\centering
\includegraphics[scale=.45]{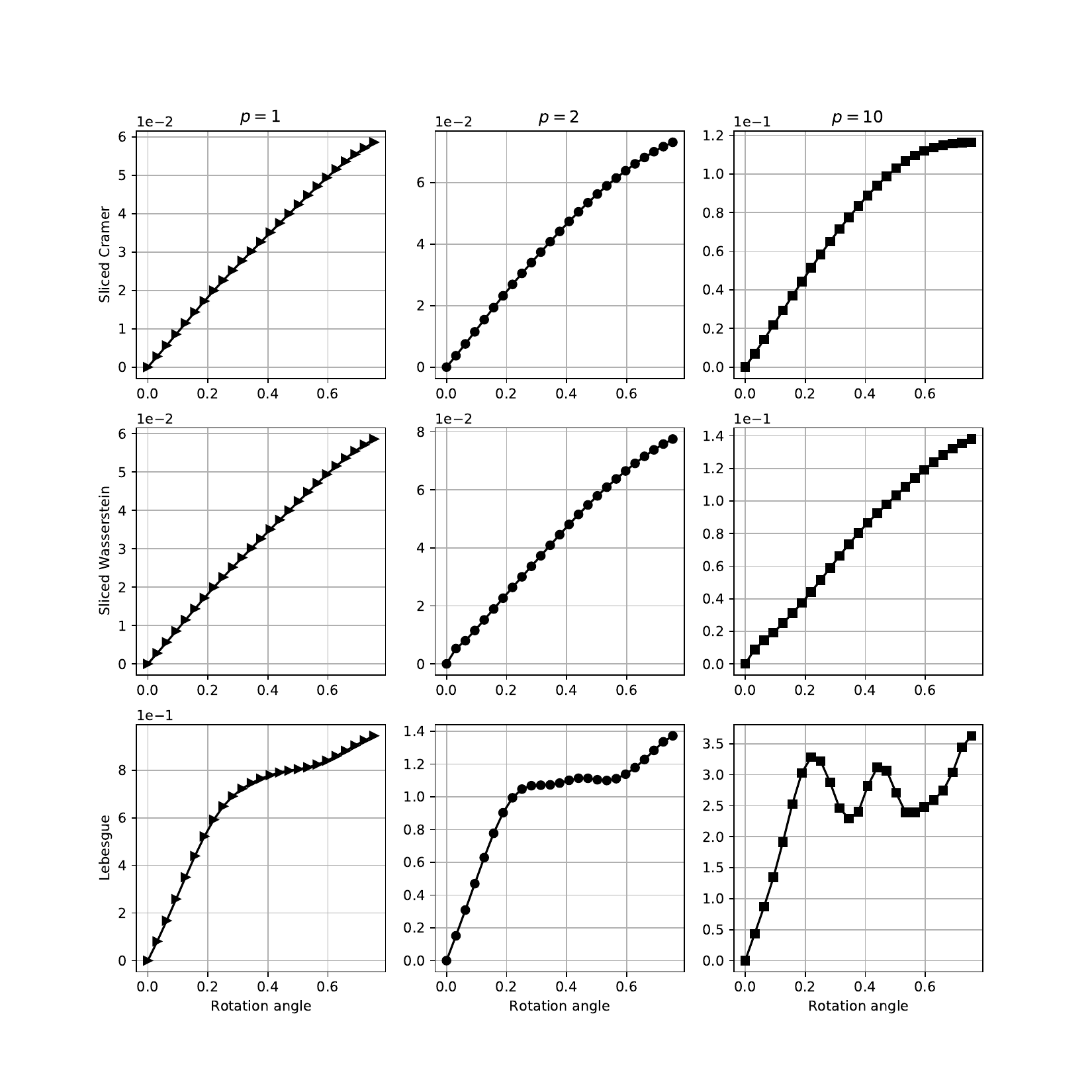}
\caption{Results of the experiment described in Section \ref{sec:numerical_rotations3d},
showing the distances from the projection of $f$ to the projections of
its rotation as a function of the rotation angle.}
\label{fig:distances_projections}
\end{figure}

\begin{figure}[h]
\centering
\includegraphics[scale=.45]{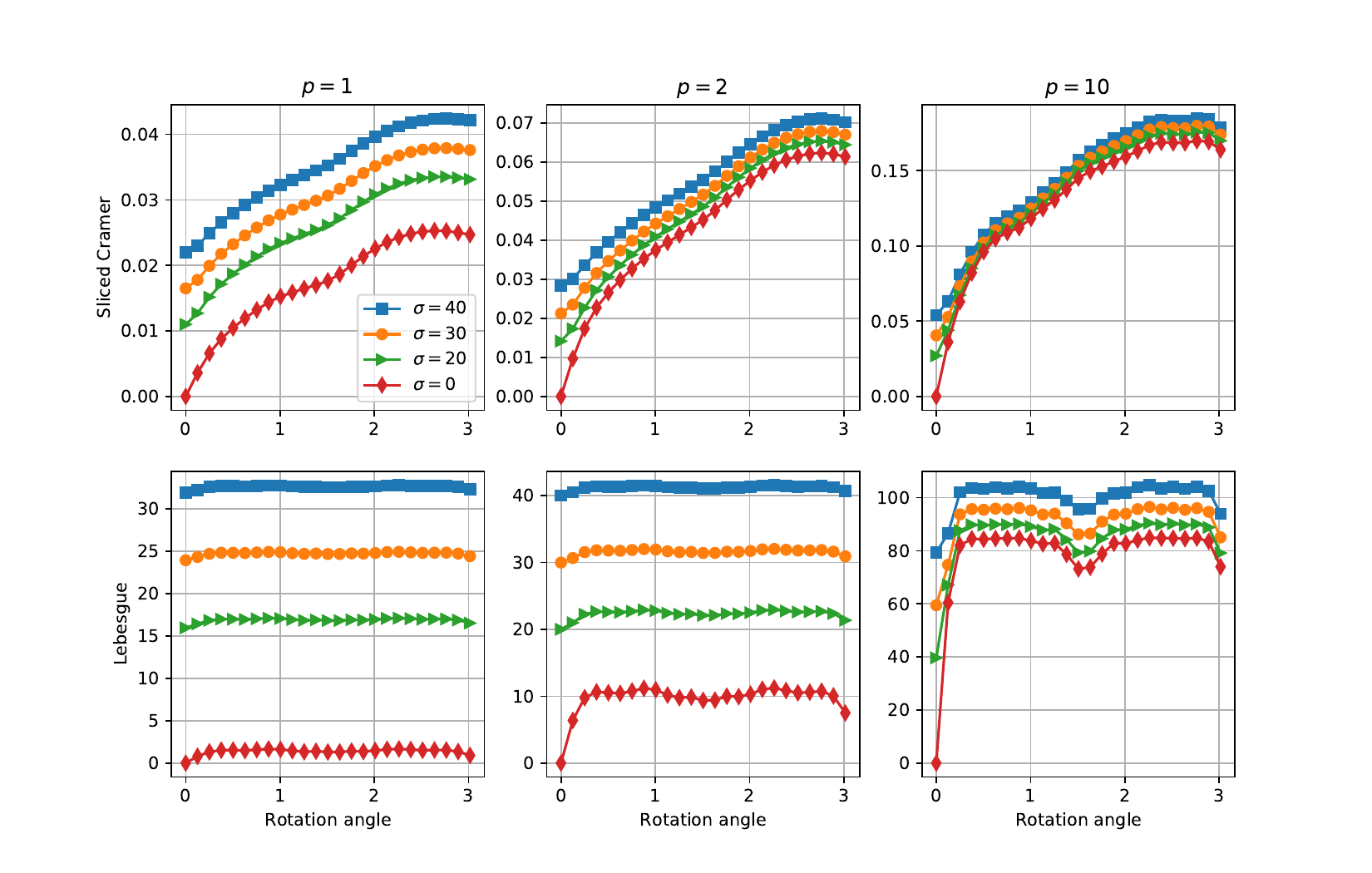}
\caption{Results of the experiment described in Section \ref{section:noise},
showing the distances between $f$ and its noisy rotations
as the rotation angle increases and for different noise levels.}
\label{fig:defnoise}
\end{figure}

\begin{figure}[h]
\centering
\includegraphics[scale=.45]{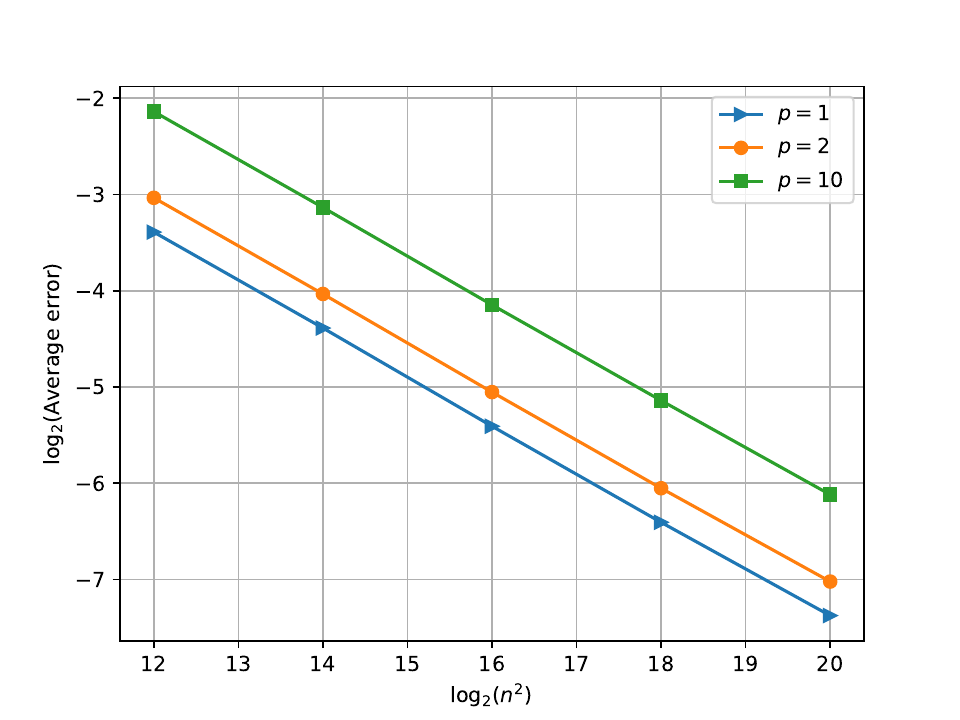}
\includegraphics[scale=.45]{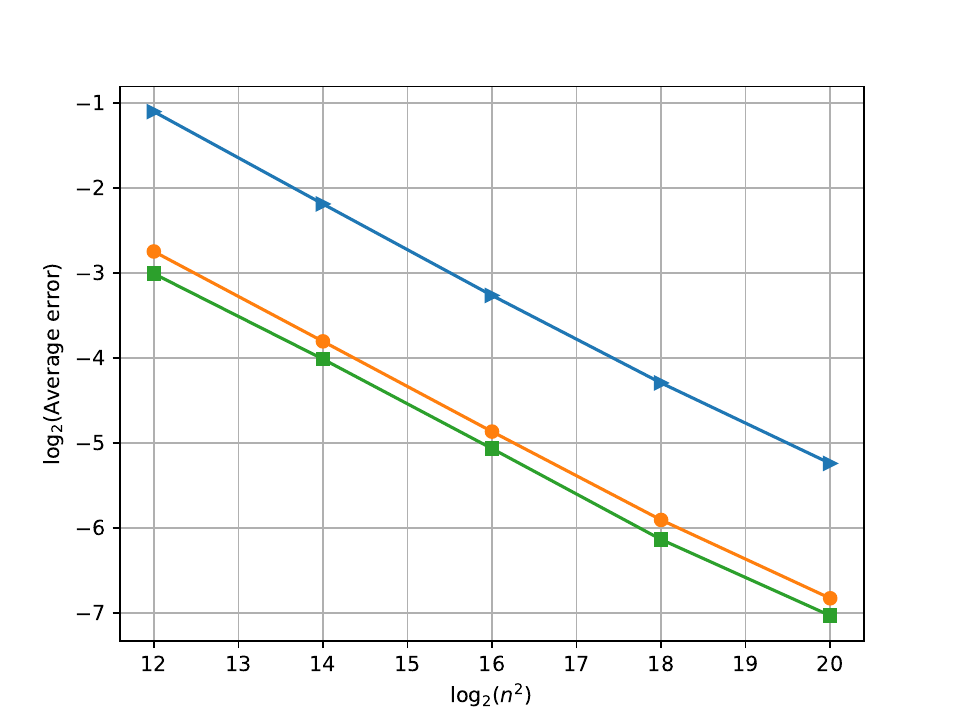}
\caption{Results of the experiment described in Section \ref{section:noise},
showing the average errors as a function of number of samples (in log scale).
Left: sliced Volterra norms of noise alone. Right: average relative errors between the
noisy distances and the true distances.}
\label{fig:signoise}
\end{figure}

\section{Numerical results}
\label{section:numerical}

In this section, we report on numerical
experiments that explore the behavior
of the sliced Cram\'er metrics in 2D under deformations
and noise,
illustrating their behavior on selected examples.
We compare and contrast the behavior of the sliced Cram\'er metrics
with that of the sliced Wasserstein
and Lebesgue distances.
The author's code for the sliced Cram\'er and sliced Wasserstein distances
may be found at
\url{https://github.com/wleeb/SlicedCramer}.

\subsection{Deformations in 2D}
\label{sec:numerical_deformations}

We consider the function $f$
shown in the leftmost panel of Figure \ref{fig:deformations},
which is a convex combination of $12$ isotropic Gaussian functions:
\begin{align}
f(x) = \sum_{i=1}^{3} \sum_{j=1}^{4} w_{ij} e^{-|x - c_{ij}|^2 / \tau},
\end{align}
where $\tau = 1/5000$.
Although $f$ is, strictly speaking, not
compactly supported, it is negligibly small outside of
$[-1/2,1/2] \times [-1/2,1/2]$.
The centers $c_{ij}$ of the Gaussians comprising $f$
are arranged in a
$4$-by-$3$ grid, equispaced in the rectangle
$[-1/12,1/12] \times [-1/12,1/12]$.
The weights $w_{ij}$ assigned to the Gaussians in $f$
may be described as follows:
assigning the numbers $1$ to $4$ to the rows going from top to bottom,
and assigning $1$ to $3$ to the columns going from left to right, the weight assigned
to the Gaussian in position $(i,j)$ is proportional to $\sqrt{i^2 + j^2}$.
These weights are then normalized to sum to $1$,
so that $\|f\|_{L^1} = 1$.

We examine the distances between
$f$ and its deformations.
For each metric $\mathrm{D}$, we compute the distances $\mathrm{D}(f,f_{\Phi_\delta})$,
where $\Phi_\delta$ is a deformation depending on a single parameter $\delta$,
where the displacement of $\Phi_\delta$ grows with $\delta$.
The distances $\mathrm{D}$ are the sliced $p$-Cram\'er, sliced $p$-Wasserstein,
and $p$-Lebesgue, for $p=1,2,10$.
We consider three types of deformations: translations, rotations, and dilations.
Examples of these are displayed in Figure \ref{fig:deformations}.
In all examples, we evaluate the distances for $25$ deformation parameters,
using samples on a $500$-by-$500$ grid.

Figure \ref{fig:translations} shows the distances $\D(f,f_{\Phi_\delta})$
as a function of the translation size $\delta$, where $\Phi_\delta(x,y) = (x+\delta,y)$.
Figure \ref{fig:rotations} shows the distances $\D(f,f_{\Phi_\delta})$
as a function of the rotation angle $\delta$, where
$\Phi_\delta(x,y) = (x \cos(\delta) - y \sin(\delta), x \sin(\delta) + y \cos(\delta))$.
Figure \ref{fig:dilations} shows the distances $\D(f,f_{\Phi_\delta})$
as a function of the dilation parameter $\delta \ge 1$, where
$\Phi_\delta(x) = \delta x$.

From the plots, it is evident that
the sliced $p$-Cram\'er and sliced $p$-Wasserstein distances exhibit similar behavior.
Both metrics also change more smoothly than the Lebesgue distances as the deformation parameter
changes, particularly for translation and rotation.
For dilation, the Lebesgue $1$-distance quickly becomes large and nearly
constant, because the supports of $f$ and its dilations become nearly disjoint
as the dilation size grows. On the other hand, the Lebesgue $p$-distances
for $p > 1$ appear to vary more smoothly with the dilation parameter;
this is because the $p$-norm of
the dilated function grows with the dilation size, and so
the distance in this case is due to the growing size of the single function,
rather than providing any meaningful information about the relationship between
the two functions. By contrast, the sliced $p$-Cram\'er distance
does not grow arbitrarily big as the norm grows; see Remark \ref{rmk:switch}
after the statement of Theorem \ref{thm:main_deformations}.

\subsection{Rotations and projections in 3D}
\label{sec:numerical_rotations3d}

We illustrate
Corollary \ref{cor:main_projections}
by comparing the sliced Cram\'er metrics to the sliced Wasserstein
and Lebesgue distances between 2D projections of rotated volumes.
We consider the function $f$ on $\R^3$ defined as a convex combination
of $26$ spherical Gaussians centered on a spiral circling the $z$-axis:
\begin{align}
f(x) = \sum_{j=1}^{26} w_j e^{-|x-c_j|^2 / \tau},
\end{align}
where $\tau = 1/200$,
and the centers are
located at the points
\begin{align}
c_j = (0.5 \cos(4 \pi j / 26), 0.5 \sin(4 \pi j / 26), -1/2 + j/25),
\end{align}
with respective heights $w_j$ proportional to $1.5 + \cos(4 \pi j / 26)$,
where $0 \le j \le 25$, normalized to sum to $1$, so that $\|f\|_{L^1} = 1$.
The function is rotated within the $yz$-plane, with rotations between $0$ and $\pi/4$,
and then analytically projected onto the $xy$-plane,
sampled on a $500$-by-$500$ grid over $[-r,r] \times [-r,r]$, with $r = 2.5$.
Example projections are displayed in Figure \ref{fig:projections}.
We evaluate the distances based on these samples
as the rotation angle grows; the distances are displayed in Figure
\ref{fig:distances_projections}.
It is evident that the sliced Cram\'er and sliced Wasserstein distances
grow more slowly with the rotation angle,
and exhibit smoother growth,
than do the Lebesgue distances.

\subsection{Robustness to noise}
\label{section:noise}

We examine the robustness of the sliced Cram\'er distances
to additive noise. We consider the function $f$ from
Section \ref{sec:numerical_deformations},
and add Gaussian noise to the samples of rotations of $f$
on a grid of size $n=512$. We then compute both the sliced Cram\'er distances
and the Euclidean distances
between $f$ and the noisy samples of its rotations.
The noise standard deviations are chosen to be $\sigma=20, 30, 40$.
The distances are averaged over $20$
runs of the experiment. The resulting plots are shown in Figure \ref{fig:defnoise}.
It is evident that the sliced Cram\'er distances are quite robust to noise
at this sample size,
in the sense that though the distances
are inflated, they still
closely track the distances between the noiseless functions (the curve where $\sigma=0$).
For reference, we also plot the Lebesgue distances,
which exhibit much greater inflation due to noise.

For a more quantitative exploration of the effect of noise on
the sliced Cram\'er distances, we show the following experiment.
For increasing values of $n$,
we sample the function $f$ and a translate of $f$ by $1/5$ to
the right, which we denote by $g$,
on a grid of size $n$-by-$n$,
and add Gaussian noise with standard deviation $20$ to the samples of $g$.
For $p=1,2,10$, we evaluate the sliced $p$-Cram\'er distances
between $f$ and the noisy samples of $g$.
For each $n$,
the experiment is repeated $M=1000$ times. We estimate the
true distance $d$ between $f$ and $g$ by evaluating
them on a grid of size $2048$-by-$2048$, and measure the average
absolute relative error between the noisy distances $\what{d}_k$ and $d$:
\begin{align}
\err_{n,p}
= \frac{1}{M} \sum_{k=1}^{M} \frac{\left | d - d_k \right|}{d}.
\end{align}
These value are plotted against $n^2$ in the right panel of
Figure \ref{fig:signoise}, in $\log$ scale.
We also measure the average sliced Volterra $p$-norms of the noise itself,
plotted against $n^2$ in the
left panel of Figure \ref{fig:signoise}.
The average errors decay approximately like $O(1/n)$ as $n$ increases
(that is, the slopes of the plots are close to $-1/2$),
consistent with the error rate established in Theorem \ref{thm:discrete_main2d}.

\section{Additional proof details}
\label{section:proof_details}

In this section, we state a number of results
that are used throughout the proofs in the paper.
Some of these results are either well-known or fairly straightforward;
we place them here to avoid interrupting the main text.
We provide proofs of those results which we were
unable to locate in the literature.

\subsection{Fourier analysis and approximation theory}

\begin{lem}
\label{lem:projection_convolution}
Suppose $f$ and $g$ are functions in $L^1(\R^d)$,
and $u$ is a unit vector in $\R^d$.
Then
$\calP_u (f \ast g) = (\calP_u f) \ast (\calP_u g)$.
\end{lem}

\begin{proof}
By the Fourier slice theorem,
\begin{align}
(\calP_u (f \ast g))^{\what{}}(\xi)
&= \what{(f \ast g)}(\xi u)
\nonumber \\
&= \what{f}(\xi u) \what{g}(\xi u)
\nonumber \\
&= \what{(\calP_u f)}(\xi) \what{(\calP_u g)}(\xi)
\nonumber \\
&= [(\calP_u f) \ast (\calP_u g)]^{\what{}}(\xi),
\end{align}
and so, taking inverse Fourier transforms,
$\calP_u (f \ast g) = (\calP_u f) \ast (\calP_u g)$.

\end{proof}

The following is a basic lemma
on the decay and approximation of the Fourier transform
for $C^r$ functions:

\begin{lem}
\label{lem:fourier_decay_approx}
Suppose $f : \R^d \to \R$ is $C^r$, where $r > d$, and is supported in $[0,L]^d$.
Then $|\what{f}(\nu)| = O(|\nu|^{-r})$,
and for any $\xi \in \mathbb{R}^d$ with $\|\xi\|_\infty \le n/(2L)$ and any $n \ge 2$,
\begin{align}
\left|\frac{1}{n^d}\sum_{k \in \{0,\dots,n-1\}^d}
                f(Lk/n) e^{-2 \pi \mi \langle Lk , \xi\rangle / n}
    -  \what{f}(\xi)\right| = O(n^{-r}).
\end{align}
\end{lem}

We will also make use of a special case of the periodic version:
\begin{lem}
\label{lem:fourier_series_approx}

Suppose $f : \R \to \R$ is $L$-periodic, continuous,
and has Fourier series
\begin{align}
f(x) = \frac{1}{L}\sum_{k=-\infty}^{\infty} c_k e^{2 \pi \mi k x / L}
\end{align}
satisfying $|c_k| \le M / |k|^r$ for $r > 1$ and a constant $M > 0$.
Then there is a value $C = C(r) > 0$ such that for all $n \ge 2$,
\begin{align}
\left|\frac{L}{n}\sum_{j = 0}^{n-1} f(jL/n)
    -  \int_{0}^{L} f(x) dx\right| \le C\frac{M}{n^r},
\end{align}
\end{lem}

\begin{proof}
We have:
\begin{align}
\frac{L}{n} \sum_{j=0}^{n-1} f(jL/n)
&= \frac{1}{n} \sum_{j=0}^{n-1} \sum_{k \in \mathbb{Z}} c_k e^{2 \pi \mi k j/n}
\nonumber \\
&= \int_{0}^{L} f(x) dx
    + \sum_{k \ne 0} c_k \left( \frac{1}{n} \sum_{j=0}^{n-1} e^{2 \pi \mi k j/n} \right)
\nonumber \\
&= \int_{0}^{L} f(x) dx
    + \sum_{k = 0\, \mathrm{mod} \, n, \, k \ne 0} c_k,
\end{align}
and consequently
\begin{align}
\left| \frac{L}{n}\sum_{j=0}^{n-1} f(t_j) - \int_{0}^{L}f(x)dx\right|
\le \sum_{\ell \ne 0} |c_{\ell n}|
\le M\sum_{\ell \ne 0} \frac{1}{|\ell n|^r}
= \frac{M}{n^r} \left(2 \sum_{\ell =1}^{\infty} \ell^{-r}\right).
\end{align}

\end{proof}

Next, we state several results that bound the error
on the approximation
the $L^p$ norm of a periodic function
using the trapezoidal rule.

\begin{lem}
\label{lem:fourier_pth_power}
Suppose $G: \R \to \R$ is $L$-periodic and $C^2$.
Then for any $1 \le p < \infty$ and integer $k \ne 0$,
\begin{align}
\left| \int_{0}^{L} |G(t)|^p \,e^{-2 \pi \mi k t / L} \, dt \right|
\le C \frac{L^2 p }{k^2}\|G''\|_{L^\infty} \int_{0}^{L} |G(t)|^{p-1} dt,
\end{align}
where $C>0$ is a universal constant.
\end{lem}

\begin{proof}
First, if $\sign(G(t))$ is constant,
then the function $|G(t)|^p$ is $C^2$,
and the proof we give below can be simplified to give the desired
bound, using integration-by-parts twice; we will skip the details
of this case. We will therefore assume
that $G$ has at least one root,
but is also not constantly zero.
In this case, because $G$ is $L$-periodic,
we can assume without loss of generality
that $G(0) = G(L) = 0$.

Let $A = \{t \in (0,L): G(t) \ne 0\}$.
This set is open and non-empty, hence can be written
as the disjoint union of at most countably many disjoint intervals $(a_j,b_j)$.
Note that $\sign(G(t))$ is constant for $t \in (a_j,b_j)$.
Also, note that $G(a_j) = G(b_j) = 0$ for all $j$; indeed,
if $a_j = 0$, then $G(a_j) = 0$ by assumption; and
if $a_j > 0$ but $G(a_j) \ne 0$,
then there would be some $j'$ with $a_{j} \in (a_{j'},b_{j'}) \subset A$;
but this contradicts that the intervals $(a_j,b_j)$
and $(a_{j'},b_{j'})$ are disjoint.

Let $H_p(x) = |G(x)|^p$.
Suppose first that $p > 1$.
Then on each interval $(a_j,b_j)$,
\begin{align}
H_p'(x) = p\ \sign(G(x)) |G(x)|^{p-1} G'(x),
\end{align}
and
\begin{align}
H_p''(x) = p(p-1)|G(x)|^{p-2} G'(x)^2 + p \ \sign(G(x)) |G(x)|^{p-1} G''(x).
\end{align}
Since $H_p(0) = H_p(L)$, we have, for all $k \ne 0$,
\begin{align}
\label{eq:5040201}
\int_{0}^{L} H_p(x) e^{-2 \pi \mi k x / L} \, dx
&= \frac{L}{2 \pi \mi k} \int_{0}^{L} H_p'(x) e^{-2 \pi \mi k x/L} \, dx.
\end{align}

Let $\chi(x,j)$ be $1$ if $x \in (a_j,b_j)$, and $0$ otherwise.
We then write
\begin{align}
\int_{0}^{L} H_p'(x) e^{-2 \pi \mi k x/L} \, dx
= \int_{0}^{L} \sum_{j} \chi(x,j) H_p'(x) e^{-2 \pi \mi k x/L} \, dx,
\end{align}
and since
\begin{align}
\le \int_{0}^{L} \sum_{j} \chi(x,j) |H_p'(x)|  \, dx
\le L \|H_p'\|_{L^\infty} < \infty,
\end{align}
we may use Fubini's Theorem to switch the order of summation and integration
to obtain
\begin{align}
\int_{0}^{L} H_p'(x) e^{-2 \pi \mi k x/L} \, dx
&= \int_{0}^{L} \sum_{j} \chi(x,j) H_p'(x) e^{-2 \pi \mi k x/L} \, dx
\nonumber \\
&= \sum_{j}  \int_{0}^{L} \chi(x,j) H_p'(x) e^{-2 \pi \mi k x/L} \, dx
\nonumber \\
&= \sum_{j}  \int_{a_j}^{b_j} H_p'(x) e^{-2 \pi \mi k x/L} \, dx.
\end{align}

We now bound this sum.
Using integration-by-parts again gives
\begin{align}
\int_{a_j}^{b_j} H_p'(x) e^{-2 \pi \mi k x/L} \, dx
&= \frac{H_p'(a_{j}^+) e^{-2 \pi \mi k a_j /L} - H_p'(b_{j}^-) e^{-2 \pi \mi k b_{j} /L}}
        {2 \pi \mi k / L}
    + \frac{L}{2 \pi \mi k}\int_{a_j}^{b_{j}} H_p''(x) e^{-2 \pi \mi k x/L} \, dx
\nonumber \\
&= \frac{L}{2 \pi \mi k}\int_{a_j}^{b_{j}} H_p''(x) e^{-2 \pi \mi k x/L} \, dx,
\end{align}
where the end terms vanish because $G(a_j) = G(b_{j}) = 0$
and $H_p'(x) = p \,\sign(G(x)) |G(x)|^{p-1} G'(x)$.
Now, suppose without loss of generality that $G(x) > 0$ on $(a_j,b_{j})$
(an analogous argument will work if $G(x) < 0$).
Then
\begin{align}
\left|\int_{a_j}^{b_{j}} H_p''(x) e^{-2 \pi \mi k x/L} \, dx \right|
\le p\int_{a_j}^{b_{j}} (p-1) G(x)^{p-2} G'(x)^2 dx
    + p\int_{a_j}^{b_{j}} G(x)^{p-1} |G''(x)| dx.
\end{align}

For the first integral, we
let $D(x) = G(x)^{p-1}$, so that $D'(x) = (p-1) G(x)^{p-2} G'(x)$;
then using integration-by-parts,
\begin{align}
p \int_{a_j}^{b_{j}} (p-1) G(x)^{p-2} G'(x)^2 dx
&= p\int_{a_j}^{b_{j}} D'(x) G'(x) dx
\nonumber \\
&= p (D(b_{j}) G'(b_{j}) - D(a_{j}) G'(a_{j}))
    - p\int_{a_j}^{b_{j}} D(x) G''(x) dx
\nonumber \\
&= - p\int_{a_j}^{b_{j}} G(x)^{p-1} G''(x) dx,
\end{align}
and therefore,
\begin{align}
\left|\int_{a_j}^{b_{j}} H_p''(x) e^{-2 \pi \mi k x/L} \, dx \right|
\le 2 p\int_{a_j}^{b_{j}} |G(x)|^{p-1} |G''(x)| dx
\le 2 p \|G''\|_{L^\infty}\int_{a_j}^{b_{j}} |G(x)|^{p-1} dx.
\end{align}

Summing over $j$ then gives
\begin{align}
\left| \int_{0}^{L} H_p'(x) e^{-2 \pi \mi k x/L} \, dx \right|
&= \left| \sum_{j}  \int_{a_j}^{b_{j}} H_p'(x) e^{-2 \pi \mi k x/L} \, dx \right|
\nonumber \\
&= \left| \sum_{j}  \frac{L}{2 \pi \mi k}
    \int_{a_j}^{b_{j}} H_p''(x) e^{-2 \pi \mi k x/L} \, dx \right|
\nonumber \\
&\le \frac{L}{2 \pi |k|} \sum_{j}
    \left|\int_{a_j}^{b_{j}} H_p''(x) e^{-2 \pi \mi k x/L} \, dx \right|
\nonumber \\
&\le \frac{L}{2 \pi |k|} \sum_{j} 2 p \|G''\|_{L^\infty}\int_{a_j}^{b_{j}} |G(x)|^{p-1} dx
\nonumber \\
&\le \frac{Lp}{\pi |k|} \|G''\|_{L^\infty} \int_{0}^{L} |G(x)|^{p-1}dx.
\end{align}

Finally, using \eqref{eq:5040201}, we have
\begin{align}
\left|\int_{0}^{L} |G(x)|^p \, e^{-2 \pi \mi k x / L} \, dx \right|
&= \left|\int_{0}^{L} H_p(x) e^{-2 \pi \mi k x / L} \, dx \right|
\nonumber \\
&= \frac{L}{2 \pi |k|} \left| \int_{0}^{L} H_p'(x) e^{-2 \pi \mi k x/L} \, dx \right|
\nonumber \\
&\le \frac{L^2 p}{2 \pi^2 k^2} \|G''\|_{L^\infty} \int_{0}^{L} |G(x)|^{p-1}dx.
\end{align}

Taking the limit as $p \to 1^+$
also shows the corresponding bound for $p=1$ as well:
\begin{align}
\left|\int_{0}^{L} |G(x)| e^{-2 \pi \mi k x / L} \, dx \right|
\le \frac{L^3}{2\pi^2 k^2} \|G''\|_{L^\infty},
\end{align}
which completes the proof.

\end{proof}

\begin{cor}
\label{cor:trapezoidal_pnorm}

Suppose $G: \R \to \R$ is $L$-periodic and $C^2$.
Let
\begin{align}
t_j = \frac{j}{n}L, \quad 0 \le j \le n.
\end{align}
Then for any $1 \le p < \infty$,
and all $n \ge 2$,
\begin{align}
\left|\frac{L}{n}\sum_{j=0}^{n-1} |G(t_j)|^p
    - \int_{0}^{L} |G(t)|^p dt  \right|
\le C \frac{L^{2} p}{n^2}  \|G''\|_{L^\infty} \int_{0}^{L} |G(t)|^{p-1} dt,
\end{align}
where $C$ is a universal constant.

\end{cor}

\begin{proof}
This follows from Lemma \ref{lem:fourier_series_approx}
and Lemma \ref{lem:fourier_pth_power}.

\end{proof}

\begin{cor}
\label{cor:trapezoidal_pnorm2}
Suppose $G: \R \to \R$ is $L$-periodic and $C^2$.
Let
\begin{align}
t_j = \frac{j}{n}L, \quad 0 \le j \le n.
\end{align}
Then for any $1 \le p < \infty$,
there is $N_p$ such that for all $n \ge N_p$,
\begin{align}
\left| \left(\frac{L}{n}\sum_{k=0}^{n-1} |G(t_k)|^p \right)^{1/p}
    - \left(\int_{0}^{L} |G(t)|^p dt \right)^{1/p} \right|
\le C \frac{L^{2+1/p} \|G''\|_{L^\infty}}{n^2},
\end{align}
with the obvious modification
when $p = \infty$, where $C$ is a universal constant.
\end{cor}

\begin{proof}
Suppose $G$ is not constantly $0$ (otherwise the result is trivial).
For brevity, let $I = \|G\|_{L^p}^p/2$.
Let
\begin{align}
T_{n,p} = \frac{L}{n} \sum_{k=0}^{n-1} |G(t_k)|^p.
\end{align}
Then for all $n$ sufficiently large,
$T_{n,p} \ge I$.
The function $y \mapsto y^{1/p}$ has derivative $y^{1/p-1} / p$,
which has maximum value $I^{1/p-1}/p$ when $y \ge I$.
Hence, by the mean value theorem, for all $n$ sufficiently large,
\begin{align}
\left| T_{n,p}^{1/p} - \|G\|_{L^p} \right|
&\le \left| T_{n,p} - \|G\|_{L^p}^p \right| (I^{1/p-1}/p)
\nonumber \\
&\le C L^{2+1/p} p\|G\|_{L^p}^{p-1} \|G''\|_{L^\infty} \frac{1}{n^2}
    \frac{(\|G\|_{L^p}^p/2)^{1/p-1} }{p}
\nonumber \\
&\le C \frac{L^{2+1/p} \|G''\|_{L^\infty}}{n^2}.
\end{align}
This is the desired bound for $1 \le p < \infty$.

When $p=\infty$, suppose again that $G$ is not constantly zero, since
otherwise the result is trivial.
Let $x^*$ satisfy $\|G\|_{L^\infty} = |G(x^*)|$.
If $x^* = 0$, then
then, since $t_0 = 0$,
$\max_{0 \le k \le n-1}|G(t_k)| = |G(0)| = \|G\|_{L^\infty}$.
(If $x^* = L$, then since $G$ is $L$-periodic,
we can also take $x^*=0$.)
Assume, then, that $x^*$ lies in the interior of $(0,L)$.
Then $G'(x^*) = 0$, and so
a second-order Taylor expansion gives
\begin{align}
|G(x) - G(x^*)| \le C\|G''\|_{L^\infty} |x - x^*|^2,
\end{align}
where $C > 0$ is universal.
Consequently, if $t_{k^*}$ is a grid point
within $L/n$ of $x^*$, we have
\begin{align}
|G(t_{k^*}) - G(x^*)| \le C \frac{L^2}{n^2} \|G''\|_{L^\infty},
\end{align}
and therefore, since $\max_{0 \le j \le n-1}|G(t_j)| \ge |G(t_{k^*})|$,
\begin{align}
\left| \max_{0 \le j \le n-1}|G(t_j)| - \|G\|_{L^\infty} \right|
&= |G(x^*)| - \max_{0 \le j \le n-1}|G(t_j)|
\nonumber \\
&\le |G(x^*)| - |G(t_{k^*})|
\nonumber \\
&\le |G(x^*) - G(t_{k^*})|
\nonumber \\
&\le C\frac{L^2 \|G''\|_{L^\infty}}{n^2},
\end{align}
which is the desired result.

\end{proof}

The following result is standard and may be found (in more general form)
in Chapter V of \cite{zygmund2003trigonometric}:

\begin{lem}
\label{lem:trig_sine}
There is a constant $C > 0$ such that
for any positive integer $m$ and real number $A$,
\begin{align}
\left|\sum_{k=1}^{m} \frac{\sin(k A)}{k} \right| \le C.
\end{align}
\end{lem}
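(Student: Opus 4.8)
The plan is to prove the uniform bound by the classical Abel-summation argument, after first reducing to a convenient range of $A$ and then splitting the sum at a threshold that depends on $A$. Since each term $\sin(kA)/k$ is $2\pi$-periodic and odd in $A$, it suffices to treat $0 < A \le \pi$ (the case $A=0$ giving $0$). The central tool is the Dirichlet-kernel estimate: writing $D_j = \sum_{k=1}^{j}\sin(kA)$, the geometric-series evaluation of $\sum_{k=1}^{j} e^{\mi kA}$ yields $|D_j| \le 1/|\sin(A/2)|$, and Jordan's inequality $\sin(A/2)\ge A/\pi$ on $(0,\pi]$ then gives the clean bound $|D_j|\le \pi/A$, uniform in $j$.

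Next I would split the sum at $N=\lfloor \pi/A\rfloor$. For the low-frequency part $1\le k\le N$, the elementary estimate $|\sin(kA)|\le kA$ gives $\sum_{k=1}^N |\sin(kA)|/k \le NA \le \pi$. For the high-frequency part $N<k\le m$ (present only when $m>N$), I would apply summation by parts with $a_k=\sin(kA)$ and $b_k=1/k$: setting $B_k=\sum_{j=N+1}^k \sin(jA) = D_k - D_N$, so that $|B_k|\le 2\pi/A$, the Abel identity
\[
\sum_{k=N+1}^m \frac{\sin(kA)}{k} = \frac{B_m}{m} + \sum_{k=N+1}^{m-1} B_k\Bigl(\tfrac1k-\tfrac1{k+1}\Bigr)
\]
has a telescoping structure. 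Using $|B_k|\le 2\pi/A$ together with the positivity of $\tfrac1k-\tfrac1{k+1}$, the right-hand side is bounded by $\tfrac{2\pi}{A}\bigl[\tfrac1m + \bigl(\tfrac{1}{N+1}-\tfrac1m\bigr)\bigr] = \tfrac{2\pi}{A}\cdot\tfrac{1}{N+1}$. Finally, since $N+1 > \pi/A$, this is at most $2$. Combining the two parts gives the lemma with $C = \pi + 2$.

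The only delicate point, and the step I expect to be the real obstacle, is the non-uniformity as $A\to 0$: the Dirichlet bound $\pi/A$ diverges, so one cannot simply feed it into Abel summation over the full range $1\le k\le m$. Choosing the split at $N\approx \pi/A$ is exactly what balances the two estimates — in the low part each of the $N$ terms is $O(A)$, while in the high part the telescoped factor $1/(N+1)$ precisely cancels the $1/A$ growth of the kernel bound. This cancellation is the crux of the argument, and the remaining manipulations are routine.
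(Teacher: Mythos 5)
Your proof is correct, and it is worth noting that the paper itself does not prove this lemma at all: it simply cites Chapter V of Zygmund's \emph{Trigonometric Series}, where the statement appears in more general form. Your argument is the classical one underlying that reference — reduce to $0 < A \le \pi$ by $2\pi$-periodicity and oddness, split at $N = \lfloor \pi/A \rfloor$, bound the low-frequency part termwise via $|\sin(kA)| \le kA$ (giving $NA \le \pi$), and handle the tail by Abel summation against the Dirichlet-kernel bound $|D_j| \le 1/\sin(A/2) \le \pi/A$ (the last step by Jordan's inequality), so that the telescoped factor $1/(N+1) < A/\pi$ cancels the $\pi/A$ growth. All the details check out: the Abel identity with $B_N = 0$ is stated correctly, the telescoping bound $\frac{2\pi}{A}\cdot\frac{1}{N+1} \le 2$ is right, and the degenerate cases ($A = 0$, $m \le N$, empty middle sum when $m = N+1$) cause no trouble. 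You correctly identify the crux — the uniformity as $A \to 0$, resolved by the $A$-dependent split — and you obtain the explicit (non-sharp) constant $C = \pi + 2$, whereas the sharp constant is the Gibbs-type value $\int_0^\pi \frac{\sin t}{t}\,dt$. What your approach buys over the paper's is self-containedness: a reader of the paper must chase the Zygmund reference, while your two-regime argument is complete, elementary, and quantitative.
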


\subsection{Probability and concentration}

We start with the following well-known corollary of the Borel-Cantelli Lemma
(see e.g.\ Chapter 2, Section 10 of \cite{shiryaev2015probability}):

\begin{lem}
\label{lem:borel_cantelli}
Let $R_1,R_2,\dots$ be a sequence of random numbers.
Suppose that for all $\epsilon > 0$,
\begin{align}
\sum_{n=1}^{\infty} \Prob\{R_n > \epsilon\} < \infty.
\end{align}
Then $R_n \to 0$ almost surely.
\end{lem}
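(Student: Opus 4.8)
The plan is to invoke the first Borel--Cantelli lemma and then combine countably many null events. Since the hypothesis controls $\Prob\{R_n > \epsilon\}$ only for positive $\epsilon$, I will use that the $R_n$ are nonnegative (as they are in every application in this paper, where $R_n = \mathrm{V}_p(Z_n) \ge 0$), so that $R_n \to 0$ is equivalent to $\limsup_n R_n \le 0$.

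First, I would fix $\epsilon > 0$ and set $A_n = \{R_n > \epsilon\}$. The hypothesis gives $\sum_n \Prob(A_n) < \infty$, so the first Borel--Cantelli lemma yields $\Prob(\limsup_n A_n) = 0$; that is, with probability one the event $\{R_n > \epsilon\}$ occurs for only finitely many $n$. On the complementary probability-one event, $R_n \le \epsilon$ for all sufficiently large $n$, and hence $\limsup_n R_n \le \epsilon$ there.

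Next, I would run this argument along the countable sequence $\epsilon_k = 1/k$, $k \ge 1$. For each $k$ there is a null set $N_k$ off which $\limsup_n R_n \le 1/k$. Because a countable union of null sets is null, $N = \bigcup_{k} N_k$ has probability zero, and on its complement $\limsup_n R_n \le 1/k$ holds simultaneously for every $k$, whence $\limsup_n R_n \le 0$. Combined with $R_n \ge 0$, which gives $\liminf_n R_n \ge 0$, this forces $R_n \to 0$ on $N^c$, i.e.\ almost surely.

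The argument is entirely standard, and there is no real analytic obstacle. The only point requiring care is the passage from ``for each fixed $\epsilon$'' to ``simultaneously for all $\epsilon$'': one cannot directly intersect the uncountably many full-measure events indexed by $\epsilon > 0$, so the essential maneuver is to restrict to the countable collection $\{1/k : k \ge 1\}$ and then exploit that $\limsup_n R_n \le 1/k$ for every $k$ yields the conclusion for all $\epsilon$ by monotonicity.
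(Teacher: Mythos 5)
Your proof is correct and is precisely the standard Borel--Cantelli argument that the paper itself does not spell out, instead citing Shiryaev for this well-known corollary: first Borel--Cantelli for each fixed $\epsilon$, then a countable sequence $\epsilon_k = 1/k$ to avoid intersecting uncountably many full-measure events. Your side remark is also well taken: as literally stated (with $\Prob\{R_n > \epsilon\}$ rather than $\Prob\{|R_n| > \epsilon\}$) the lemma is false for signed sequences (take $R_n \equiv -1$), and your patch via nonnegativity --- which holds in every application in the paper, since $R_n = \mathrm{V}_p(Z_n) \ge 0$ or $\mathrm{SV}_p(Z_n) \ge 0$ --- is exactly the right repair.
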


\subsubsection{Properties of sub-Gaussian random variables}

\begin{prop}
\label{prop:subgaussian_pnorm}
If $R$ is a sub-Gaussian random variable,
then for any $p \ge 1$,
\begin{align}
\left(\EE[|R|^p] \right)^{1/p} \le C \sqrt{p} \|R\|_{\psi_2},
\end{align}
where $C>0$ is a universal constant.
\end{prop}

\begin{prop}
\label{prop:subgaussian_mgf}
If $R$ is a sub-Gaussian random variable
with mean zero,
then for any $s \in \R$,
\begin{align}
\EE[e^{s R}] \le e^{C s^2 \|R\|_{\psi_2}^2},
\end{align}
where $C > 0$ is a universal constant.
\end{prop}

The proofs of Proposition \ref{prop:subgaussian_pnorm}
and Proposition \ref{prop:subgaussian_mgf}
may be found in \cite{vershynin2025high}.
Next, we prove a standard bound on the expected value
of the maximum of sub-Gaussians:

\begin{lem}
\label{lem:expected_maximum_subgaussians}
Let $R[0],R[1],\dots,R[n-1]$ be mean zero sub-Gaussian random variables,
with sub-Gaussian norms $\|R[j]\|_{\psi_2} \le \tau$, $0 \le j \le n-1$,
and let $R = (R[0],\dots,R[n-1])$.
Then
\begin{align}
\EE[\|R\|_\infty] \le C \tau \sqrt{\log(n)},
\end{align}
where $C > 0$ is a universal constant.
\end{lem}

\begin{proof}
First, extend $R$ by defining $R[j] = -R[j-n]$
for  $n \le j \le 2n-1$. Then
\begin{align}
\|R\|_{\infty}
&= \max\{R[0],\dots,R[n-1],-R[0],\dots,-R[n-1]\}
\nonumber \\
&= \max\{R[0],\dots,R[n-1],R[n],\dots,R[2n-1]\}.
\end{align}
Note that for all $j$, since $R[j]$ is a mean-zero
sub-Gaussian with $\|R[j]\|_{\psi_2} \le \tau$,
by Proposition \ref{prop:subgaussian_mgf}
the moment generating functions satisfy the bound
\begin{align}
\EE[e^{s R[j]}] \le e^{C \tau^2 s^2}.
\end{align}
Using this bound,
for any $s > 0$, we have
\begin{align}
e^{\EE[s \|R\|_\infty]}
&\le \EE\left[ e^{s \|R\|_\infty}\right]
\nonumber \\
&= \EE\left[ \max_{0 \le j \le 2n-1}  e^{s R[j]}\right]
\nonumber \\
&\le \EE\left[ \sum_{j=0}^{2n-1}  e^{s R[j]}\right]
\nonumber \\
&= \sum_{j=0}^{2n-1}   \EE\left[ e^{s R[j]}\right]
\nonumber \\
&\le 2n  \max_{0 \le j \le 2n-1} \EE\left[ e^{s R[j]}\right]
\nonumber \\
&\le 2 n  e^{C \tau^2 s^2}
\end{align}
and so, taking the log of each side gives
\begin{align}
\EE\left[ \|R\|_\infty \right]
\le C \left(\frac{\log(n)}{s} + s \tau^2 \right),
\end{align}
and taking $s =  \sqrt{\log(n)} / \tau$ gives the bound
\begin{align}
\EE\left[ \|R\|_\infty \right] \le C \tau \sqrt{\log(n)},
\end{align}
as desired.

\end{proof}

The next result bounds the sub-Gaussian norm
of the maximum of sub-Gaussians; see
Proposition 2.7.6 in \cite{vershynin2025high}.

\begin{prop}
\label{prop:norm_maximum_subgaussians}
If $R_1,\dots,R_m$
are sub-Gaussian random variables, then
\begin{align}
\left\|\max_{1 \le i \le m} R_i \right\|_{\psi_2}
\le C \sqrt{\log(m)} \max_{1 \le i \le m} \|R_i\|_{\psi_2},
\end{align}
where $C > 0$ is a universal constant.

\end{prop}

The next result appears as 
Proposition 2.7.1 in \cite{vershynin2025high}.

\begin{prop}
\label{prop:subgaussian_pythagorean}
There is a universal constant $C>0$ such that if
$R[0],\dots,R[n-1]$ are independent sub-Gaussian random variables,
\begin{align}
\left\| \sum_{j=0}^{n-1} R[j]\right\|_{\psi_2}^2
\le C \sum_{j=0}^{n-1}  \left\| R[j]\right\|_{\psi_2}^2.
\end{align}
\end{prop}

\subsubsection{Sub-Weibull random variables}
\label{sec:subweibull}

For $\alpha > 0$, define the function $\psi_\alpha(x) = \exp(x^\alpha) - 1$
on $[0,\infty)$. Note that $\psi_\alpha$ is increasing, and $\psi_\alpha(0) = 0$.
Define
\begin{align}
\|X\|_{\psi_\alpha} = \inf\{t > 0 \, : \, \EE[\psi_\alpha(|X|/t)] \le 1\}.
\end{align}
Random variables $X$ with $\|X\|_{\psi_\alpha} < \infty$
are called \emph{sub-Weibull($\alpha$)};
see \cite{kuchibhotla2022moving}.
Note that $\|X\|_{\psi_\alpha}$ is a norm when $\alpha \ge 1$, since $\psi_\alpha$
is convex. When $0 < \alpha < 1$, $\|X\|_{\psi_\alpha}$
does not satisfy $\|X+Y\|_{\psi_\alpha} \le \|X\|_{\psi_\alpha} + \|Y\|_{\psi_\alpha}$,
but the following still holds:
\begin{prop}
\label{prop:subweibull_pseudonorm}
Let $\alpha > 0$. Then there is a value $C_\alpha > 0$
such that for all sub-Weibull($\alpha$) random variables
$X_1,\dots,X_n$,
\begin{align}
\|X_1 + \dots + X_n\|_{\psi_\alpha}
\le C_\alpha(\|X_1\|_{\psi_\alpha} + \dots + \|X_n\|_{\psi_\alpha}),
\end{align}
\end{prop}
Proposition \ref{prop:subweibull_pseudonorm}
is easily shown by adapting the proof of Proposition 2.6.1
in \cite{vershynin2025high}.
The following result is immediate from the definition of
sub-Weibull:

\begin{lem}
\label{lem:weibull_subgaussian}
Let $p \ge 1$.
A random variable $R$ is sub-Gaussian
if and only if $X = |R|^p$ is sub-Weibull($2/p$),
in which case $\|X\|_{\psi_{2/p}} = \|R\|_{\psi_2}^{p}$.
\end{lem}

\section{Conclusion}
\label{section:conclusion}

This paper has proven a number of properties of sliced Cram\'er distances.
We have characterized their growth under deformations
of the inputs, proven stability to convolutions,
provided bounds on the discretization errors, and shown robustness
to heteroscedastic sub-Gaussian noise.

Looking forward, there are several questions
that follow from the present work.
First, at the core of the sliced Cram\'er distances are, of course,
the 1D Cram\'er distances. These are quite simple objects,
defined by applying a smoothing filter to the input functions and
then evaluating the ordinary Lebesgue distance.
It is therefore of interest to consider metrics
based on other families of smoothing filters,
and to characterize their behavior under deformations
and robustness to noise.
Such metrics are abundant in applications
in both Euclidean and non-Euclidean settings,
and, unlike the sliced Cram\'er distances,
often involve filtering/smoothing the inputs at multiple scales
\cite{mishne2016hierarchical,
mishne2017datadriven,
leeb2016holder, leeb2018mixed, jacobs2008approximate, mishne2019comanifold}.
In future work, we plan to
study such metrics' growth under deformations
and their stability to noise.

Second, while this paper has shown
theoretical properties of the sliced Cram\'er
distances, it is of interest to explore their behavior
in scientific applications,
such as clustering images or volumes.
For instance,
Corollary \ref{cor:main_projections}
suggests that they may be well-suited for analyzing data from cryo-electron
microscopy (cryo-EM), in which one observes two-variable projections
of a three-variable volume (a molecule),
at unknown viewing directions, from which the volume
is to be determined \cite{singer2020computational, bendory2020single,
doerr2016single}.
It would be of interest to see whether sliced Cram\'er metrics
perform well relative to other metrics that have been proposed
for image clustering and parameterizing volumes in cryo-EM,
such as Wasserstein-type metrics
\cite{singer2024alignment, rao2020wasserstein, zelesko2020earthmover, shi2025fast}.
One such application
is heterogeneity analysis
\cite{zhong2021cryodrgn, frank2016continuous, liao2015efficient,
aizenbud2019maxcut, lederman2020hyper, scheres2016processing, toader2023methods,
lederman2020representation},
for which the use of Wasserstein distances has been proposed \cite{rao2020wasserstein}.

Finally, in certain applications one seeks metrics
that are not only robust
to all deformations, but invariant to a specific class of deformations,
such as rotations and/or translations
\cite{shi2025fast, rao2020wasserstein, zhao2014rotationally}.
In principle, any metric can be made invariant to a specified set of
deformations by simply minimizing the distance over the class of deformations.
In some cases of practical interest, such as
rigid alignment, this minimization can be done
with only small extra computational cost \cite{shi2025fast, rangan2020factorization}.
In the context of the present work, this raises  interesting questions.
First, it is known that in the presence of noise, alignment accuracy
deterioriates \cite{robinson2004fundamental, aguerrebere2016fundamental};
studying how invariant distances behave under noise, or more generally
in any setting where alignment cannot be done to high precision, is therefore
of interest.
Second, it is natural to ask how introducing invariance
to one class of deformations, such as rotations, impacts robustness
to other deformations. Questions along these lines will be
pursued in future work.

\subsection*{Acknowledgements}

I thank Amit Singer for feedback on an earlier version of
this work, and Liane Xu for pointing out useful references
and the connection between sliced $2$-Cram\'er
metrics and energy distances.
I acknowledge support from NSF CAREER award DMS-2238821
and the McKnight Foundation.

\bibliographystyle{plain}
\bibliography{refs_cramer}

\end{document}